\declaretheorem[name=Theorem, numberwithin=section]{theorem}
\declaretheorem[name=Lemma, sibling=theorem]{lemma}
\declaretheorem[name=Proposition, sibling=theorem]{proposition}
\declaretheorem[name=Remark, style=remark, sibling=theorem]{remark}
\def\cqedsymbol{\ifmmode$\lrcorner$\else{\unskip\nobreak\hfil
\penalty50\hskip1em\null\nobreak\hfil$\lrcorner$
\parfillskip=0pt\finalhyphendemerits=0\endgraf}\fi} 
\let\le\leqslant
\let\ge\geqslant
\let\leq\leqslant
\let\geq\geqslant
\def\@setthanks{\vspace{-\baselineskip}\def\thanks##1{\@par\noindent##1\@addpunct.}\thankses}
\def\@setaddresses{%
  \par\nobreak\begingroup\footnotesize\interlinepenalty\@M\bigskip
  \def\author##1{}%
  \def\address##1##2{%
    \par\addvspace\medskipamount\noindent
    \@ifnotempty{##1}{\textit{##1}: }{\ignorespaces##2}%
  }%
  \def\email##1##2{\ifvmode\else\unskip; \fi\textit{email}: ##2}%
  \addresses\par
  \endgroup
}
\title{String graphs are quasi-isometric to planar graphs}
\author{James Davies\thanks{
The author was supported by the Alexander von Humboldt Foundation in the framework of the Alexander von Humboldt Professorship of Daniel Kráľ endowed by the Federal Ministry of Education and Research.}}
\affil{Leipzig University, Germany}
\date{}
\begin{document}

\maketitle

\begin{abstract}
    We prove that for every countable string graph $S$,
    there is a planar graph $G$ with $V(G)=V(S)$ such that 
    \[
    \frac{1}{23660800}d_S(u,v)
    \le
    d_G(u,v)
    \le
    162
    d_S(u,v)
    \]
    for all $u,v\in V(S)$, where $d_S(u,v)$, $d_G(u,v)$ denotes the distance between $u$ and $v$ in $S$ and $G$ respectively.
    In other words, string graphs are quasi-isometric to planar graphs.

    This theorem lifts a number of theorems from planar graphs to string graphs, we give some examples.
    String graphs have Assouad-Nagata (and asymptotic dimension) at most~2.
    Connected, locally finite, quasi-transitive string graphs are accessible.
    A finitely generated group $\Gamma$ is virtually a free product of free and surface groups if and only if $\Gamma$ is quasi-isometric to a string graph.
    
    Two further corollaries are that countable planar metric graphs and complete Riemannian planes are also quasi-isometric to planar graphs, which answers a question of Georgakopoulos and Papasoglu.
    For finite string graphs and planar metric graphs, our proofs yield polynomial time (for string graphs, this is in terms of the size of a representation given in the input) algorithms for generating such quasi-isometric planar graphs.

    We further extend our techniques to show that every complete Riemannian surfaces $\Sigma$ of bounded Euler genus has a triangulation $G\subset \Sigma$ such that $G^{(1)} \hookrightarrow \Sigma$ is a quasi-isometry, where $G^{(1)}$ is the simplicial 1-skeleton of $G$.
\end{abstract}

\section{Introduction}\label{sec:intro}

A \emph{string graph} is an intersection graph of a collection $\mathcal{S}$ of path-connected subsets of the plane, where the vertices are $\mathcal{S}$, and $S_1,S_2\in \mathcal{S}$ are adjacent if they intersect.
String graphs were first formally defined by Ehrlich, Even, and Tarjan \cite{ehrlich1976intersection} but were present in earlier works of Benzer~\cite{benzer1959topology} on the topology of genetic structures and of
Sinden \cite{sinden1966topology} on electrical networks realizable by printed circuit.
String graphs are the most general class of intersection graphs in the plane and significantly generalize planar graphs, which by the Koebe-Andreev-Thurston circle packing theorem \cite{Koebe36} can be viewed as intersection graphs of discs in the plane with disjoint interiors.
String graphs can be thought of as a dense or induced minor analogue of planar graphs.

There are several important structural results for string graphs.
For instance, Pach and T{\'o}th \cite{pach2002recognizing} proved that recognizing string graphs is decidable.
Fox and Pach \cite{fox2008separator,fox2010separator,fox2014applications} (see also \cite{matouvsek2014near}) proved a separator theorem for string graphs.
Every incomparability graph is a string graph and Fox and Pach \cite{fox2012string} proved a partial converse of this for dense string graphs.
This was used by Tomon \cite{tomon2023string} to prove that string graphs have the Erdős-Hajanl property.
Very recently, Abrishami, Briański, Davies, Du, Masaříková, Rzążewski, and Walczak \cite{ABRISHAMI2025string} classified string graphs with large chromatic number.

Every planar graph is a string graph, although the structure of string graphs is much more complicated.
For instance, while planar graphs can be recognized in linear time \cite{hopcroft1974efficient}, recognition of string graphs is NP-hard \cite{kratochvil1991string,schaefer2002recognizing}.
There are at most $c^nn!$ labelled planar graphs on $n$ vertices~\cite{denise1996random,norine2006proper}, while the number of labelled string graphs on $n$ vertices is $2^{(\frac{3}{4}+o(1)){n \choose 2}}$ \cite{pach2006many}.
Every planar graph is 4-colourable \cite{appel1977every,robertson1997four}, however there are triangle-free string graphs with arbitrarily large chromatic number \cite{pawlik2014triangle}.

We prove a partial converse to the fact that planar graphs are string graphs.
In a coarse sense, every string graph is equivalent to some planar graph.
For vertices $u,v$ of a graph $G$, we let $d_G(u,v)$ denote the distance between $u$ and $v$ in $G$.

\begin{theorem}\label{thm:stringmain}
    Let $S$ be a countable
    string graph.
    \footnote{We remark that \cref{thm:stringmain} does not hold for uncountable string graphs as can be seen by Wagner's \cite{wagner1967fastplattbare} characterization of uncountable planar graphs and the fact that the graph obtained from $|\mathbb{R}|$ 1-ended infinite paths by identifying their ends and then for each vertex adding $|\mathbb{R}|$ new adjacent vertices of degree 1 is a string graph.}
    Then there is a planar graph $G$ with $V(G)=V(S)$ such that 
    \[
    \frac{1}{23660800}d_S(u,v)
    \le
    d_G(u,v)
    \le
    162
    d_S(u,v)
    \]
    for all $u,v\in V(S)$.
\end{theorem}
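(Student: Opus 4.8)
Let me think about how to prove this.

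We have a countable string graph $S$, represented by a collection of path-connected sets (curves/regions) in the plane. We need to build a planar graph $G$ on the same vertex set, with distances distorted by bounded multiplicative constants.

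The upper bound $d_G(u,v) \le 162 d_S(u,v)$ means: if $u,v$ are adjacent in $S$ (distance 1), they should be at distance $\le 162$ in $G$. More generally paths in $S$ should be "traceable" in $G$ with bounded blowup. The lower bound $\frac{1}{23660800} d_S(u,v) \le d_G(u,v)$ means: $G$ can't create shortcuts — a short path in $G$ yields a not-too-much-longer walk in $S$. Since $G$ is planar, this lower bound is the real content; it says planar graphs with the same vertex set can't "collapse" string-graph distance too much.

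Key ideas:

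1. **Reduce to a bounded-degree / clean setting.** String graphs can have huge cliques (a point where many strings pass). First I'd want to replace $S$ by something more structured. A natural move: take the "curve arrangement" — put each string in general position, so two strings cross in finitely many points. Actually the cleanest approach: it's known (Fox–Pach / Schaefer–Štefankovič) that every string graph has a representation where every two strings cross at most some bounded number of times, or we can at least assume strings are in general position. Then consider the planar graph $P$ formed by the arrangement of curves: vertices are crossing points, edges are curve-segments between consecutive crossings. This $P$ is planar by construction.

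2. **Embed $S$'s distance into $P$'s distance.** Each vertex $v$ of $S$ is a string $\sigma_v$; pick a representative crossing-point $p_v$ on $\sigma_v$ (or a point near it). If $u \sim v$ in $S$, then $\sigma_u$ and $\sigma_v$ share a point $x$; walking along $\sigma_u$ from $p_u$ to $x$ then along $\sigma_v$ from $x$ to $p_v$ is a walk in $P$ — but possibly very long (many crossings along a single string). So naively $d_P(p_u, p_v)$ is not bounded. **This is the main obstacle.** We need to contract/shortcut within each string without destroying planarity or the lower bound.

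3. **The trick — "thicken" strings into planar gadgets / use a BFS-layering + region decomposition.** I expect the actual argument uses a more clever construction than the raw arrangement. Likely: do a BFS from an arbitrary vertex in $S$, getting layers $L_0, L_1, \dots$. Within the plane, the union of all strings in layers $\le i$ occupies some closed region $R_i$; the key topological fact is that we can route a planar "backbone" tree/graph through the layers so that: (a) moving between consecutive layers costs $O(1)$, matching the fact that adjacency in $S$ = crossing in the plane, and (b) the planar structure of the arrangement prevents distances in layer $i$ from being much smaller than in $S$. The outer face / boundary structure of the regions $R_i$ is where planarity bites. I would build $G$ as (a bounded-blowup subdivision/minor of) a planar graph obtained by, for each "cell" of the arrangement of strings, adding a vertex and connecting cells sharing a boundary arc, then connecting each original vertex $v$ to a constant number of cells incident to $\sigma_v$, and finally — crucially — using a planar spanning structure of each string that has bounded "detour", which one gets from the fact that a curve crossing the plane, together with the planar cell-adjacency graph, has bounded hop-distance along itself relative to the surrounding planar graph. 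The explicit constants $162$ and $23660800$ strongly suggest a chain of lemmas each contributing a small multiplicative loss (e.g. $162 \approx 2 \cdot 81$, and $23660800$ is a product of several factors from nested reductions).

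4. **Handle the countable / infinite case.** For infinite $S$, do everything locally: the construction of $G$ should be "finitary" in that each vertex's neighbourhood in $G$ depends only on a bounded-radius ball in $S$, so taking a suitable limit / direct construction works; the footnote's counterexample shows why uncountability breaks this (uncountably many ends forced to be identified can't embed in a planar graph).

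**Plan of attack, ordered:**

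\begin{enumerate}
\item Normalize the string representation: curves in general position, finitely many crossings between any two (cite/use a standard lemma), and reduce to connected $S$ (handle components separately — a planar graph is a disjoint union of planar graphs).
\item Build the planar arrangement graph $A$ of the curves and the planar "cell-adjacency" graph $C$ of the faces of $A$; note both are planar, and $C$ together with incidence to curves is planar too (standard).
\item Prove the upper bound: for $u \sim v$ in $S$, exhibit a path of length $\le$ const in $G$ via shared crossing/cell. This requires the shortcutting step — show each curve $\sigma_v$ can be given a bounded-detour spanning path in the relevant planar graph; this is where a clever geometric/topological argument (e.g. using that a Jordan arc's crossing pattern with a planar graph has a "visibility"-type bounded-stretch substructure) is needed.
\item Prove the lower bound: given a short path in $G$, lift it to a walk in $S$ of length at most $23660800$ times longer, using that each $G$-edge corresponds to a bounded-length $S$-walk (again via the cell/curve correspondence) and that planarity was only used to get planarity of $G$, not in this direction — so this direction is comparatively routine once the correspondence is set up.
\item Assemble constants; address infinite/countable case by a compactness or direct-limit argument; note algorithmic consequences follow since every step is constructive and polynomial in the representation size.
\end{enumerate}

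I expect step 3 — constructing $G$ so that adjacency in $S$ becomes bounded distance in a planar $G$ — to be the crux, because it is exactly here that one must "flatten" a string graph (which can be a dense minor of planar graphs) down to a genuine planar graph while only paying a constant; the separator theorem for string graphs and/or a careful outerplanarity/BFS-layering argument on the curve arrangement is the likely engine.
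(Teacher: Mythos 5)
You correctly identify several structural features of the proof: the region-intersection / arrangement viewpoint (the paper works throughout with $\textbf{RIG}(G,\mathcal{H})$ where $G$ is planar, via Lee's theorem), the reduction to the finite case followed by a limit argument (the paper uses K\H{o}nig's infinity lemma to pass from \cref{thm:stringmainfinite} to \cref{thm:stringmain}, essentially your ``compactness'' step), and a BFS-layering idea, which does appear in \cref{lem:outerplanar} for the outerplanar case. You also correctly identify the crux: a single string may traverse many cells of the arrangement, so the raw arrangement/cell-adjacency graph has distances wildly incomparable to $d_S$, and one must ``shortcut within strings'' without ruining planarity or the lower bound.

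The genuine gap is that you do not solve that crux, and the placeholder you offer would not work as stated. A ``bounded-detour spanning path'' along each curve, or a BFS-layered backbone of $S$, does not give the needed control: strings can belong to (and wind through) regions corresponding to many BFS layers at once, and a single string can meet arbitrarily many others (unbounded clique number), so there is no local gadget that flattens one string at bounded cost while respecting all its neighbours simultaneously. The paper's resolution is not a layering of $S$ at all but a recursive \emph{peeling} of the planar host $G$: it constructs an ``encasing'' $\mathcal{B}$ of the outer boundary of $G$ which simultaneously (i) has bounded weak diameter in $\textbf{RIG}(G,\mathcal{H})$, (ii) ``surrounds'' so that strings crossing it only see nearby strings, and (iii) ``cages'' each string so that its intersection with the boundary region can be broken into $O(1)$ pieces that are close in the eventual planar graph. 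After deleting $\bigcup\mathcal{B}$ one recurses on the interior by induction on $|V(G)|$, gluing impressions together, with ``fortifications'' to reconnect the pieces of strings that got cut. This is the content of \cref{sec:outer,sec:encase,sec:string} and is not subsumed by a BFS argument. You also underestimate the lower-bound direction: in the paper's impression framework both inequalities come from the two parameters $(x,y)$ of an impression (\cref{prop:impression}), and maintaining the $x$-parameter — that each planar piece only meets strings within bounded $S$-distance — through the recursion (the ``$4$-surround'' and ``encroachment'' analysis) is as delicate as the upper bound, not routine.
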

In other words, string graphs are quasi-isometric (or even bi-Lipschitz equivalent) to planar graphs.
For finite string graphs the proof of \cref{thm:stringmain} yields a polynomial time (with respect to the size of a given representation of $S$ in the input) algorithm for generating such a planar graph $G$.
Unfortunately, there are string graphs that require exponentially sized representations~\cite{kratochvil1991stringexp}.
However, natural subclasses such as segment intersection graphs or more generally intersection graphs of convex compact path-connected subsets of $\mathbb{R}^2$, do have polynomially sized representations.
We remark that if the string graph $S$ is locally finite, then the resulting planar graph $G$ is also clearly locally finite.

\cref{thm:stringmain} has numerous applications.
We discuss some direct corollaries for string graphs.
Afterwards, we shall discuss further quasi-isometry results on metric planar graphs and complete Riemannian surfaces before giving a couple more applications.
\\

Asymptotic dimension was introduced by Gromov \cite{GroAsyInv} for metric spaces and groups (via their Cayley graphs).
This notion has had significant impact in geometric group theory, for instance Yu \cite{yu1998novikov} proved that finitely generated groups with finite asymptotic dimension satisfy the Novikov conjecture.
For a survey of asymptotic dimension from this point of view, see~\cite{bell2008asymptotic}.
In coarse graph theory, asymptotic dimension can be thought of as a coarse analogue of graph colouring.
There has recently been significant progress on the study of asymptotic dimension in the intersection of structural graph theory and geometric group theory~\cite{ABRISHAMI2025string,bernshteyn2025large,bonamy2023asymptotic,davies2025strong,distel2023proper,dvovrak2025asymptotic,liu2023assouad}. There are also applications of asymptotic dimension to graph colouring and $\chi$-boundedness, for instance hereditary classes of graphs with bounded asymptotic dimension are either $\chi$-bounded or contain all Burling graphs \cite{ABRISHAMI2025string}.

One of the most natural classes of graphs to consider is of course planar graphs.
Ostrovskii and Rosenthal \cite{ostrovskii2015metric} proved that planar graphs have bounded asymptotic dimension.
Fujiwara and Papasoglu \cite{fujiwara2021asymptotic} improved this bound to 3 and
J{\o}rgensen and Lang \cite{jorgensen2022geodesic} proved that planar graphs have asymptotic dimension and furthermore Assouad–Nagata dimension at most 2, which is tight due to the infinite planar grid.
Asymptotic dimension (and Assouad–Nagata dimension in the case of graphs) are persevered by quasi-isometries.
Thus, by \cref{thm:stringmain}, we obtain the following.

\begin{theorem}\label{col:asdim}
    String graphs have Assouad–Nagata dimension at most 2.
\end{theorem}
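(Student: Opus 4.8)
The plan is to obtain \cref{col:asdim} directly from \cref{thm:stringmain} together with the two facts recalled just above its statement: that planar graphs have Assouad--Nagata dimension at most~$2$ (J{\o}rgensen--Lang), and that Assouad--Nagata dimension is preserved under quasi-isometries (indeed under bi-Lipschitz equivalences). So the corollary is essentially formal; the only point that genuinely requires thought is that \cref{thm:stringmain} is stated for countable string graphs, and I indicate at the end how to lift the conclusion to all string graphs.

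So suppose first that $S$ is a countable string graph, and let $G$ be the planar graph with $V(G)=V(S)$ provided by \cref{thm:stringmain}, so that
\[
\frac{1}{23660800}\,d_S(u,v)\le d_G(u,v)\le 162\,d_S(u,v)\qquad\text{for all }u,v\in V(S).
\]
This says precisely that the identity map $V(S)\to V(G)$ is a bi-Lipschitz bijection between the metric spaces $(V(S),d_S)$ and $(V(G),d_G)$, and in particular a quasi-isometry. Since $G$ is planar its Assouad--Nagata dimension is at most~$2$, and since Assouad--Nagata dimension is a quasi-isometry invariant, the Assouad--Nagata dimension of $S$ is at most~$2$ as well.

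To reach arbitrary string graphs one observes that this argument produces a \emph{uniform} bound: the constants in \cref{thm:stringmain} are absolute, and J{\o}rgensen--Lang's theorem comes with a single linear control function valid for the whole class of planar graphs, so one fixed control function $c_0$ witnesses ``Assouad--Nagata dimension $\le 2$'' simultaneously for all countable string graphs. Given an arbitrary string graph $S$, every countable set of vertices lies in $V(S')$ for some countable induced subgraph $S'$ that is isometrically embedded in $S$ (close the set under one chosen shortest $S$-path per pair of its vertices and iterate $\omega$ times); such an $S'$ is again a string graph, hence has Assouad--Nagata dimension $\le 2$ with control $c_0$, and a directed-union (colimit) theorem for Assouad--Nagata dimension applied to the directed family of all such $S'$ yields the bound for $S$. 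I expect this last step to be the only real obstacle: one must confirm that the control functions in play are genuinely uniform and cite or prove a suitable union theorem for Assouad--Nagata dimension over isometrically embedded subspaces. If instead \cref{col:asdim} is read as a statement about countable string graphs --- the natural reading in view of the hypothesis of \cref{thm:stringmain} --- then the second paragraph above is already a complete proof.
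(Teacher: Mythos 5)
Your second paragraph is exactly the paper's argument: the corollary is obtained by combining \cref{thm:stringmain} with J{\o}rgensen--Lang's bound on the Assouad--Nagata dimension of planar graphs and the fact that Assouad--Nagata dimension is a quasi-isometry invariant, so for countable string graphs your proof and the paper's are identical. Your third paragraph raises a legitimate subtlety that the paper does not address: \cref{thm:stringmain} is stated only for countable string graphs (and the footnote explains that it genuinely fails for uncountable ones), while \cref{col:asdim} is stated for all string graphs. The paper treats \cref{col:asdim} as an immediate consequence, leaving the uncountable case implicit. Your sketch of how to lift the bound --- note that the control function is absolute, reduce to countable isometrically embedded subgraphs, and invoke a directed-union/compactness principle for Assouad--Nagata dimension with uniform control --- is the standard way to fill this in, and the uniformity observation you flag as the crux is indeed the point one must verify (it does hold here, since both the quasi-isometry constants in \cref{thm:stringmain} and J{\o}rgensen--Lang's linear control are absolute). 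So: same route as the paper, plus a correct flag of an unstated step whose resolution you outline accurately.
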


\cref{col:asdim} settles the 2-dimensional case of a conjecture of Davies, Georgakopoulos, Hatzel, and McCarty \cite{davies2025strong} that sphere intersection graphs in $\mathbb{R}^d$ have asymptotic dimension at most $d$.
For $d\ge 3$, such graphs have unbounded Assouad–Nagata dimension \cite{davies2025strong}.

Bonamy, Bousquet, Esperet, Groenland, Liu, Pirot, and Scott \cite{bonamy2023asymptotic} gave a proof that planar graphs have Assouad–Nagata dimension at most 2, which yields a polynomial time algorithm for computing the corresponding covers.
Thus, the corresponding covers in \cref{col:asdim} can also be computed in polynomial time (in terms of the size of the provided representation of the string graph given in the input).
For some algorithmic applications of such covers, see~\cite{awerbuch1990sparse,bonamy2025distributed,bonamy2025local,erschler2023assouad,filtser2024scattering,jia2005universal,lee2016separators,lokshtanov20241}.
\\

Dunwoody \cite{dunwoody2007planar} showed that every connected, locally finite, quasi-transitive, planar graph is accessible (see also \cite{esperet2024structure,hamann2018accessibility,hamann2018planar}).
MacManus \cite{macmanus2023accessibility} recently gave a remarkable extension of this theorem by showing that every connected, locally finite, quasi-transitive graph that is quasi-isometric to a planar graph is accessible.
Thus, by \cref{thm:stringmain}, Dunwoody's \cite{dunwoody2007planar} theorem extends from planar graphs to string graphs.

\begin{theorem}\label{thm:accessible}
    Every connected, locally finite, quasi-transitive string graphs is accessible.
\end{theorem}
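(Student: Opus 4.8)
The plan is to deduce \cref{thm:accessible} directly from \cref{thm:stringmain} together with MacManus's \cite{macmanus2023accessibility} extension of Dunwoody's \cite{dunwoody2007planar} accessibility theorem. Let $S$ be a connected, locally finite, quasi-transitive string graph. First I would observe that $S$ is countable, since a connected locally finite graph has only countably many vertices; hence \cref{thm:stringmain} applies to $S$ and produces a planar graph $G$ with $V(G)=V(S)$ such that
\[
\frac{1}{23660800}\,d_S(u,v)\le d_G(u,v)\le 162\,d_S(u,v)
\]
for all $u,v\in V(S)$. In particular the identity map $V(S)\to V(G)$ is a surjective quasi-isometry (indeed a bi-Lipschitz equivalence with no additive error), so $S$ is quasi-isometric to a planar graph.

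It then remains to invoke MacManus's \cite{macmanus2023accessibility} theorem: every connected, locally finite, quasi-transitive graph that is quasi-isometric to a planar graph is accessible. All of its hypotheses on the graph itself --- connected, locally finite, quasi-transitive --- hold for $S$ by assumption, and the remaining hypothesis --- being quasi-isometric to a planar graph --- was just established in the previous paragraph. Therefore $S$ is accessible.

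I do not expect a genuine obstacle here, since the content is carried entirely by \cref{thm:stringmain} and MacManus's theorem. The only points worth checking carefully are that MacManus's theorem places no requirements on the target planar graph $G$ (so no further properties of $G$ need be verified --- although in fact $G$ is automatically locally finite here, as every $G$-neighbour of a vertex $v$ lies within $S$-distance $23660800$ of $v$ and $S$ is locally finite), and that \cref{thm:stringmain} is used in its full strength: the cruder fact that string graphs have bounded asymptotic dimension would not suffice, because accessibility of locally finite quasi-transitive graphs is not known to be a quasi-isometry invariant in general.
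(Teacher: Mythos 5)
Your proposal is correct and matches the paper's argument exactly: apply \cref{thm:stringmain} (noting a connected locally finite graph is countable) to obtain a quasi-isometric planar graph, then invoke MacManus's theorem \cite{macmanus2023accessibility} that connected, locally finite, quasi-transitive graphs quasi-isometric to planar graphs are accessible. The additional sanity checks you mention (local finiteness of $G$, and that bounded asymptotic dimension alone would not suffice) are sound but not needed, since MacManus's theorem imposes no hypotheses on the target planar graph.
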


There has been significant interest in planar Cayley graphs \cite{arzhantseva2004cayley,droms1998connectivity,droms2006infinite,dunwoody2007planar,georgakopoulos2017planar,georgakopoulos2019planar,georgakopoulos2019planar2,georgakopoulos2020planar,maschke1896representation,tucker1983finite} and Cayley graphs quasi-isometric to planar graphs \cite{bowditch2004planar,casson1994convergence,gabai1992convergence,macmanus2023accessibility,macmanus2024note,maillot2001quasi,mess1988seifert,tukia1988homeomorphic}.
Closely related to this, there are several results characterizing certain finitely presented groups $\Gamma$ in terms of being quasi-isometric to certain Riemannian surfaces.
For instance, $\Gamma$ is quasi-isometric to a complete
Riemannian plane if and only if $\Gamma$ is a virtual surface group \cite{bowditch2004planar,casson1994convergence,gabai1992convergence,maillot2001quasi,mess1988seifert,tukia1988homeomorphic}.
Maillot also showed that $\Gamma$ is quasi-isometric to a complete, simply
connected, planar Riemannian surface with non-empty geodesic boundary if and only if $\Gamma$ is virtually free.
In both these cases, $\Gamma$ is quasi-isometric to a planar graph since both surface groups and free groups have planar Cayley graphs.

MacManus \cite{macmanus2023accessibility} used his aforementioned theorem on accessibility to extend and unify these two theorems on finitely generated groups.
By \cref{thm:stringmain}, we obtain the following further extension of MacManus's \cite{macmanus2023accessibility} theorem
(our contribution is the equivalence between the first two bullets, the rest is given in \cite{macmanus2023accessibility}).

\begin{theorem}
    Let $\Gamma$ be a finitely generated group. Then the following are equivalent.
    \begin{itemize}
        \item $\Gamma$ is quasi-isometric to a string graph.
        \item $\Gamma$ is quasi-isometric to a planar graph.
        \item $\Gamma$ is quasi-isometric to a planar Cayley graph.
        \item Some finite index subgroup of $\Gamma$ admits a planar Cayley graph.
        \item $\Gamma$ is virtually a free product of free and surface groups.
    \end{itemize}
\end{theorem}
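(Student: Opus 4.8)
The plan is to leverage \cref{thm:stringmain} together with the chain of equivalences already established by MacManus \cite{macmanus2023accessibility}, so that the only genuinely new content is the equivalence of the first two bullets. The direction ``$\Gamma$ quasi-isometric to a planar graph $\Rightarrow$ $\Gamma$ quasi-isometric to a string graph'' is immediate, since every planar graph is itself a string graph (a planar graph is the intersection graph of suitably chosen arcs, or invoke circle packing), and the identity map is a quasi-isometry. So the substantive step is the converse: given that $\Gamma$ is quasi-isometric to some string graph $S$, produce a planar graph to which $\Gamma$ is quasi-isometric.

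First I would observe that a finitely generated group, being quasi-isometric to a string graph $S$, is in particular a quasi-isometric to a \emph{connected} string graph: one can pass to a connected component, or add a bounded amount of structure, without leaving the string graph class and without changing the quasi-isometry type, since $\Gamma$ is one-ended-or-finite in the relevant coarse sense and a quasi-isometry image is coarsely connected. One should also note that $S$ may be taken to be countable: $\Gamma$ is countable (indeed finitely generated), and a space quasi-isometric to a countable space admits a countable ``net'', which here can be realized as a countable induced subgraph of $S$, and induced subgraphs of string graphs are string graphs. Having arranged that $S$ is a countable string graph, I would then apply \cref{thm:stringmain} to obtain a planar graph $G$ with $V(G) = V(S)$ and
\[
\frac{1}{23660800}\, d_S(u,v) \le d_G(u,v) \le 162\, d_S(u,v)
\]
for all $u,v \in V(S)$. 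This bi-Lipschitz bound says precisely that the identity map $V(S) \to V(G)$ is a quasi-isometry between $S$ and $G$. Composing with the given quasi-isometry $\Gamma \to S$ (quasi-isometries are closed under composition), we conclude that $\Gamma$ is quasi-isometric to the planar graph $G$, which establishes the first $\Rightarrow$ second bullet implication. All the remaining equivalences — planar graph $\Leftrightarrow$ planar Cayley graph $\Leftrightarrow$ finite-index subgroup with planar Cayley graph $\Leftrightarrow$ virtually a free product of free and surface groups — are exactly MacManus's theorem \cite{macmanus2023accessibility}, which we simply cite.

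The main obstacle, such as it is, is bookkeeping rather than mathematics: one must be careful that the reductions to a \emph{countable} and \emph{connected} string graph genuinely stay inside the string graph class and genuinely preserve quasi-isometry type with $\Gamma$. The cleanest route is to note that $\Gamma$ finitely generated implies its Cayley graph is countable and locally finite, so any quasi-isometry $f\colon \Gamma \to S$ has image a countable subset $Y \subseteq V(S)$ that is coarsely dense; taking the subgraph of $S$ induced on an appropriate countable superset of $Y$ — or, if one prefers to avoid even that, observing directly that $\Gamma$ is quasi-isometric to the countable induced subgraph $S[Y']$ for a suitable countable connected $Y'$ — keeps us within countable string graphs, since both ``induced subgraph'' and ``countable'' are inherited from the string representation by restricting and (if needed) countably thinning the curve collection. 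Once the input to \cref{thm:stringmain} is legitimately a countable string graph, the rest is the formal two-line composition above.
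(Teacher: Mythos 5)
Your proof is correct and takes the same approach the paper intends: the equivalence between ``quasi-isometric to a planar graph'' and the last three bullets is MacManus's theorem, cited wholesale, and the new content is exactly the first $\Leftrightarrow$ second equivalence, which is immediate from \cref{thm:stringmain} in one direction and from ``planar $\subseteq$ string'' in the other. The paper in fact gives no explicit proof of this corollary beyond the parenthetical remark identifying which implication is new, so the only thing to check is whether your added care about reducing to a \emph{countable} string graph is needed and sound --- and it is: \cref{thm:stringmain} genuinely requires countability (the paper's own footnote shows it fails otherwise), so one must replace a putative uncountable $S$ by a countable induced subgraph. Your construction (take the countable image $Y=f(\Gamma)$ of the locally finite Cayley graph, enlarge it by countably many geodesics between nearby pairs of $Y$, and observe that this induced subgraph is a connected countable string graph quasi-isometric to $Y$ and hence to $\Gamma$) handles this correctly; the key facts you invoke --- that induced subgraphs of string graphs are string graphs and that the enlarged net has the same coarse geometry as $S$ --- both hold. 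One could tighten the phrasing by making explicit the bound on the length of the ``local'' geodesics you add (e.g.\ all pairs at $S$-distance at most $2K+1$ where $Y$ is $K$-dense), but the argument as sketched is complete and aligned with the paper.
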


We now turn our attention to some further quasi-isometry results on metric planar graphs and complete Riemannian surfaces.\\

A \emph{metric graph} $H$ is a graph along with an assignment of non-negative edge lengths (in this paper, the length of edges in (non-metric) graphs is otherwise always uniformly equal to 1).
We can view metric graphs as a length space in which the edges are replaces by intervals of the corresponding lengths. When the edge lengths are within $D\subseteq (0,\infty)$, we say that $H$ is an $D$-metric graph.
The coarse geometry of metric graphs tends to be harder to work with than that of non-metric (simplicial) graphs \cite{liu2023assouad}.

Georgakopoulos and Papasoglu \cite{georgakopoulos2023graph} asked whether every planar metric graph is quasi-isometric to a planar graph.
As a straightforward application of \cref{thm:stringmain} (see \cref{Lem:metrictostring}), we answer this in positive (note that every planar metric graph can be turned into a {$(0,1]$-metric} planar graph by subdividing edges appropriately).
Such a planar graph $G$ can be generated in polynomial time.

\begin{theorem}\label{thm:metricmain}
    Let $H$ be a countable $(0,1]$-metric planar graph. Then there is a planar graph $G$ with $V(G)=V(H)$ such that 
    \[
    \frac{1}{47321600}d_H(u,v)
    \le
    d_G(u,v)
    \le
    162
    d_H(u,v)
    +1
    \]
    for all $u,v\in V(H)$.
\end{theorem}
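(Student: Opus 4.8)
\textbf{Proof proposal for \cref{thm:metricmain}.}

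The plan is to reduce \cref{thm:metricmain} to \cref{thm:stringmain} by turning the $(0,1]$-metric planar graph $H$ into an ordinary (unweighted) string graph whose distances approximate those of $H$. The promised reduction is exactly what the lemma referenced in the excerpt as \cref{Lem:metrictostring} should provide, so the bulk of the work is to exhibit a suitable string representation. First I would fix a plane drawing of $H$ witnessing planarity. For each edge $e=uv$ of $H$ of length $\ell(e)\in(0,1]$, I would like to replace $e$ by a path so that the graph becomes unweighted but distances are essentially preserved up to a bounded factor. The obstacle here is that lengths can be arbitrarily small, so a naive subdivision does not control distances. Instead, I would keep the vertex set $V(H)$ fixed and, for each edge $e=uv$, draw $u$ and $v$ as two small curves that \emph{intersect} (contributing $1$ to the string-graph distance between $u$ and $v$ regardless of $\ell(e)$), while simultaneously ensuring that a path in $H$ using edges of total length $L$ maps to a walk in the string graph of length $\Theta(L)$ — this is where the $(0,1]$ bound matters, since each edge then costs between some fixed positive constant and $1$ in the rescaled metric.

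Concretely, here is the construction I would carry out. Scale so that we think of $H$ as having edge lengths in $(0,1]$; group the edges dyadically by length into classes $E_i=\{e:\ 2^{-i-1}<\ell(e)\le 2^{-i}\}$ for $i\ge 0$. For each vertex $v$, create a string $\sigma_v$ that is a small tree-like curve following the local rotation of $v$ in the plane drawing, with one ``arm'' reaching toward each incident edge. For an edge $e=uv\in E_i$, route the arms of $\sigma_u$ and $\sigma_v$ towards each other along (a thin neighbourhood of) the drawing of $e$, subdivided into $\Theta(i+1)$ consecutive auxiliary strings so that the string-graph distance contributed by traversing $e$ is $\Theta(i+1)=\Theta(\log(1/\ell(e))+1)$. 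Since $x\mapsto \log(1/x)+1$ is within a bounded multiplicative factor of... no — that fails for small $x$. So instead I would not try to match $\ell(e)$ up to a constant factor at all; rather, I would observe that any $(0,1]$-metric graph is bi-Lipschitz to the metric graph in which every edge of length $\le 1$ is replaced by an edge of length exactly $1$ \emph{after} also allowing the shortcut $d_H$-path to be used — more carefully, the point is that $d_H$ and the unweighted distance $d_{H'}$ of the graph $H'$ obtained from $H$ by forgetting all edge lengths (setting them to $1$) satisfy $d_{H'}(u,v)\le d_H(u,v)+1$ is false in general either. The correct and clean route, which I would take, is: let $S$ be the string graph obtained from the plane drawing of $H$ by thickening each vertex to a tiny disc and each edge $e$ to a ``caterpillar'' of $\lceil 1/\ell(e)\rceil$...

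Let me restate the plan at the right level of abstraction. The real content is a single lemma, call it \cref{Lem:metrictostring}: \emph{every countable $(0,1]$-metric planar graph $H$ admits a string graph $S$ with $V(S)\supseteq V(H)$ such that $d_S$ restricted to $V(H)$ is bi-Lipschitz equivalent to $d_H$, with explicit constants} (the factor $2$ in $47321600 = 2\cdot 23660800$ and the additive $+1$ in the upper bound are exactly the slack this reduction introduces). Granting this, \cref{thm:metricmain} follows immediately by applying \cref{thm:stringmain} to $S$ to get a planar graph $G'$ on $V(S)$, and then taking $G=G'$ (or a minor/subgraph of it on $V(H)$, adjusting by the additive constant): the lower bound $\frac{1}{23660800}\cdot\frac1{2}d_H \le d_{G}$ and the upper bound $d_G\le 162\, d_H + 1$ chain together from the two bi-Lipschitz estimates, and the polynomial-time claim follows since the construction of $S$ from a drawing of $H$ is explicit and the algorithm behind \cref{thm:stringmain} is polynomial in the representation size of $S$, which is polynomial in that of $H$.

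The hard part is making the lemma's construction genuinely bi-Lipschitz, i.e. controlling distances \emph{from below}: one must ensure that a short walk in $S$ cannot ``cheat'' by using the thickened-edge strings to travel between far-apart vertices of $H$ more cheaply than $d_H$ allows. The standard way I would handle this is to make each edge $e$ of $H$ of length $\ell(e)$ correspond in $S$ to a path of auxiliary strings of combinatorial length comparable to $\ell(e)$ after a global rescaling: pick the scale so that the \emph{shortest} edge still gets length $\ge 1$ is impossible for arbitrarily short edges, so instead one observes that in a $(0,1]$-metric graph one may assume without loss of generality (by a preliminary bi-Lipschitz modification contracting/rerouting) that all edge lengths lie in $[\tfrac12,1]$, which \emph{is} legitimate: replacing each edge of length $\ell<\tfrac12$ by an edge of length $\tfrac12$ changes $d_H$ by at most a factor $2$ on a path-metric where the cheapest traversal of an edge is still the edge itself, and changes it by at most an additive $1$ when one endpoint path must be entered and exited — this is precisely the source of both the factor $2$ and the $+1$. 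After this reduction every edge has length $\Theta(1)$, so the caterpillar for each edge has $O(1)$ strings, distances up and down are preserved up to universal constants by a routine planarity-respecting routing of the strings along the drawing, and the quantitative bounds advertised in the statement fall out by composing the three estimates (the $\tfrac12$-rounding, the string representation, and \cref{thm:stringmain}) and bookkeeping the constants, with $47321600$ being the product of $23660800$ with the factor $2$ lost in the rounding step.
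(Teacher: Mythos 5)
Your high-level plan matches the paper's: establish a lemma (this is \cref{Lem:metrictostring}) converting a $(0,1]$-metric planar graph $H$ into a string graph $S$ with $V(S)=V(H)$ and $\tfrac12 d_H(u,v)\le d_S(u,v)\le d_H(u,v)+1$, then compose with \cref{thm:stringmain}; the constants $47321600=2\cdot 23660800$ and the additive term arise exactly as you guessed. The gap is in your construction of $S$.

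Your rounding step is false. You claim that replacing every edge of length $\ell<\tfrac12$ by one of length $\tfrac12$ distorts $d_H$ by at most a factor of $2$ plus an additive $1$. But take $H$ to be a path on $n+1$ vertices with every edge of length $1/n$: the endpoints are at $H$-distance $1$, while after rounding they are at distance $n/2$, an unbounded blow-up. The underlying obstruction is that a $(0,1]$-metric geodesic of bounded total length may traverse arbitrarily many arbitrarily short edges, so any construction that charges a fixed positive combinatorial cost \emph{per edge traversed} --- rounding, the per-edge caterpillar, or the one-string-per-edge variant you float earlier --- will distort distances by an unbounded factor on such paths. You noticed this obstruction yourself for the dyadic-subdivision attempt; it sinks the $[\tfrac12,1]$-rounding just the same.

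The paper's construction avoids the issue by letting a single string ``skip over'' many short edges. Fix a plane embedding of the metric graph $H$, and for each vertex $u$ let $S_u\subseteq\mathbb{R}^2$ be the set of all points of the drawing at $H$-metric distance at most $1$ from $u$; this is a path-connected set, and $S_u\cap S_v\ne\emptyset$ precisely when $d_H(u,v)\le 2$. The lower bound $\tfrac12 d_H\le d_S$ is then immediate, since each edge of an $S$-path moves you at most $H$-distance $2$. For the upper bound one walks along an $H$-geodesic from $u$ to $v$ and greedily picks waypoint vertices $x_0=u,\dots,x_t=v$ with consecutive steps of $H$-length in $(1,2]$ (possible precisely because edge lengths are at most $1$, so one never overshoots by more than $1$); consecutive $x_i$ are then adjacent in $S$ and $t\le d_H(u,v)+1$. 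That is the content of \cref{Lem:metrictostring}. If you replace your rounding/caterpillar step with this ball-of-radius-one construction, the rest of your argument --- composing the two bi-Lipschitz estimates and invoking \cref{thm:stringmain}, with the polynomial-time remark carrying over --- is correct and coincides with the paper's proof.
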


Beyond the general interest in metric planar
graphs and also in groups acting on planar surfaces \cite{georgakopoulos2020planar},
one other motivation of Georgakopoulos and Papasoglu \cite{georgakopoulos2023graph} was their coarse Kuratowski conjecture that graphs forbidding $K_5$ and $K_{3,3}$ as $K$-fat minors are quasi-isometric to planar graphs.
Although \cref{thm:metricmain} and \cref{thm:stringmain} provide substantial progress towards this conjecture, we remark that the closely related graph minor analogue that graphs forbidding $H$ as a $K$-fat minor are quasi-isometric to $H$-minor-free graphs is false \cite{albrechtsen2025counterexample,davies2024fat}, (although it is known to be true for some small graphs $H$ \cite{albrechtsen2025fatK2n,albrechtsen2024characterisation,chepoi2012constant,fujiwara2023coarse,georgakopoulos2023graph,manning2005geometry,nguyen2025asymptotic}).
Such $K$-fat $H$-minor-free graphs need not even be quasi-isometric to $H'$-minor-free graphs for any graph $H'$ \cite{albrechtsen2025counterexample}.
\\

It is well known (see~\cite{bonamy2023asymptotic,burago2001course,creutz2022triangulating,georgakopoulos2026triangulating,ntalampekos2023polyhedral,saucan2008intrinsic}) that complete Riemannian surfaces are quasi-isometric to locally finite\footnote{A $(0,1]$-metric graph is \emph{locally finite} if every bounded radius ball contains at most a finite number of vertices.} $(0,1]$-metric graphs embeddable in the surface.
This was used for instance to show that complete Riemannian surfaces of bounded Euler genus (such as complete Riemannian planes) have Assouad–Nagata dimension at most 2 \cite{bonamy2023asymptotic} and in Maillot's~\cite{maillot2001quasi} aforementioned classification of finitely generated virtually free groups as being those quasi-isometric to certain planar Riemannian surfaces.
Ntalampekos and Romney \cite{ntalampekos2023polyhedral} also used a strong version for length surfaces as a step in their proof that any length surface is the Gromov–Hausdorff limit of polyhedral surfaces with controlled geometry. They gave further applications of this, including a new proof of the Bonk-Kleiner theorem \cite{bonk2002quasisymmetric} characterizing Ahlfors 2-regular quasispheres.

By \cref{thm:metricmain}, we obtain the following refinement, which significantly generalizes the previously discussed fact that finitely generated groups quasi-isometric to complete Riemannian planes are quasi-isometric to planar graphs.

\begin{theorem}\label{thm:Riemannain}
    Complete Riemannian planes are quasi-isometric to locally finite planar graphs.
\end{theorem}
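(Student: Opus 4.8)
The plan is to derive \cref{thm:Riemannain} from \cref{thm:metricmain} by composing quasi-isometries. First I would invoke the well-known fact (cited in the excerpt, see \cite{bonamy2023asymptotic,maillot2001quasi,saucan2008intrinsic}) that every complete Riemannian plane $M$ is quasi-isometric to a locally finite metric planar graph $H$. Concretely, one takes a maximal $\varepsilon$-separated net $N$ in $M$ (for some fixed $\varepsilon>0$), forms the graph $H$ on vertex set $N$ with an edge between two net points whenever their distance in $M$ is at most, say, $3\varepsilon$, and assigns each edge a length equal to the Riemannian distance between its endpoints; completeness of $M$ gives that closed balls are compact, hence $H$ is locally finite, and maximality of the net together with the triangle inequality makes the inclusion $N\hookrightarrow M$ a quasi-isometry. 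One must check that $H$ is planar: since $M$ is a plane (homeomorphic to $\mathbb{R}^2$), the net points can be placed at their actual locations and short edges drawn as geodesic segments; after a small perturbation one obtains a plane drawing, so $H$ is a planar metric graph. The edge lengths lie in a bounded interval $(0,3\varepsilon]$, so after rescaling the metric by $1/(3\varepsilon)$ we may assume $H$ is a countable $(0,1]$-metric planar graph, and rescaling is a quasi-isometry.

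Next I would apply \cref{thm:metricmain} to this countable $(0,1]$-metric planar graph $H$, obtaining a planar graph $G$ with $V(G)=V(H)$ that is quasi-isometric to $H$ (the additive $+1$ and multiplicative constants in \cref{thm:metricmain} are exactly a quasi-isometry with the identity map on vertices as the quasi-isometric embedding, which is also quasi-surjective since it is the identity on vertices). Composing the three quasi-isometries $M \to H \to (\text{rescaled }H) \to G$ yields that $M$ is quasi-isometric to the planar graph $G$. Finally I must verify that $G$ is locally finite: this follows because $G$ and $H$ have the same vertex set, the identity map $V(H)\to V(G)$ is a quasi-isometry, and quasi-isometries between graphs with a common vertex set that are the identity on vertices take bounded balls to bounded balls up to a multiplicative and additive constant; since $H$ is locally finite, bounded balls in $H$ are finite, hence bounded balls in $G$ are contained in finite sets and $G$ is locally finite as well. (Alternatively, one observes directly that the planar graph produced by \cref{thm:metricmain} in \cref{thm:stringmain}'s construction adds only boundedly many vertices and edges per region, but invoking the vertex-set identity is cleaner.)

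The main obstacle, such as it is, is purely bookkeeping: one needs to make the classical ``Riemannian plane is quasi-isometric to a locally finite planar metric graph'' step fully rigorous, in particular (i) confirming that the net graph $H$ can genuinely be realised as a \emph{plane} graph — this uses that $M$ is homeomorphic to $\mathbb{R}^2$ and that for a sufficiently fine net the union of the drawn geodesic edges can be made to avoid spurious crossings, possibly by passing to a subnet or by a limiting/perturbation argument — and (ii) checking that the quasi-isometry constants of this step, which depend on $\varepsilon$ and on nothing else once we have rescaled, compose cleanly with the universal constants $47321600$, $162$, and the $+1$ of \cref{thm:metricmain}. Since \cref{thm:metricmain} already allows a nonzero additive term, none of this causes difficulty; the statement of \cref{thm:Riemannain} does not ask for explicit constants, so it suffices to track that all maps involved are quasi-isometries and that local finiteness is preserved along the chain. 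I therefore expect the proof to be short, essentially a paragraph citing the net construction and then one line of composition.
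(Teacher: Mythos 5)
Your proposal is correct and takes the same route as the paper, which presents \cref{thm:Riemannain} as an immediate corollary of \cref{thm:metricmain} combined with the cited classical fact that complete Riemannian planes are quasi-isometric to locally finite metric planar graphs. You have simply spelled out the steps the paper leaves implicit (the net construction, rescaling to $(0,1]$ edge lengths rather than subdividing, and using the lower quasi-isometry bound to carry local finiteness from $H$ to $G$), all of which are sound.
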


In fact, we shall further extend our techniques to refine \cref{thm:Riemannain} in three ways.
Firstly, we obtain a quasi-isometry which is also an embedding of the graph into the Riemannian plane (as in~\cite{bonamy2023asymptotic,burago2001course,creutz2022triangulating,georgakopoulos2026triangulating,ntalampekos2023polyhedral,saucan2008intrinsic}).
Secondly, we also extend this result to Riemannian surfaces of bounded Euler genus.
Lastly, we show that such a graph and embedding can be chosen to be a triangulation.
A key ingredient in this last refinement is a recent result of Georgakopoulos and Vigolo \cite{georgakopoulos2026triangulating} that complete Riemannian surfaces admit triangulations by metric graphs that are quasi-isometric to the surface.
For an embedded graph $G\subset \Sigma$, we denote by $G^{(1)}$ its simplicial 1-skeleton (in which all edges have length 1).

\begin{theorem}\label{thm:RiemannianTriangulation}
    Let $\Sigma$ be a complete Riemannian surface (without boundary) of bounded Euler genus.
    Then $\Sigma$ has a triangulation $G\subset \Sigma$ such that $G^{(1)}\hookrightarrow \Sigma$ is a quasi-isometry.
\end{theorem}

The quasi-isometry in \cref{thm:RiemannianTriangulation} can be taken to be a $(10^6,10^6+10^4 g)$-quasi-isometry where $g$ is the Euler genus of the Riemannian surface.
The dependence on the Euler genus is necessary since for each fixed $M\ge 1$, there are complete Riemannian surfaces of genus at most $g$ that admit no such $(M, \Omega(\sqrt{\log g}))$-quasi-isometric simplicial triangulation \cite{davies2025riemannian}.
In particular, \cref{thm:RiemannianTriangulation} does not hold for complete Riemannian surfaces of unbounded genus~\cite{davies2025riemannian}.
Georgakopoulos and Vigolo \cite{georgakopoulos2026triangulating} recently showed that complete Riemannian surfaces with uniform nets also admit quasi-isometric simplicial triangulations as in \cref{thm:RiemannianTriangulation}, and
Bowditch~\cite{bowditch2020bilipschitz} showed that complete Riemannian manifolds of bounded geometry also admit such simplicial triangulations.

We remark that while we prove \cref{thm:RiemannianTriangulation} for Riemannian surfaces without boundary, it is possible to extend \cref{thm:RiemannianTriangulation} to Riemannian surfaces with boundary with more technical arguments and with the same resulting $(10^6,10^6+10^4 g)$-quasi-isometry bounds.
\\

We expect our quasi-isometry results to have many further applications. We quickly give two more concerning $\ell_1$-embeddings and the coarse Menger conjecture.
\\

A well-known conjecture (see \cite{gupta2004cuts}) states that planar graphs admit embeddings into $\ell_1$ with bounded distortion.
\cref{thm:stringmain} implies that this is equivalent to string graphs having embeddings into $\ell_1$ with bounded distortion.
Rao \cite{rao1999small} showed that $n$-vertex planar graphs have embeddings into $\ell_1$ with distortion at most $O(\sqrt{\log n})$, and \cref{thm:stringmain} extends this to string graphs.

\begin{theorem}
    Every $n$-vertex string graph has a $O(\sqrt{\log n})$-distortion embedding into $\ell_1$.
\end{theorem}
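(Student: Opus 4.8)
The plan is to combine Rao's embedding result for planar graphs with \cref{thm:stringmain} in the obvious way, treating quasi-isometry as a bounded perturbation of the metric. First I would recall the setup: given an $n$-vertex string graph $S$, apply \cref{thm:stringmain} to obtain a planar graph $G$ with $V(G)=V(S)$ and constants $\alpha = \frac{1}{23660800}$, $\beta = 162$ such that $\alpha\, d_S(u,v) \le d_G(u,v) \le \beta\, d_S(u,v)$ for all $u,v \in V(S)$. Since $V(G)=V(S)$, the graph $G$ also has $n$ vertices, so Rao's theorem gives a map $\varphi \colon V(G) \to \ell_1$ with $\frac{1}{c} d_G(u,v) \le \|\varphi(u)-\varphi(v)\|_1 \le c\, d_G(u,v)$ for some $c = O(\sqrt{\log n})$.

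The next step is to transport this to $S$. The map $\varphi$, viewed as a map $V(S) \to \ell_1$, satisfies
\[
\frac{\alpha}{c}\, d_S(u,v) \le \frac{1}{c} d_G(u,v) \le \|\varphi(u)-\varphi(v)\|_1 \le c\, d_G(u,v) \le c\beta\, d_S(u,v),
\]
so the distortion of $\varphi$ as an embedding of $(V(S), d_S)$ is at most $\frac{c\beta}{\alpha/c} = \frac{\beta}{\alpha} c^2 = O(\log n)$. To recover the claimed $O(\sqrt{\log n})$ bound rather than $O(\log n)$, I would not chain the quasi-isometry through Rao's theorem as a black box on the metric, but instead observe that $G$ is itself a planar graph on $n$ vertices, so Rao's theorem applies to $G$ directly with distortion $O(\sqrt{\log n})$; the quasi-isometry constants $\alpha, \beta$ are absolute (independent of $n$), so they only contribute a multiplicative constant, not a further $\sqrt{\log n}$ factor. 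Concretely, the final distortion bound is $c \cdot \beta \cdot (c/\alpha)^{-1}$ — wait, more carefully: upper side $\|\varphi(u)-\varphi(v)\|_1 \le c\, d_G(u,v) \le c\beta\, d_S(u,v)$ and lower side $\|\varphi(u)-\varphi(v)\|_1 \ge \frac1c d_G(u,v) \ge \frac{\alpha}{c} d_S(u,v)$, giving distortion $\le \frac{c\beta}{\alpha/c} = \frac{\beta}{\alpha}\, c^2$. That is $O(\log n)$, so I need to be more careful about where the square-root is lost.

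The resolution of this apparent loss is the real content: since $G$ is planar with the \emph{same vertex set} as $S$, one should embed $G$ with Rao's theorem to get distortion $O(\sqrt{\log n})$ \emph{for the metric $d_G$}, and then note that $d_S$ and $d_G$ agree up to the fixed constants $\alpha,\beta$, so composing costs only a constant factor $\beta/\alpha = O(1)$ on top of the $O(\sqrt{\log n})$ — the $c^2$ term above arose only because I double-counted Rao's distortion on both sides, whereas in fact one pays it once on one side and a constant on the other after absorbing the quasi-isometry. Thus the distortion is $O(\sqrt{\log n}) \cdot O(1) = O(\sqrt{\log n})$. I expect the only subtlety — and the thing to state carefully — is this bookkeeping: a quasi-isometry with \emph{additive} error would genuinely worsen the distortion, but \cref{thm:stringmain} gives a \emph{multiplicative} (bi-Lipschitz) equivalence with absolute constants, which is exactly what lets the $\sqrt{\log n}$ bound survive unchanged up to a constant. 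No further ideas are needed; the proof is a two-line composition once this point is made explicit.
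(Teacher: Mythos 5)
Your final argument is correct and is exactly the paper's intended proof: the paper states this as an immediate corollary of composing \cref{thm:stringmain} (which gives a planar graph $G$ on $V(S)$ with $d_G$ bi-Lipschitz equivalent to $d_S$ up to absolute constants) with Rao's $O(\sqrt{\log n})$-distortion embedding of $G$ into $\ell_1$. One caution: your initial symmetric normalization $\frac{1}{c}d_G(u,v) \le \|\varphi(u)-\varphi(v)\|_1 \le c\,d_G(u,v)$ with $c = O(\sqrt{\log n})$ is actually a misstatement of Rao's theorem (such a map has distortion $c^2 = O(\log n)$), so the subsequent ``resolution'' is not mere bookkeeping but a necessary correction — the right way to state Rao's result is asymmetrically, e.g.\ $d_G(u,v) \le \|\varphi(u)-\varphi(v)\|_1 \le O(\sqrt{\log n})\,d_G(u,v)$, after which chaining through the bi-Lipschitz constants $\alpha,\beta$ of \cref{thm:stringmain} costs only the absolute factor $\beta/\alpha$ and the $O(\sqrt{\log n})$ survives.
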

For a more extensive survey and discussion of the applications of such embeddings, see~\cite{gupta2004cuts}.
\\

Albrechtsen, Huynh, Jacobs, Knappe and Wollan~\cite{albrechtsen2024menger}, and independently Georgakopoulos
and Papasoglu \cite{georgakopoulos2023graph} conjectured a coarse analogue of Menger's \cite{menger1927allgemeinen} theorem that for a graph $G$ containing two vertex sets $S,T$, either there are $k+1$ paths between $S$ and $T$ pairwise at distance at least $c$, or there is a set of at most $k$ balls in $G$ of radius at most $f(c,k)$ such that every path between $S$ and $T$ intersects one of these balls.
This turns out to be false in general~\cite{nguyen2025counterexample}, and even in the weak version where up to $f(c,k)$ balls are allowed \cite{nguyen2025asymptotic4}.
However, it is true for certain classes such as those of bounded path-width \cite{nguyen2025asymptotic5}.
A particularly interesting open case is that of planar graphs.
Nguyen, Scott, and Seymour \cite{nguyen2025asymptotic6} proved a version for paths between vertices on the outerface of a planar graph. By \cref{thm:metricmain}, we obtain the following extension of their result to metric planar graphs\footnote{We remark that there is a technicality of preserving vertices on the outerface, however, the proof of \cref{thm:metricmain} gives this.} (they proved the same theorem for (simplicial) graphs, just with a much better constant than $10^{10}$).

\begin{theorem}\label{Menger}
    Let $c, k \ge 0$, and let $G$ be a metric planar graph, with vertex sets $S, T$ on the outerface. Then either:
    \begin{itemize}
        \item there are $k + 1$ paths between $S$ and $T$, pairwise at distance least $c$; or
        \item there is a set of at most $k$ balls in $G$ of radius at most $10^{10}k^3c$ of G, such that every path between $S$ and $T$ intersects one of these balls.
    \end{itemize}
\end{theorem}

One can also obtain versions of \cref{Menger} for string graphs and complete Riemannian surfaces homeomorphic to a disk.
\\

In \cref{Sec:Pre} we introduce basic definitions, deduce \cref{thm:metricmain} from \cref{thm:stringmain}, and also reduce proving \cref{thm:stringmain} to its finite case (\cref{thm:stringmainfinite}).
We also introduce the necessary framework for our approach to proving \cref{thm:stringmain}, namely through viewing string graphs as region intersection graphs, and the notion of ``impressions'' which provides a convenient way of finding quasi-isometries from region intersection graphs.
The rest of the proof in \cref{sec:outer,sec:encase,sec:string} is essentially a delicate technical induction.
In \cref{sec:outer} we prove a weakening of \cref{thm:stringmain} for the subclass outerstring graphs (\cref{thm:outerstring}).
This is used in \cref{sec:encase} to find a technical structure tailored for use in the inductive argument that we call an ``encasing'' (see \cref{lem:encase4}).
Finally, in \cref{sec:string}, we prove \cref{thm:stringmain} using \cref{lem:encase4}.
Lastly, in \cref{sec:Riemannian}, we build on one version of our main technical result (\cref{thm:stringimpression}) to prove \cref{thm:RiemannianTriangulation}.

\begin{remark}
More recently, Chang, Conroy, Tan, and Zheng \cite{chang20251} have independently obtained another proof of a result very similar to \cref{thm:stringmain}.
While our proof is self contained and builds the quasi-isometry from first principles, theirs adapts a shortcut partition construction using gridtrees developed in \cite{chang2023covering}.
We remark that their motivation and applications differs significantly from the present paper and we encourage the interested reader to also see \cite{chang20251} for further applications.
For instance, they construct compact (approximate) distance oracles for string graphs and also prove that for any $\epsilon >0$, $n$-vertex string graphs have $(1+\epsilon , O(\epsilon^{-4} \log^{18}n))$-quasi-isometric metric planar graphs.

In \cite{davies2025coarse}, we will build on the work of the present paper and \cite{chang20251} to prove versions of \cref{thm:metricmain} and \cref{thm:stringmain} for minor-free graphs.

\end{remark}

\section{Preliminaries}\label{Sec:Pre}

For~$M, A \ge 0$ with $M \geq 1$, an \emph{$(M,A)$-quasi-isometry} from a metric space~$G$ to another metric space~$H$ is a map~$f: G \to H$ such that
\begin{enumerate}
    \item $M^{-1} d_G(u, v) - A \leq d_H(f(u),f(v)) \leq M  d_G(u,v) + A$ for every~$u, v \in G$, and
    \item for every $h\in H$, there exists some $v\in G$ with $d_H(h,f(v)) \leq A$.
\end{enumerate}
We say that $G$ is \emph{$(M,A)$-quasi-isometric} to $H$ if there exists a $(M,A)$-quasi-isometry from $G$ to $H$.
Two metric spaces $G$ and $H$ are \emph{quasi-isometric} if they are $(M,A)$-quasi-isometry for some $M,A\ge 0$.
Quasi-isometries for graphs are of course defined in the same expected way.

For a set of vertices $A$ of a graph $G$, we let $G[A]$ denote the \emph{induced subgraph} of $G$ on vertex set $A$.
We say that $A$ is \emph{connected} if the induced subgraph $G[A]$ is connected.
We let $N_G(A)$ denote the \emph{neighbourhood} of the vertex set $A$ in $G$ (the vertices of $V(G)\backslash A$ adjacent to a vertex of $A$).
For a positive real $t$, we let $N^t_G[A]$ be the vertices at distance at most $t$ from $A$.
If $A=\{u\}$, then we simply use $N_G(u)$, $N^t_G[u]$.
The \emph{weak diameter} of $A$ in $G$ is equal to the maximum distance (in $G$) between two vertices of $A$.
Given a set $F$ and another collection of sets $\mathcal{H}$, we let $I_{\mathcal{H}}(F)$ denote the subcollection of elements $H\in \mathcal{H}$ that intersect $F$.

Given a graph $G$ and a collection of connected vertex sets $\mathcal{H}$ of $G$, we let $\textbf{RIG}(G,\mathcal{H})$ be the graph with vertex set $\mathcal{H}$ where, $H_1,H_2\in \mathcal{H}$ are adjacent if they intersect.
We call $\textbf{RIG}(G,\mathcal{H})$ a \emph{region intersection graph}.
Region intersection graphs were introduced by Lee \cite{lee2016separators} as a generalization of string graphs (usually we consider region intersection graphs $\textbf{RIG}(G,\mathcal{H})$ in which $G$ forbids some graph as a minor, since otherwise the class contains all graphs).
Finite string graphs are exactly finite region intersection graphs $\textbf{RIG}(G,\mathcal{H})$ for which $G$ is planar~\cite{lee2016separators} and this is how we will view string graphs to prove \cref{thm:stringmain} (or its finite version, \cref{thm:stringmainfinite}).

If $\mathcal{M}$ is a collection of disjoint connected vertex sets in $G$, then we let $\textbf{IM}(G,\mathcal{M})$ be the graph with vertex set $\mathcal{M}$ where $M_1,M_2\in \mathcal{M}$ are adjacent if there is an edge in $G$ between $M_1$ and $M_2$ (which case, we say that they \emph{touch}).
This is one way to define \emph{induced minors} of a graph $G$.

Let $G$ be a graph and $\mathcal{H}$ a collection of connected vertex sets of $G$.
If every edge of $G$ is contained in $G[H]$ for some $H\in \mathcal{H}$, then we say that $\mathcal{H}$ \emph{spans} $G$.
More generally, we say that $\mathcal{H}$ \emph{$z$-spans} $G$ if for every edge $uv$ of $G$, there exists $H_u,H_v \in \mathcal{H}$ with $u\in H_u$ and $v\in H_v$, such that $d_{\textbf{RIG}(G,\mathcal{H})}(H_u,H_v) \le z$.
Note that $\mathcal{H}$ spans $G$ if and only if it 0-spans $G$.

For a graph $G$ and two collections of connected vertex sets $\mathcal{H}, \mathcal{M}$ of $G$ with the sets in $\mathcal{M}$ being disjoint, we say that $(G,\mathcal{M})$ is a \emph{$(x,y)$-impression} of $(G,\mathcal{H})$ if
\begin{itemize}
    \item for every $M\in \mathcal{M}$, $I_\mathcal{H}(M)$ has weak diameter at most $x$ in $\textbf{RIG}(G,\mathcal{H})$,
    \item for every $H\in \mathcal{H}$, $I_\mathcal{M}(H)$ has weak diameter at most $y$ in $\textbf{IM}(G,\mathcal{M})$, and
    \item $\bigcup_{M\in \mathcal{M}} M = V(G)$.
\end{itemize}

Impressions provide a convenient way of obtaining a quasi-isometry from a region intersection graph $\textbf{RIG}(G,\mathcal{H})$ to a minor of $G$.

\begin{proposition}\label{prop:impression}
    Let $(G,\mathcal{M})$ be a $(x,y)$-impression of $(G,\mathcal{H})$ where $\mathcal{H}$ $z$-spans $G$.
    Then there is a function $f:\mathcal{H} \to \mathcal{M}$ such that for all $H_1,H_2\in \mathcal{H}$, we have 
    \[
    \frac{1}{x+z} d_{\textbf{RIG}(G,\mathcal{H})}(H_1,H_2)
    -
    \frac{x}{x+z}
    \le
    d_{\textbf{IM}(G,\mathcal{M})}(f(H_1),f(H_2)) 
    \le 
    2y d_{\textbf{RIG}(G,\mathcal{H})}(H_1,H_2),
    \]
    and for every $M\in \mathcal{M}$ there exists some $H\in \mathcal{H}$ such that $d_{\textbf{IM}(G,\mathcal{M})}(f(H),M) \le y$.
    
    Furthermore, $f$ can be chosen to be any function $f:\mathcal{H} \to \mathcal{M}$ such that $f(H)\in I_{\mathcal{M}}(H)$ for all $H\in \mathcal{H}$.
\end{proposition}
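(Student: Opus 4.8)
The plan is to define $f$ by sending each $H \in \mathcal{H}$ to some $M \in \mathcal{M}$ that intersects $H$; this is possible because $\bigcup_{M \in \mathcal{M}} M = V(G)$ and $H \neq \emptyset$, so pick any vertex of $H$ and let $f(H)$ be the (unique) member of $\mathcal{M}$ containing it. The last conclusion ("for every $M \in \mathcal{M}$ there exists $H$ with $d_{\textbf{IM}}(f(H), M) \le y$") should follow from combining surjectivity-up-to-$y$ with the $y$-bound: since $\bigcup_M M = V(G)$ and every $H$ is nonempty, pick $v \in M$ and... actually one needs $\mathcal{H}$ to "see" every vertex — this requires that $I_{\mathcal{H}}(M) \neq \emptyset$ for each $M$, which I would first check holds (using that $\mathcal{H}$ $z$-spans $G$, hence covers enough of $G$, or deriving it directly; this is a small point to nail down). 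Then for $H \in I_{\mathcal{H}}(M)$ we have $H$ intersecting both $M$ and $f(H)$, so $f(H)$ and $M$ both lie in $I_{\mathcal{M}}(H)$, giving $d_{\textbf{IM}}(f(H), M) \le y$ by the second impression axiom.

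For the upper bound, I would take a shortest path $H_1 = A_0, A_1, \dots, A_k = H_2$ in $\textbf{RIG}(G,\mathcal{H})$ with $k = d_{\textbf{RIG}(G,\mathcal{H})}(H_1,H_2)$. Consecutive $A_{i-1}, A_i$ intersect, so they share a vertex lying in some $M \in \mathcal{M}$; hence $M \in I_{\mathcal{M}}(A_{i-1}) \cap I_{\mathcal{M}}(A_i)$, and also $f(A_{i-1}) \in I_{\mathcal{M}}(A_{i-1})$, $f(A_i) \in I_{\mathcal{M}}(A_i)$. By the weak-diameter-$\le y$ condition on $I_{\mathcal{M}}(A_{i-1})$ and on $I_{\mathcal{M}}(A_i)$, we get $d_{\textbf{IM}}(f(A_{i-1}), f(A_i)) \le 2y$, and summing over $i$ yields $d_{\textbf{IM}}(f(H_1), f(H_2)) \le 2yk$.

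For the lower bound, I would go the other direction: take a shortest path $f(H_1) = M_0, M_1, \dots, M_\ell = f(H_2)$ in $\textbf{IM}(G,\mathcal{M})$ with $\ell = d_{\textbf{IM}}(f(H_1),f(H_2))$. For each consecutive touching pair $M_{j-1}, M_j$ there is an edge $uv$ of $G$ with $u \in M_{j-1}$, $v \in M_j$; since $\mathcal{H}$ $z$-spans $G$ there are $B_{j-1} \ni u$, $B_j' \ni v$ in $\mathcal{H}$ with $d_{\textbf{RIG}}(B_{j-1}, B_j') \le z$. Moreover $B_{j-1}$ intersects $M_{j-1}$ and $B_j'$ intersects $M_j$; within each $M_j$ the set $I_{\mathcal{H}}(M_j)$ has weak diameter $\le x$, so the "incoming" and "outgoing" $\mathcal{H}$-sets at $M_j$ are within distance $x$ in $\textbf{RIG}(G,\mathcal{H})$. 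Chaining these estimates around the path, plus handling the endpoints (where $f(H_i)$ intersects $H_i$, so $H_i \in I_{\mathcal{H}}(f(H_i))$, costing at most $x$ on each end), gives $d_{\textbf{RIG}(G,\mathcal{H})}(H_1,H_2) \le (x+z)\ell + x$, which rearranges to the claimed inequality $\frac{1}{x+z} d_{\textbf{RIG}(G,\mathcal{H})}(H_1,H_2) - \frac{x}{x+z} \le d_{\textbf{IM}(G,\mathcal{M})}(f(H_1),f(H_2))$.

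The main obstacle I anticipate is bookkeeping in the lower bound: carefully accounting for the per-step cost (one $z$ from spanning plus one $x$ from the weak-diameter transfer inside each intermediate $M_j$) without double-counting, and correctly absorbing the two endpoint corrections into a single additive $-\frac{x}{x+z}$ term rather than $-\frac{2x}{x+z}$. I would handle this by setting up the telescoping sum explicitly, indexing the "entry" and "exit" $\mathcal{H}$-sets at each $M_j$, and checking that the endpoint at $M_0 = f(H_1)$ contributes the single leftover $x$ while all interior vertices contribute exactly $x + z$ per edge of the $\textbf{IM}$-path.
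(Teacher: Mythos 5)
Your proposal is correct and matches the paper's argument essentially step by step: $f(H)$ chosen in $I_{\mathcal{M}}(H)$, the upper bound via a shared $M$ on each RIG-edge giving cost $2y$ per step, and the lower bound by walking an IM-geodesic $M_0,\dots,M_t$ and chaining one $x$ per vertex (via weak diameter of $I_{\mathcal{H}}(M_j)$) with one $z$ per edge (via $z$-spanning), yielding $d_{\textbf{RIG}}(H_1,H_2)\le (x+z)t + x$. Your flagged concern about $I_{\mathcal{H}}(M)\neq\emptyset$ is reasonable but is also left implicit in the paper; in practice $\mathcal H$ $z$-spanning $G$ (together with $\bigcup_{M\in\mathcal M}M=V(G)$ and $G$ having no isolated vertices in the relevant applications) suffices.
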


\begin{proof}
    Let $f:\mathcal{H} \to \mathcal{M}$ be chosen so that $f(H)\in I_{\mathcal{M}}(H)$ for all $H\in \mathcal{H}$.
    Clearly such a function exists since $I_{\mathcal{M}}(H)$ is always non-empty.
    
    If $H_1$ and $H_2$ are adjacent in $\textbf{RIG}(G,\mathcal{H})$, then they intersect. So then $I_\mathcal{M}(H_1\cup H_2)$ is a connected set in $\textbf{IM}(G,\mathcal{M})$ and furthermore has weak diameter at most $2y$.
    It follows for (possibly non-adjacent $H_1,H_2\in \mathcal{H}$) that $d_{\textbf{IM}(G,\mathcal{M})}(f(H_1),f(H_2)) \le 2y d_{\textbf{RIG}(G,\mathcal{H})}(H_1,H_2)$.

    Suppose that $d_{\textbf{IM}(G,\mathcal{M})}(f(H_0),f(H_t)) =t$ and let $M_0,\ldots ,M_t$ be a path in $\textbf{IM}(G,\mathcal{M})$ between $M_0=f(H_0)$ and $M_t=f(H_t)$.
    For each $1\le i < t$, there exists some $u_iv_i\in E(G)$ with $u_i \in M_{i-1}$ and $v_i\in M_i$.
    For each $1\le i < t$, let $H_i',H_i\in \mathcal{H}$ be such that $u_i\in V(H_i')$, $v_i\in V(H_i)$, and the distance between $H_i'$ and $H_i$ in $\textbf{RIG}(G,\mathcal{H})$ is at most $z$.
    For each $1\le i \le t$, we have that $H_{i-1},H_i'\in I_{\mathcal{H}}(M_{i-1})$.
    Thus, there is a path in $\textbf{RIG}(G,\mathcal{H})$ between $H_0$ and $H_t$ of length at most $(t+1)x +tz$.
    It now follows that for all $H_1,H_2\in \mathcal{H}$, we have 
    \[
    \frac{1}{x+z} d_{\textbf{RIG}(G,\mathcal{H})}(H_1,H_2)
    -
    \frac{x}{x+z}
    \le
    d_{\textbf{IM}(G,\mathcal{M})}(f(H_1),f(H_2)) 
    \le 
    2y d_{\textbf{RIG}(G,\mathcal{H})}(H_1,H_2).
    \]

    Lastly, consider some $M\in \mathcal{M}$. Choose some $H\in I_{\mathcal{H}}(M)$.
    Then $M,f(H)\in I_{\mathcal{M}}(H)$, and thus $M$ is at distance at most $y$ from $f(M)$ in $\textbf{IM}(G,\mathcal{M})$.
\end{proof}

In most but not all cases we consider, we will actually have that $\mathcal{H}$ spans $G$, which means that we simply have $z=0$ in \cref{prop:impression}.
Given $(G,\mathcal{H})$, we could simply reduce to the case the $\mathcal{H}$ spans $G$ by removing edges of $G$ not contained in $G[H]$ for any $H\in \mathcal{H}$ (which is something we often do).
However, adding edges to $G$ can also make finding impressions easier.
In fact, for $G$ planar, it is convenient for inductive arguments that we can also add new connected sets to $\mathcal{H}$ in a controlled way so that our final impression can be obtained from a related impression.

\begin{lemma}\label{lem:impression2}
    Let $(G,\mathcal{M})$ be a $(x,y)$-impression of $(G,\mathcal{H}')$.
    If $\mathcal{H}\subseteq \mathcal{H}'$ has the property that for every $H\in \mathcal{H'} \backslash \mathcal{H}$ we have that $I_{\mathcal{H}}(H)$ has weak diameter at most $k$ in $\textbf{RIG}(G,\mathcal{H})$ and $V(G)=\bigcup_{H\in \mathcal{H}} V(H)$, then $(G,\mathcal{M})$ is a $(kx,y)$-impression of $\textbf{RIG}(G,\mathcal{H})$.
\end{lemma}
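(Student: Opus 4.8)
The plan is to verify the three defining conditions of a $(kx,y)$-impression for $(G,\mathcal{M})$ with respect to $(G,\mathcal{H})$, two of which are essentially immediate. The condition $\bigcup_{M\in\mathcal{M}}M=V(G)$ holds because $(G,\mathcal{M})$ is a $(x,y)$-impression of $(G,\mathcal{H}')$, and the requirement that $I_{\mathcal{M}}(H)$ have weak diameter at most $y$ in $\textbf{IM}(G,\mathcal{M})$ is only needed for $H\in\mathcal{H}\subseteq\mathcal{H}'$, where it is a special case of the same requirement for $(G,\mathcal{H}')$. So the whole content is in showing that for every $M\in\mathcal{M}$, the set $I_{\mathcal{H}}(M)$ has weak diameter at most $kx$ in $\textbf{RIG}(G,\mathcal{H})$. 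I would reduce this to the metric comparison that $d_{\textbf{RIG}(G,\mathcal{H})}(H_1,H_2)\le k\cdot d_{\textbf{RIG}(G,\mathcal{H}')}(H_1,H_2)$ for all $H_1,H_2\in\mathcal{H}$ (we may assume $k\ge 1$, which is the only meaningful regime): indeed, for $M\in\mathcal{M}$ and $H_1,H_2\in I_{\mathcal{H}}(M)\subseteq I_{\mathcal{H}'}(M)$ the impression hypothesis gives $d_{\textbf{RIG}(G,\mathcal{H}')}(H_1,H_2)\le x$, and hence $d_{\textbf{RIG}(G,\mathcal{H})}(H_1,H_2)\le kx$, which is exactly the first condition.

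To prove the comparison I would take a walk $F_0,\dots,F_d$ of length $d=d_{\textbf{RIG}(G,\mathcal{H}')}(H_1,H_2)$ in $\textbf{RIG}(G,\mathcal{H}')$ between $H_1$ and $H_2$, and reroute it inside $\textbf{RIG}(G,\mathcal{H})$. Note first that each $F\in\mathcal{H}'\setminus\mathcal{H}$ has $I_{\mathcal{H}}(F)\neq\emptyset$: $F$ is nonempty, and any vertex of $F$ lies in some member of $\mathcal{H}$ by the hypothesis $V(G)=\bigcup_{H\in\mathcal{H}}V(H)$. The indices $i$ with $F_i\in\mathcal{H}'\setminus\mathcal{H}$ form maximal runs, and since $F_0,F_d\in\mathcal{H}$ every run $\{a,\dots,b\}$ is flanked by $F_{a-1},F_{b+1}\in\mathcal{H}$. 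For each edge $F_iF_{i+1}$ pick $v_i\in F_i\cap F_{i+1}$ and, using the covering hypothesis, a set $W_i\in\mathcal{H}$ with $v_i\in W_i$. Along a run of length $\ell$, the sequence $F_{a-1},W_a,W_{a+1},\dots,W_{b-1},F_{b+1}$ has the property that each pair of consecutive terms lies together in $I_{\mathcal{H}}(F_j)$ for a suitable index $j$ of the run (here one uses that $v_{i-1},v_i\in F_i$), and $I_{\mathcal{H}}(F_j)$ has weak diameter at most $k$; so consecutive terms are at distance at most $k$ in $\textbf{RIG}(G,\mathcal{H})$, and $F_{a-1}$ and $F_{b+1}$ are joined there by a walk of length at most $k\ell$. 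Edges $F_iF_{i+1}$ of the original walk with both ends in $\mathcal{H}$ are already edges of $\textbf{RIG}(G,\mathcal{H})$ and are kept unchanged. Since a run of length $\ell$ occupies $\ell+1$ of the $d$ edges, and these run-spans together with the retained edges partition all $d$ edges, summing the bounds and using $k\ge 1$ (so that the retained-edge count is at most $k$ times itself and $k\ell\le k(\ell+1)$) produces a walk in $\textbf{RIG}(G,\mathcal{H})$ of length at most $kd$, as wanted.

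The only real obstacle I anticipate is the bookkeeping in this last step: confirming that on such a walk the maximal runs are genuinely flanked by members of $\mathcal{H}$, that the run-spans and the retained edges form an edge partition of the walk, and that each rerouting has length at most $k$ times the span it replaces. No idea beyond the weak-diameter bound on the sets $I_{\mathcal{H}}(F)$ for $F\in\mathcal{H}'\setminus\mathcal{H}$ together with the covering hypothesis is needed; the rest is formal.
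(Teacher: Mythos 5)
Your argument is correct and follows the same route the paper uses, which the paper states in a single sentence without spelling out the rerouting: both hinge on turning a length-$\le x$ path in $\textbf{RIG}(G,\mathcal{H}')$ between two members of $I_{\mathcal{H}}(M)$ into a walk of length $\le kx$ in $\textbf{RIG}(G,\mathcal{H})$ by replacing the runs of $\mathcal{H}'\setminus\mathcal{H}$ vertices using the covering hypothesis and the weak-diameter bound on the sets $I_{\mathcal{H}}(F)$. Your version is just a more explicit write-up of the bookkeeping the paper leaves to the reader, and the two easy conditions are handled identically.
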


\begin{proof}
    For each $H\in \mathcal{H}\subseteq \mathcal{H}'$, we have that $I_\mathcal{M}(H)$ has weak diameter at most $y$ in $\textbf{IM}(G,\mathcal{M})$ since $(G,\mathcal{M})$ is a $(x,y)$-impression of $\textbf{RIG}(G,\mathcal{H}')$.
    For $M\in \mathcal{M}$, we have that $I_{\mathcal{H}'}(M)$ has weak diameter at most $x$ in $\textbf{RIG}(G,\mathcal{H}')$.
    Since $V(G)=\bigcup_{H\in \mathcal{H}} V(H)$ and $I_{\mathcal{H}}(H)$ has weak diameter at most $k$ in $\textbf{RIG}(G,\mathcal{H})$ for each $H\in \mathcal{H'} \backslash \mathcal{H}$, it further follows that $I_{\mathcal{H}}(M)$ has weak diameter at most $kx$ in $\textbf{RIG}(G,\mathcal{H})$.
    Hence $(G,\mathcal{M})$ is a $(kx,y)$-impression of $\textbf{RIG}(G,\mathcal{H})$, as desired.
\end{proof}

It is straight forward to make the quasi-isometry $f$ in \cref{prop:impression} into a bijective map by taking a minor and possibly adding some pendent edges.
Note in particular that for countable graphs this preserves planarity.

\begin{lemma}\label{lem:quasibi}
    Let $G,H$ be graphs and let $f:V(G)\to V(H)$ be a function such that for all $u,v\in V(G)$, we have
    \[
    \frac{1}{x_1}d_G(u,v) - \frac{x_2}{x_1}
    \le 
    d_H(f(u), f(v))
    \le 
    x_3d_G(u,v),
    \]
    and for every $v\in V(H)$ there exists some $u\in V(G)$ such that $d_H(f(u),v) \le x_4$.

    Then there exists some graph $H'$ obtained from $H$ by adding vertices adjacent to individual vertices of $V(H)$ and then taking a minor, and some bijection $f':V(G)\to V(H')$ such that for all $u,v\in V(G)$, we have
    \[
    \frac{1}{2x_4(x_1 + x_2)}
    d_G(u,v)
    \le 
    d_{H'}(f'(u), f'(v))
    \le 
    (x_3 +2)
    d_G(u,v).
    \]
\end{lemma}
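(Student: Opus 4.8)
The plan is to repair the two ways in which $f$ fails to be a bijection — non‑surjectivity and non‑injectivity — by a contraction and an enlargement respectively. For surjectivity I would first note that $R:=f(V(G))$ is $x_4$‑dense in $H$ (immediate from the covering hypothesis together with $d_H(f(u),f(u))=0$), and run a breadth‑first search from $R$: for each $w\in V(H)\setminus R$ pick a neighbour $p(w)$ with $d_H(p(w),R)=d_H(w,R)-1$, and iterate $p$ to assign every vertex of $H$ to a root $r(w)\in R$. The fibres $C_{w_0}:=r^{-1}(w_0)$ for $w_0\in R$ then partition $V(H)$ into connected sets (the $p$‑edges spanning a tree inside each fibre), each containing its root $w_0$ and satisfying $d_H(v,w_0)\le x_4$ for all $v\in C_{w_0}$. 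Contracting the $C_{w_0}$ already produces a surjection, but not an injection when $f$ is not injective.

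To handle injectivity I would \emph{enlarge} $H$ before contracting. Writing $f^{-1}(w)=\{u^{w}_{1},\dots,u^{w}_{k_w}\}$ for each $w\in R$, form $H_1$ from $H$ by adding, for every $w$ and every $2\le j\le k_w$, a pendant vertex $v_{w,j}$ adjacent to $w$; since $H$ is countable this keeps planarity, and the later contraction does too (this is precisely the ``add pendants, then take a minor'' operation recorded just before the statement). Now set $B_{u^{w}_{1}}:=C_{w}$ and $B_{u^{w}_{j}}:=\{v_{w,j}\}$ for $j\ge 2$. One checks that the $B_u$, $u\in V(G)$, partition $V(H_1)$, that each is connected in $H_1$, and that each $B_u$ contains a distinguished vertex $\widehat f(u)$ (namely $w$ if $u=u^{w}_{1}$, else $v_{w,j}$) with $d_{H_1}(\widehat f(u),f(u))\le 1$. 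Let $H'$ be the minor of $H_1$ obtained by contracting every $B_u$, and let $f'(u)\in V(H')$ be the vertex it collapses to; then $f'$ is a bijection $V(G)\to V(H')$ and $H'$ has the required form.

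What remains is a routine metric check. For the upper bound, a shortest $H_1$‑path avoids pendants except possibly at its ends, so $d_{H'}(f'(u),f'(v))\le d_{H_1}(\widehat f(u),\widehat f(v))\le d_H(f(u),f(v))+2\le x_3 d_G(u,v)+2$, which is at most $(x_3+2)d_G(u,v)$ once $d_G(u,v)\ge 1$ (the case $d_G(u,v)=0$ being handled by injectivity of $f'$). For the lower bound, take a shortest $H'$‑path $f'(u)=B_{a_0},\dots,B_{a_\ell}=f'(v)$; for each $i$ there is an edge of $H_1$ between $B_{a_i}$ and $B_{a_{i+1}}$, and a two‑case split bounds $d_G(a_i,a_{i+1})$: either both branch sets are cells, whence their roots satisfy $d_H(f(a_i),f(a_{i+1}))\le 2x_4+1$ and so $d_G(a_i,a_{i+1})\le x_1(2x_4+1)+x_2$ by hypothesis; or one of them is a pendant singleton, whose unique $H_1$‑neighbour forces $f(a_i)=f(a_{i+1})$ and hence $d_G(a_i,a_{i+1})\le x_2$. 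Summing along the path gives $d_G(u,v)\le\bigl(x_1(2x_4+1)+x_2\bigr)d_{H'}(f'(u),f'(v))$, and since $x_1(2x_4+1)+x_2\le 2x_4(x_1+x_2)$ (equivalently $x_1+x_2\le 2x_4x_2$, which holds for the parameters arising here, e.g. whenever $x_4\ge 1$ and $x_1\le x_2$), the stated lower bound follows.

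I expect the load‑bearing step to be the construction in the second paragraph: a vertex cannot be split by a contraction, so the pendant vertices must be introduced before any contraction, and one must simultaneously verify that the $B_u$ partition $V(H_1)$, that each is connected, and that each is nonempty (meets $\widehat f(V(G))$). Once this branch decomposition is in place, both distance estimates are straightforward triangle‑inequality bookkeeping.
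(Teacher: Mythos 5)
Your construction is essentially the same as the paper's: root a spanning forest of $H$ at $R=f(V(G))$ with components of radius at most $x_4$, add pendants at each root $w$ to account for $|f^{-1}(w)|>1$, then contract the forest components. The upper-bound estimate also matches. Where you genuinely diverge is in the lower-bound accounting, and this is where your argument has a gap. The paper first proves the single inequality $\tfrac{1}{2x_4}\,d_H(f(u),f(v))\le d_{H'}(f'(u),f'(v))$ and only then translates to $G$-distances, using the elementary observation that $\tfrac{1}{x_1+x_2}\,t\le\max\bigl\{\tfrac{1}{x_1}t-\tfrac{x_2}{x_1},\,1\bigr\}$ together with the fact that $d_{H'}(f'(u),f'(v))\ge 1$ whenever $u\ne v$ (because $f'$ is a bijection). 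This way the additive error $x_2$ is absorbed exactly once, and the argument closes for all values of $x_1,x_2,x_3,x_4$. You instead bound $d_G(a_i,a_{i+1})\le x_1(2x_4+1)+x_2$ at \emph{every} step of a shortest $H'$-path and sum, which pays the additive $x_2$ a number of times proportional to $d_{H'}$, and therefore requires the extraneous inequality $x_1(2x_4+1)+x_2\le 2x_4(x_1+x_2)$, i.e.\ $x_1\le(2x_4-1)x_2$. That side condition is not among the lemma's hypotheses, and the sufficient condition you offer for it, ``$x_4\ge 1$ and $x_1\le x_2$,'' in fact fails in the paper's own application (\cref{thm:stringquasi} invokes the lemma with $x_1=73944>73936=x_2$); the inequality $x_1\le(2x_4-1)x_2$ does hold there for the different reason that $2x_4-1=159$ is large, but as written your justification is wrong. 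To repair this, replace the per-step summation with the paper's bijectivity trick: derive $d_G(u,v)\le x_1\,d_H(f(u),f(v))+x_2$ once, combine it with the bound relating $d_H$ and $d_{H'}$, and then use $d_{H'}(f'(u),f'(v))\ge 1$ for $u\ne v$ together with the displayed $\max$-inequality.
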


\begin{proof}
    Let $A=f(V(G))$.
    Since for every $v\in V(H)$ there exists some $u\in V(G)$ such that $d_H(f(u),v) \le x_4$, we can choose a forest $F$ contained in $H$ as a subgraph with $V(F)=V(H)$ such that every connected component of $F$ contains exactly one vertex of $A$ and every vertex of $F$ is at distance at most $x_4$ from $A$ in $F$.
    Let $H^*$ be obtained from $H$ by for each $a\in A$, adding $|f^{-1}(a)|-1$ vertices to $H$ that are adjacent to $a$ only.
    Choose $f':V(G)\to V(H^*)$ so that for every $a\in A$, $f^{-1}(a)$ is mapped bijectivly to $a$ and the newly added $|f^{-1}(a)|-1$ vertices adjacent to $a$.
    Let $H'$ be obtained from $H^*$ by contracting the edges of $F$ onto $A$.
    Then $f'$ is a bijection between $G$ and $H'$.

    For every $u,v\in V(G)$, we have
    \[
    \frac{1}{2x_4}
    d_{H}(f(u),f(v))
    \le 
    d_{H'}(f'(u), f'(v))
    \le 
    d_{H}(f(u),f(v)) +2.
    \]
    For distinct $u,v\in V(G)$, we clearly have $d_{H}(f(u),f(v)) +2 \le 3d_{H}(f(u),f(v))
    \le x_3d_G(u,v) +2
    \le (x_3+2)d_G(u,v) 
    $.
    For $t\ge 0$, we have that 
    $
    \frac{1}{x_1+x_2}t
    \le 
    \max\{
    \frac{1}{x_1}t-\frac{x_2}{x_1},
    1\}$.
    Since $f'$ is bijective, it therefore follows that $\frac{1}{2x_4(x_1 + x_2)} d_G(u,v) \le d_{H'}(f'(u), f'(v))$.
\end{proof}

\cref{thm:stringmain} for finite string graphs will be quickly deduced in \cref{sec:string} from our main technical result (\cref{thm:stringinduct}) by applying \cref{lem:impression2}, \cref{prop:impression}, and \cref{lem:quasibi} in order.

Next we show that \cref{thm:stringmain} implies \cref{thm:metricmain}.
This is immediate from the following lemma. 

\begin{lemma}\label{Lem:metrictostring}
    Let $H$ be a $(0,1]$-metric planar graph.
    Then there is a string graph $S$ with $V(S)=V(H)$ such that
    \[
    \frac{1}{2}d_H(u,v)
    \le
    d_S(u,v)
    \le
    d_H(u,v) + 1
    \]
    for all $u,v\in V(H)$.
\end{lemma}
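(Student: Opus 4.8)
The plan is to exhibit an explicit string representation of a graph $S$ built directly from the planar embedding of $H$. Fix a plane drawing of $H$ (as an abstract graph, ignoring the edge lengths). For each vertex $v\in V(H)$ I would take a small "star" region $\sigma_v$ that consists of a tiny disc around the point where $v$ is drawn, together with, for each edge $uv\in E(H)$, a thin tentacle running along (a sub-arc of) the drawn arc of $uv$, extending from $v$ roughly the fraction of the way towards $u$ equal to the length $\ell(uv)\le 1$ — say, all the way but stopping just short of $\sigma_u$, since $\ell(uv)\le 1$ anyway. The key point: make the tentacles of $\sigma_v$ and $\sigma_u$ along the arc of $uv$ overlap (so $\sigma_u\cap\sigma_v\neq\emptyset$ iff $uv\in E(H)$), and make all these regions path-connected and pairwise non-crossing except at the intended intersections, which is possible because we are only thickening a plane drawing. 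This yields a string graph $S$ with $V(S)=V(H)$ and $E(S)=E(H)$; in fact $S$ is just $H$ as an abstract graph. So the content of the lemma is not really about the string structure at all — it is the elementary metric comparison between the unit-length metric $d_S=d_H^{(1)}$ (all edges length $1$) and the given $(0,1]$-metric $d_H$ on the \emph{same} graph.

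With that reduction in hand, the proof is two one-line estimates on the common graph $H$. For the upper bound: a geodesic from $u$ to $v$ in the unit-weight graph has some number $k=d_S(u,v)$ of edges; since each edge has $\ell\le 1$, its $d_H$-length is at most $k=d_S(u,v)\le d_H(u,v)+1$? No — I have the direction backwards, so let me state it cleanly. We want $d_S(u,v)\le d_H(u,v)+1$: take a $d_H$-geodesic path $P$ from $u$ to $v$; it uses $m$ edges, and since every edge has length $>0$ but could be tiny, $m$ can be much larger than $d_H(u,v)$, so this does not immediately work and I must instead bound $m$. The right move is: a $d_H$-shortest path need not be a simple path in general for metric graphs, but among $d_H$-geodesics between two vertices we may choose one that is a simple path, hence uses at most... no, that bounds $m\le |V(H)|$, not good enough. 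The correct and standard argument: since each edge length is in $(0,1]$, consecutive vertices along \emph{any} walk are at $d_H$-distance at most $1$, so a $d_H$-geodesic, \emph{re-traversed greedily}, gives $d_S(u,v)\le \lceil d_H(u,v)\rceil \le d_H(u,v)+1$; concretely, walk along the $d_H$-geodesic and record the vertices, noting that one can always make progress of $d_H$-length at least... Here is the clean statement I will use: if $x,y\in V(H)$ with $d_H(x,y)\le 1$ then $xy\in E(H)$ or $d_S(x,y)=1$ is false in general, so instead I will argue directly that $d_S(u,v)\le d_H(u,v)+1$ by covering a $u$–$v$ geodesic arc (a curve in the length space) by at most $d_H(u,v)+1$ closed unit-length sub-arcs each of whose endpoints, pushed to the nearest vertex, are adjacent or equal. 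This is the step that needs the most care and is the main obstacle: making precise that a length-$L$ geodesic in a $(0,1]$-metric graph induces a vertex-walk of at most $L+1$ edges. The idea is to parametrise the geodesic by arclength, look at the times $0, 1, 2, \dots, \lfloor L\rfloor, L$, and observe that each consecutive pair of these points lies within a single edge or within two edges sharing a vertex, because the geodesic cannot re-enter an edge it has left (geodesics in metric graphs traverse each edge at most once, monotonically), so between two sample points at $d_H$-distance $\le 1$ the geodesic passes through at most one full internal vertex; rounding each sample point to an incident vertex then produces a walk of the claimed length.

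For the lower bound $\tfrac12 d_H(u,v)\le d_S(u,v)$: take a $d_S$-geodesic $v=w_0 w_1\cdots w_k=u$ in $H$ with $k=d_S(u,v)$ edges; each edge $w_iw_{i+1}$ has $d_H$-length at most $1$, so $d_H(u,v)\le \sum_i \ell(w_iw_{i+1})\le k=d_S(u,v)$, which in fact gives the stronger $d_H(u,v)\le d_S(u,v)$, so the factor $\tfrac12$ is not even needed — but since the lemma only claims $\tfrac12 d_H(u,v)\le d_S(u,v)$, this certainly suffices (the slack presumably comes from wanting a clean statement compatible with the subdivision reduction mentioned before the lemma, where an edge of $H$ of length slightly more than an integer gets subdivided and one extra unit may appear). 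Assembling: the upper estimate $d_S\le d_H+1$ from paragraph two and the lower estimate $d_H\le d_S$ (a fortiori $\tfrac12 d_H\le d_S$) from this paragraph together give the displayed inequalities. I expect the only genuinely non-trivial point to be the "geodesics traverse each edge monotonically at most once" fact used in the upper bound; everything else is bookkeeping, and the string-graph construction is a routine thickening of a fixed plane drawing of $H$.
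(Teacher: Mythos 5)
Your construction makes $S$ isomorphic to $H$ as an abstract graph ($E(S)=E(H)$), and this is precisely what breaks the upper bound. Consider $H$ a path $u=v_0\,v_1\,\cdots\,v_n=v$ with every edge of length $1/n$: then $d_H(u,v)=1$ but $d_S(u,v)=n$, which violates $d_S\le d_H+1$ for $n\ge 3$. You yourself flagged this danger and then tried to patch it, but the patch is where the error lives: the claim that ``between two sample points at $d_H$-distance $\le 1$ the geodesic passes through at most one full internal vertex'' is false --- in the example above, arclength~$1$ of the geodesic passes through $n-1$ internal vertices, because edge lengths may be arbitrarily small (the hypothesis is $(0,1]$, with no positive lower bound). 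The greedy re-traversal idea you sketch would only work if consecutive sample points (at arclength roughly $1$ or $2$ apart) were guaranteed to be adjacent in $S$, and with $E(S)=E(H)$ they are not.

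The paper's construction sidesteps this entirely by choosing a \emph{different} $S$: take the given plane drawing of $H$ and represent each vertex $u$ by the radius-$1$ ball $S_u$ around $u$ in the intrinsic metric of $H$ (a path-connected subset of the drawing). Then $S_u\cap S_v\neq\emptyset$ iff $d_H(u,v)\le 2$, so $S$ has strictly more edges than $H$. Now the upper bound follows exactly by your sampling idea: pick vertices $x_0=u,x_1,\dots,x_t=v$ along a $d_H$-geodesic at arclength increments in $(1,2]$ (possible since edges have length $\le 1$), giving $t\le\lfloor d_H(u,v)\rfloor$; each consecutive pair is at $d_H$-distance $\le 2$, hence adjacent in $S$, so $d_S(u,v)\le t\le d_H(u,v)$, with $+1$ absorbing the degenerate case $d_H(u,v)\le 1$. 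The lower bound is where the factor $\tfrac12$ enters (and is needed, unlike in your version): a $d_S$-geodesic of length $k$ has consecutive vertices at $d_H$-distance $\le 2$, giving $d_H(u,v)\le 2k=2\,d_S(u,v)$. So the key idea you are missing is to inflate the vertex regions so that nearby-but-not-adjacent vertices of $H$ become adjacent in $S$; without this, no rounding or sampling argument can recover the upper bound.
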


\begin{proof}
    Consider $H$ embedded in the plane.
    For each $u\in V(H)$, let $S_u$ be the subset of the plane consisting of all points of $H$ at distance at most 1 in $H$ from $u$.
    Let $S$ be the string graph with vertex set $V(H)$ in which each $u\in V(S)=V(H)$ is represented by $S_u$.
    Consider $u,v\in V(H)$
    and let $P$ be a geodesic path from $u$ to $v$ in $H$.
    Choose vertices $x_0,\ldots ,x_t$ of $H$ along $P$ so that $x_0=u$, $x_t=v$ and for each $0\le i <t$, the distance along $P$ between $x_i$ and $x_{i+1}$ is in $[1,2]$ (or if $d_H(u,v) < 1$, we just take $x_0=u$, $x_1=v$).
    So, $\lceil \frac{1}{2}d_H(u,v) \rceil \le  t\le \lfloor d_H(u,v) \rfloor$.
    The lemma now follows since $x_i$ is adjacent to $x_{i+1}$ in $S$ for each $0\le i <t$.
\end{proof}

We remark that with a very similar proof to \cref{Lem:metrictostring}, it is easy to extend \cref{thm:stringmain} to $(0,1]$-metric string graphs (again with slightly worse bounds).

For our inductive argument, it is more convenient to prove \cref{thm:stringmain} for finite string graphs.
\cref{sec:outer}, \cref{sec:encase}, and \cref{sec:string} are dedicated to proving \cref{thm:stringmain} for finite string graphs (\cref{thm:stringmainfinite} below), which in particular, will be quickly deduced in \cref{sec:string} from \cref{thm:stringinduct} (which is a more technical result tailored for an inductive argument) by applying \cref{lem:impression2}, \cref{prop:impression}, and \cref{lem:quasibi} in order.

\begin{theorem}\label{thm:stringmainfinite}
    Let $S$ be a finite string graph. Then there is a planar graph $G$ with $V(G)=V(S)$ such that 
    \[
    \frac{1}{23660800}d_S(u,v)
    \le
    d_G(u,v)
    \le
    162
    d_S(u,v)
    \]
    for all $u,v\in V(S)$.
\end{theorem}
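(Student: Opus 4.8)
The plan is to reduce \cref{thm:stringmainfinite} to the main technical result \cref{thm:stringinduct} by following the roadmap already announced in the excerpt: apply \cref{lem:impression2}, then \cref{prop:impression}, then \cref{lem:quasibi}, in that order. So the real content is to set up the right region intersection model and run the induction that produces an impression. Since every finite string graph is $\textbf{RIG}(G_0,\mathcal{H}_0)$ for some finite planar graph $G_0$ (by Lee's theorem, as recalled in \cref{Sec:Pre}), I would first fix such a representation, and after deleting edges of $G_0$ not lying inside any $G_0[H]$ I may assume $\mathcal{H}_0$ spans $G_0$, i.e.\ $z=0$. The goal is then to construct a $(x,y)$-impression $(G,\mathcal{M})$ of $(G,\mathcal{H})$ for some enlarged planar graph $G\supseteq G_0$ and enlarged family $\mathcal{H}\supseteq \mathcal{H}_0$, with $x,y$ absolute constants; applying \cref{lem:impression2} pulls this back to a $(kx,y)$-impression of $(G_0,\mathcal{H}_0)$ with $k$ absolute, then \cref{prop:impression} produces a quasi-isometry $f$ from $S=\textbf{RIG}(G_0,\mathcal{H}_0)$ to the induced minor $\textbf{IM}(G_0,\mathcal{M})$ of a planar graph (hence planar), and \cref{lem:quasibi} turns $f$ into a bijection onto a planar graph $G$ with $V(G)=V(S)$ at the cost of worsening the constants by bounded factors. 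Tracking the constants through these three lemmas is what yields the explicit $\tfrac{1}{23660800}$ and $162$.

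The heart of the matter — and the main obstacle — is producing the impression, which is the delicate induction occupying \cref{sec:outer,sec:encase,sec:string}. Here is how I would organize it. First, handle the outerstring case: if all strings touch the outer face, one expects a much cleaner structure, and a weakened version of the main theorem (\cref{thm:outerstring}) should follow by a more direct planar/laminar argument, essentially because outerstring graphs are close to interval-like objects around the boundary circle. Second, use the outerstring result as a black box inside a decomposition of a general string graph: cut the plane along a carefully chosen Jordan arc or a bounded-size family of arcs (guided by a separator-type or nested-disc argument) so that the pieces are either ``outerstring-like'' relative to the cut or are strictly smaller instances to which induction applies; the technical device packaging this is the ``encasing'' of \cref{lem:encase4}. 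Third, glue the impressions of the pieces: the sets $M\in\mathcal{M}$ are built as unions of vertex sets coming from the pieces plus controlled connector sets living near the cutting arcs, and one must verify the two weak-diameter bounds — that $I_{\mathcal{H}}(M)$ has bounded weak diameter in $\textbf{RIG}$ and that $I_{\mathcal{M}}(H)$ has bounded weak diameter in $\textbf{IM}$ — survive the gluing with only an additive/multiplicative constant loss, using that the interfaces between pieces are ``thin''. Planarity of $G$ must be maintained throughout: any edges or vertices we add to pass from $G_0$ to $G$ have to be drawable in the faces created by the cuts, which is exactly why the framework permits enlarging $\mathcal{H}$ (via \cref{lem:impression2}) rather than insisting $\mathcal{H}$ span $G_0$ from the start.

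I expect the genuinely hard step to be the inductive gluing in \cref{sec:string}: ensuring that a string $H$ that is split by the cuts into many fragments, each living in a different piece, still has all its incident $M$'s at bounded distance in the final $\textbf{IM}$ graph. This forces the cutting arcs to be chosen so that no string crosses them ``too often'' in a way that matters, or so that crossings are absorbed into the connector sets; balancing this against the requirement that each piece be genuinely simpler (so the induction terminates) is the crux, and is presumably why the bookkeeping constant $23660800$ is so large. A secondary subtlety is the passage from finite to countable graphs in \cref{thm:stringmain} itself — a compactness or exhaustion argument over finite induced subgraphs — but that is outside the scope of \cref{thm:stringmainfinite} as stated. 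Once the impression with absolute constants $x,y,k$ is in hand, the rest is the deterministic constant-chase through \cref{prop:impression} and \cref{lem:quasibi}, and verifying it lands exactly on the claimed bounds.
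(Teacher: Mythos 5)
Your proposal follows essentially the same route as the paper: deduce \cref{thm:stringmainfinite} from the impression supplied by \cref{thm:stringinduct}, via the chain \cref{lem:impression2}, \cref{prop:impression}, \cref{lem:quasibi}, tracking the constants through each step (and indeed $2\cdot 80\cdot(73944+73936)=23660800$ and $160+2=162$). Your sketch of how \cref{thm:stringinduct} itself is proved is broadly in the right spirit — outerstring base case, encasings, inductive peeling — though the mechanism is not really a family of cutting arcs: the paper removes the vertex sets of an encasing $\mathcal{B}$ (a $c$-surround and $d$-cage containing all boundary-touching strings), adds new sets $R_{F,H}$ for the ``encroached'' facial vertices together with fortification edges, and recurses on the strictly smaller interior.
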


We finish this section by showing that \cref{thm:stringmainfinite} implies \cref{thm:stringmain}.

\begin{proof}[Proof of \cref{thm:stringmain} assuming \cref{thm:stringmainfinite}.]
    Clearly it is enough to consider the case that $S$ is connected.
    Let $V(S)=\{x_k:k\ge 1\}$ and for each $i\le 1$, let $X_i = \{x_k: 1\le k \le i\}$.
    For each $i\ge 1$, let $\mathcal{G}_i$ be the collection of embedded planar graphs $G'$ that are topologically inequivelant after possibly relabeling vertices of $V(G')\backslash X_i$,  such that $X_i\subseteq V(G')\subseteq V(S)$, $|V(G')| \le i + \sum_{x,y\in X_i} 162d_S(x,y)$, and for every $x,y\in X_i$, we have that 
    \[
    \frac{1}{23660800}d_S(x,y)
    \le
    d_{G'}(x,y)
    \le
    162
    d_S(x,y).
    \]
    Note that each $\mathcal{G}_i$ is non-empty by \cref{thm:stringmainfinite} (applied to some $S[Y]$ for some $Y$ with $X_i\subseteq Y$, $|Y|\le |X_i| + \sum_{x,y\in X_i} d_S(x,y)$ such that $d_{S[Y]}(x,y)=d_S(x,y)$ for all $x,y\in X_i$).
    Also, each $|\mathcal{G}_i|$ is finite.
    Let $T$ be the graph with vertex set $\bigcup_{i=1}^\infty \mathcal{G}_i$ where each $G''\in \mathcal{G}_{i+1}$ is adjacent to some $G'\in \mathcal{G}_i$ such that $G'$ is topologically equivalent to a restriction of $G''$ after possibly relabeling vertices of $V(G'')\backslash X_i$.
    Observe that by considering shortest paths between vertices of $X_i$, such a $G'$ always exists.

    By Kőnig's infinity lemma \cite{konig1927schlussweise}, $T$ has an infinite path $G_1,G_2,\ldots$ in which $G_i\in \mathcal{G}_i$ for each $i\ge 1$.
    Note that for each $i\ge 1$, there exists some $R$ so that $V(G_i)\subseteq V(G_r)$ for all $r\ge R$.
    We obtain the desired countable planar graph $G$ by iteratively extending an embedded planar graph topically equivalent to $G_i$ to one topologically equivalent to $G_{i+1}$ for each $i\ge 1$ in order.
\end{proof}

\section{Outerstring graphs}\label{sec:outer}

Outerstring graphs are equivalent to region intersection graphs $\textbf{RIG}(G,\mathcal{H})$ for which $G$ is planar and every $H\in \mathcal{H}$ contains a vertex of the outer face of $G$.
They can be thought of as an ``outerplanar'' analogue of string graphs, and are thus much simpler.
The main aim of this section is to prove the following theorem on finding certain restricted impressions of such region intersection graphs.
This will be crucial to our arguments on string graphs (or strictly speaking, \cref{lem:outerstring} and \cref{lem:outerplanar} which easilly imply the following theorem will be).

\begin{theorem}\label{thm:outerstring}
    Let $G$ be a finite planar graph and $\mathcal{H}$ be a spanning collection of connected vertex sets in $G$, each of which contains a vertex incident to the outer face of $G$.
    Then there is a $(770,9)$-impression $(G, \mathcal{M})$ of $(G,\mathcal{H})$ such that each set in $\mathcal{M}$ contains a vertex incident to the outer face of $G$.
\end{theorem}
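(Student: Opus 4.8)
The plan is to prove Theorem~\ref{thm:outerstring} by building the impression $\mathcal{M}$ directly from a planar drawing of $G$, using the outerface structure as the organizing principle. First I would fix a planar embedding of $G$ and let $C$ be the boundary walk of the outer face; every $H \in \mathcal{H}$ meets $C$ by hypothesis. The key intuition is that $\textbf{RIG}(G,\mathcal{H})$ behaves like a ``cyclic'' or ``interval-like'' object: since each string $H$ touches the outer boundary, distances in $\textbf{RIG}(G,\mathcal{H})$ are controlled by how the strings wrap around and nest relative to $C$. I would assign to each vertex $v \in V(G)$ a canonical ``anchor'' — the nearest vertex of $C$ reachable in $G$ — together with the witnessing shortest path, and then group vertices into regions $M$ according to roughly which arc of the outer cycle $C$ their anchors lie in, refined by a BFS-style layering from $C$. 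Concretely, I expect the construction to first partition $C$ itself into arcs of bounded ``$\textbf{RIG}$-length'' (i.e.\ so that the strings meeting one arc have bounded weak diameter), then extend each arc into the interior of the disk bounded by $C$ via the anchor paths, so that $\bigcup_{M} M = V(G)$ and each $M$ meets the outer face.

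**The two weak-diameter bounds.**

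The first bullet of the impression definition — that $I_{\mathcal{H}}(M)$ has weak diameter at most $770$ in $\textbf{RIG}(G,\mathcal{H})$ — is where the bulk of the work lies, and this is the step I expect to be the main obstacle. The point is: if $H_1, H_2$ both meet $M$, I must bound $d_{\textbf{RIG}(G,\mathcal{H})}(H_1,H_2)$. Because $M$ is a connected region lying ``above'' a bounded arc $\alpha$ of $C$, both $H_1$ and $H_2$ can be routed down to strings that meet $\alpha$; but a string meeting a short arc of $C$ need not itself be short in $\textbf{RIG}$ (it could be a long thin string hugging the boundary). The resolution should use the outerplanar structure crucially: strings meeting a common bounded arc of the outer cycle, together with the fact that $\mathcal{H}$ spans $G$, force a bounded-length path in $\textbf{RIG}(G,\mathcal{H})$ — essentially a ``following the boundary'' argument, where consecutive strings along the arc overlap. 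This is presumably exactly what the auxiliary \cref{lem:outerstring} and \cref{lem:outerplanar} (mentioned but not yet stated) are designed to deliver, and the constant $770$ is the accumulated cost of the arc length times the overlap/spanning factors. The second bullet — $I_{\mathcal{M}}(H)$ has weak diameter at most $9$ in $\textbf{IM}(G,\mathcal{M})$ — should be comparatively easy: a single string $H$, being connected and meeting the outer face, can cross only boundedly many of the regions $M$ because the regions are laid out along $C$ in a linear (or cyclic) fashion and $H$'s trace on $C$ is connected-ish after accounting for the anchor-path extensions; so $H$ touches a short interval of consecutive regions, giving a small constant like $9$ once one carefully counts the layers plus the arc-spread.

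**Assembling the impression and finishing.**

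Once the regions $\mathcal{M}$ are defined with both weak-diameter properties and $\bigcup_{M \in \mathcal{M}} M = V(G)$, the triple $(G,\mathcal{M})$ is by definition a $(770,9)$-impression of $(G,\mathcal{H})$, and by construction each $M$ contains a vertex incident to the outer face of $G$ (it is built by extending an arc of $C$). I would organize the write-up so that all the combinatorial content is isolated in the two lemmas on outerstring/outerplanar graphs, with Theorem~\ref{thm:outerstring} following in a few lines by combining them: \cref{lem:outerstring} handling the $\mathcal{H}$-side weak diameter and \cref{lem:outerplanar} the $\mathcal{M}$-side. The subtle points to get right are (i) the cyclic-versus-linear issue — cutting the outer cycle at one point to make the arcs a genuine partition, and checking the wrap-around does not blow up constants by more than a factor of two; (ii) ensuring the anchor-path extension keeps each $M$ connected in $G$; and (iii) verifying that spanning is genuinely used — without it a string could sit in the interior far from any boundary-hugging string in $\textbf{RIG}$-distance, so the layering from $C$ must be compatible with the $0$-spanning hypothesis. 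I would double-check the arithmetic producing $770$ and $9$ at the very end, since those exact constants feed into the global induction of \cref{thm:stringmain}.
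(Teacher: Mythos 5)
Your proposed construction — anchor each vertex to its nearest boundary vertex, cut $C$ into arcs of bounded $\textbf{RIG}$-length, and take $M_j$ to be the arc $\alpha_j$ together with everything anchored to it — has two genuine gaps. For the $\mathcal{H}$-side bound, a string $H$ can intersect $M_j$ only deep in the interior, and the anchor path from that intersection back to $\alpha_j$ may be arbitrarily long in $G$; since the spanning hypothesis only converts graph-distance into $\textbf{RIG}$-distance at rate $1$, you get no bound on $d_{\textbf{RIG}}(H, I_{\mathcal{H}}(\alpha_j))$ and hence no bound on the weak diameter of $I_{\mathcal{H}}(M_j)$. For the $\mathcal{M}$-side bound, the hypothesis only says each $H$ contains \emph{some} vertex incident to the outer face — $H$ can perfectly well be a spider whose legs reach the boundary at many widely scattered points, so $H$'s trace on $C$ is not ``connected-ish'' at all; such an $H$ meets arcs that are far apart on the cycle of arc-regions, and nothing in the anchor layering supplies the interior shortcuts you would need to keep those regions close in $\textbf{IM}(G,\mathcal{M})$.

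The paper's argument is structured around exactly these two obstacles and is quite different from a $C$-layering. It first proves a lemma (\cref{lem:outerstring}) that produces a $(70,\infty)$-impression with every set meeting the outer face; this is done by an induction (\cref{thm:outerstringinduct}) that repeatedly identifies a connected component of the uncovered vertices and adds one or two carefully chosen ``barrier'' sets (built from at most two overlapping strings, plus their $\mathcal{H}$-neighbourhoods) so that the remaining components still fall into a short list of allowed cases. Note that this lemma deliberately gives up on the $\mathcal{M}$-side bound (it is $\infty$). Then the paper contracts $G$ along this $(70,\infty)$-impression to get a genuinely outerplanar graph $G'$ and applies a separate lemma (\cref{lem:outerplanar}), which handles the outerplanar case by BFS layering in $\textbf{RIG}(G',\mathcal{H}')$ from a single $H_0$ — crucially \emph{not} from the boundary cycle — and uses Menger's theorem to find $2$-separations between layers, which is what actually bounds both sides at once ($11$ and $9$). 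Composing gives $770 = 11\cdot 70$ and $9$. So the top-level ``combine two lemmas'' outline you give is right in spirit, but the mechanisms you sketch for each lemma would not close the required bounds, and the division of labour between the two lemmas is not the one you describe.
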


Note that if $(G, \mathcal{M})$ is an impression of $(G, \mathcal{H})$ where each graph in $\mathcal{M}$ contains a vertex incident to the outer face of $G$, then $\textbf{IM}(G,\mathcal{M})$ is an outerplanar graph.

\begin{figure}
    \centering
    \includegraphics[width=0.3\linewidth]{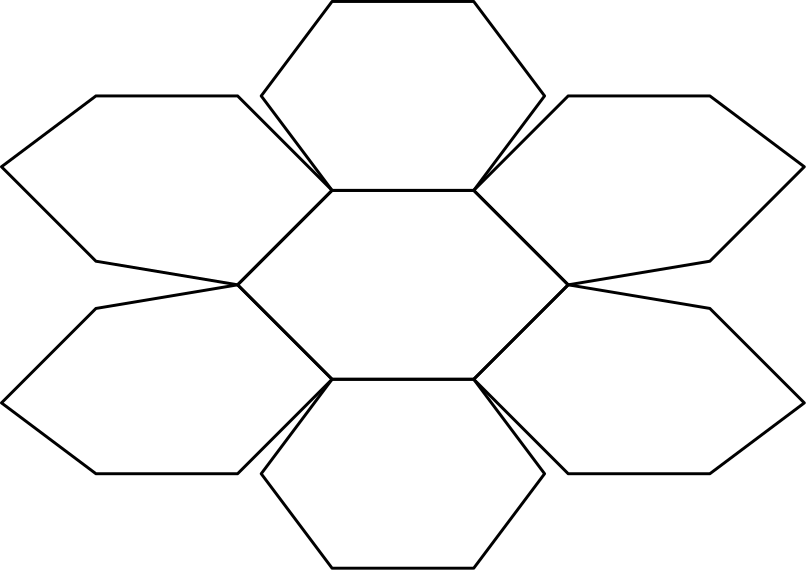}
    \caption{The graph $F_6$.}
    \label{fig:nocacti}
\end{figure}

By \cref{prop:impression}, \cref{thm:outerstring} implies that outerstring graphs are quasi-isometric to outerplanar graphs. This result is already easily implied by a theorem that graphs forbidding $K_{2,3}$ as a fat minor are quasi-isometric to cacti \cite{chepoi2012constant,fujiwara2023coarse}, where a \emph{cacti} is a planar graph in which every edge is on the outer face, or equivalently, a graph in which each maximal 2-connected subgraph is either an edge of a cycle.
One might hope that \cref{thm:outerstring} could be improved so that $\textbf{IM}(G,\mathcal{M})$ is also a cacti.
However this is not possible. For every $\ell \ge 3$, let $F_\ell$ be the outerplanar graph consisting of a cycle of length $\ell$ where for every edge of this cycle, we identify the edge with an edge of another cycle of length $\ell$ (see \cref{fig:nocacti} for an illustration of the graph $F_6$).
Then it can easily be seen that there is no $x,y$ such that every $(F_\ell, \{ \{u,v\} :uv \in E(F_\ell)\})$ has a $(x,y)$-impression $(F_\ell, \mathcal{M}_\ell )$ in which $\textbf{IM}(F_\ell,\mathcal{M}_\ell)$ is a cacti.
It is however a nice exercise to show that outerplanar graphs are quasi-isometric to cacti. With this, \cref{thm:outerstring} and \cref{prop:impression} then yeild another proof that outerstring graphs are quasi-isometric to cacti.

To prove \cref{thm:outerstring}, we begin with the easier case of when $G$ is simply an outerplanar graph (so when every vertex of $G$ is incident to the outer face of $G$).
For this, we first define separations.
For a connected graph $G$ and connected sets $X,Y$ of $G$, we say that $(X,Y)$ is a \emph{separation} of $G$ if $X\cup Y=V(G)$, both $X\backslash Y$ and $Y\backslash X$ are non-empty, and $G \backslash (X\cap Y)$ is disconnected.
If $|X\cap Y|\le k$, then we say that $(X,Y)$ is a \emph{$k$-separation} of $G$.

\begin{lemma}\label{lem:outerplanar}
    Let $G$ be a finite outerplanar graph and $\mathcal{H}$ a spanning collection of connected vertex sets in $G$.
    Then there is a $(11,9)$-impression $(G, \mathcal{M})$ of $(G,\mathcal{H})$.
\end{lemma}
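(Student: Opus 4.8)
The plan is to induct on $|V(G)|$ using small separations, building the family $\mathcal{M}$ (and hence the induced minor $\mathbf{IM}(G,\mathcal{M})$) piece by piece. Since $G$ is outerplanar, walking around the outer face gives a cyclic order on $V(G)$; outerplanar graphs have a tree-like structure where the ``bags'' are the bounded faces (cycles) and bridges, glued along vertices and edges, i.e.\ along $1$- and $2$-separations. The base case is when $G$ is a single vertex or a single edge, where $\mathcal{M}$ can be taken to consist of singletons and the bounds are trivial.

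\medskip

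First I would handle the reduction via a $1$-separation $(X,Y)$ with $X\cap Y=\{c\}$ a cutvertex. Apply induction to $G[X]$ with the restricted family $\mathcal{H}_X=\{H\cap X: H\in\mathcal H, H\cap X\neq\emptyset\}$ (note this family still spans $G[X]$ because $G$ is outerplanar, so each edge lies on a face and hence stays inside one side, modulo a little care at $c$), and similarly to $G[Y]$, obtaining impressions $(G[X],\mathcal M_X)$ and $(G[Y],\mathcal M_Y)$. The issue is that a single $H\in\mathcal H$ may be split between the two sides; but any such $H$ contains $c$, so $I_{\mathcal H}(M)$ for each $M$ gains only a bounded ``detour'' through the sets containing $c$, which is why the weak-diameter bound degrades from a clean value to $11$ rather than staying at, say, $1$ or $2$. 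I would merge $\mathcal M_X$ and $\mathcal M_Y$ by identifying (or gluing along $c$) the members containing $c$, and check the two weak-diameter conditions of an $(11,9)$-impression survive: the $\mathbf{IM}$-side bound of $9$ should be governed by how far apart the images of a single $H$ can land, and the $\mathbf{RIG}$-side bound of $11$ by the detour-through-$c$ argument. The constants $11$ and $9$ are presumably exactly what makes this merging closed (i.e.\ the output of the merge is again an $(11,9)$-impression, not something worse), so I would want to design the inductive statement so that these are fixed points of the gluing operation.

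\medskip

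Second, the $2$-separation case, $X\cap Y=\{a,b\}$ with $ab$ possibly an edge of $G$: this is the genuinely new phenomenon (it is the reason the graphs $F_\ell$ exist and the reason the answer is outerplanar rather than a cactus). Here I would split along $\{a,b\}$, recurse on each side with the edge $ab$ added if necessary, and glue the impressions by identifying the members meeting $a$ with each other and those meeting $b$ with each other — or possibly, to keep connectivity of the glued sets, taking a member that meets $\{a,b\}$ and routing a short path along the outer face between $a$ and $b$ on one of the sides. The bound on the length of that routing path is what needs to stay controlled; since $\mathcal H$ spans $G$, the edge $ab$ (or a path realizing the $a$–$b$ adjacency in $\mathbf{IM}$) is witnessed by bounded-diameter behaviour, so the detour is $O(1)$. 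The main obstacle is bookkeeping: verifying that after a $1$- or $2$-separation gluing the \emph{weak diameter in the glued graph} (not in a single side) is still at most $11$ resp.\ $9$. This requires the observation that a shortest path in $\mathbf{IM}(G,\mathcal M)$ between two images can be rerouted to stay on one side plus a bounded crossing through the separator, which in turn uses outerplanarity (the separator has size $\le 2$, so any path crossing it does so through $a$ or $b$).

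\medskip

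Finally, if $G$ is $2$-connected with no nontrivial $2$-separation, then $G$ is just a cycle (the outer face bounds a single inner face), and I would produce an explicit $\mathcal M$: split the cycle into a bounded number of arcs and let the $M$'s be these arcs, possibly overlapping slightly so that $\bigcup M = V(G)$ and each $I_{\mathcal H}(M)$, $I_{\mathcal M}(H)$ has small weak diameter. For a cycle, distances in $\mathbf{RIG}(G,\mathcal H)$ are comparable to arc-length distances along the cycle up to a factor depending on how $\mathcal H$ sits, and one checks the constants $11,9$ directly. Assembling the three cases — base case, cutvertex reduction, $2$-separation reduction, and the $2$-connected cycle — via induction on $|V(G)|$ completes the proof. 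I expect the write-up's real work to be entirely in pinning down an inductive hypothesis strong enough to be preserved by both gluing operations while weak enough to be established for cycles, so that $(11,9)$ is literally the closure of the base data under the separation merges.
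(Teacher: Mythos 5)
Your proposal takes a genuinely different route from the paper. The paper's proof is a one-shot construction: it picks a root $H_0\in\mathcal{H}$, takes BFS layers $L_i$ in $\textbf{RIG}(G,\mathcal{H})$, sets $A_i = \bigl(\bigcup_{H\in L_i}H\bigr)\setminus\bigl(\bigcup_{j<i}\bigcup_{H\in L_j}H\bigr)$, and lets $\mathcal{M}$ consist of the connected components of the $G[A_i]$'s. The weak-diameter bounds come from applying Menger's theorem between consecutive annuli $\bigcup_{j\le i-2}A_j$ and the relevant component of $\bigcup_{j\ge i}A_j$: outerplanarity forces a $2$-separation whose separator lies entirely inside a single layer $A_{i-1}$, and since all $H\in\mathcal{H}$ meeting that layer lie in $L_{i-1}$, the separator vertices are automatically close in $\textbf{RIG}(G,\mathcal{H})$. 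There is no induction on $|V(G)|$ and no gluing. You instead propose a recursion along arbitrary $1$- and $2$-separations of $G$ with a merge step, which is a real structural departure.

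The recursion-with-gluing plan has a concrete gap that you partly flag but do not resolve. When you merge a member $M_a^X\in\mathcal{M}_X$ with $M_a^Y\in\mathcal{M}_Y$ along a cutvertex (or 2-separator vertex) $a$, the inductive hypothesis only gives that $I_{\mathcal{H}_X}(M_a^X)$ has weak diameter $\le 11$ in $\textbf{RIG}(G[X],\mathcal{H}_X)$ and similarly on the $Y$ side; to relate an element on the $X$ side to one on the $Y$ side you route through the clique of sets containing $a$, which gives roughly $11+1+11$, not $11$. Symmetrically, for an $H$ crossing the separator, $I_\mathcal{M}(H)$ in the glued $\textbf{IM}(G,\mathcal{M})$ picks up roughly $9+9$. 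So the pair $(11,9)$ is not a fixed point of your merge as written, and since outerplanar graphs can have long nested chains of $1$- and $2$-separations, the degradation compounds. You explicitly acknowledge this ("I would want to design the inductive statement so that these are fixed points"), but that is precisely the missing content: you would need to strengthen the inductive hypothesis to additionally control, say, the weak diameter of the $M$'s meeting the boundary vertices of the separation, and it is not at all obvious how to make that close. The paper sidesteps this entirely because its separators are chosen \emph{after} the BFS layering so that they live inside a single layer, making the detour control automatic. A further minor issue: your base case for a cycle $G$ cannot use a \emph{bounded} number of arcs, since a single long arc can have unbounded weak diameter in $\textbf{RIG}(G,\mathcal{H})$; you would need an unbounded number of short arcs, chosen according to RIG-distance — which is essentially a one-dimensional special case of the paper's layering, and suggests that the layering viewpoint is the more natural one here.
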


\begin{proof}
    Clearly it is enough to handle the case that $G$ is connected.
    Choose some $H_0\in \mathcal{H}$ and for every non-negative integer $i$, let $L_i$ be the vertices of $\textbf{RIG}(G,\mathcal{H})$ at distance $i$ from $H_0$ in $\textbf{RIG}(G,\mathcal{H})$.
    For each $i$ we define the following two vertex sets in $G$.
    Let $D_i=\bigcup_{H\in L_i}H$, and let $A_i=D_i\backslash D_{i-1}$ (where $D_{-1}=\emptyset$).
    For each $A_i$, let $\mathcal{M}_i$ be the maximal connected vertex sets of $G[A_i]$.
    Let $\mathcal{M}=\bigcup_{i=0}^\infty \mathcal{M}_i$.
    See \cref{fig:outerplanar} for an illustration.
    We must show that $(G, \mathcal{M})$ is a $(11,9)$-impression of $(G,\mathcal{H})$.

\begin{figure}
    \centering
    \includegraphics[width=0.65\linewidth]{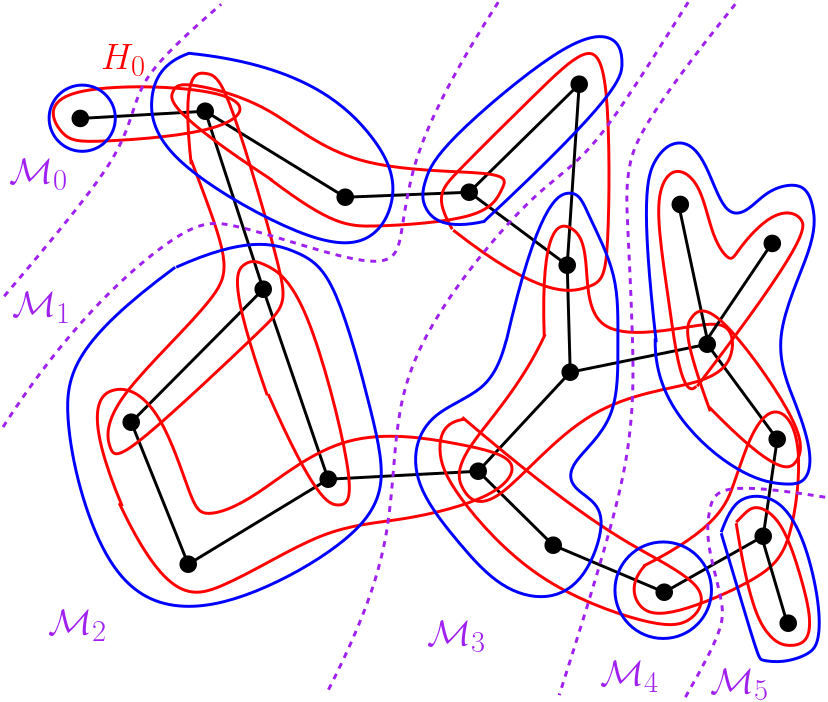}
    \caption{An illustration of the resulting collection of disjoint connected sets $\mathcal{M}$.
    The connected sets in $\mathcal{H}$ are the red sets and the sets in $\mathcal{M}$ are the blue ones. The dashed purple lines indicate the partition of $\mathcal{M}$ into $\bigcup_{i=0}^\infty \mathcal{M}_i$.}
    \label{fig:outerplanar}
\end{figure}

    Next, we show for every $M\in \mathcal{M}$ that $I_\mathcal{H}(M)$ has weak diameter at most $11$ in $\textbf{RIG}(G,\mathcal{H})$.
    Let $M\in \mathcal{M}$. Then, for some non-negative integer $i$, we have that $M\in \mathcal{M}_i$.
    We may assume that $i\ge 2$, since otherwise we clearly have that $I_\mathcal{H}(M)$ has weak diameter at most $4$ in $\textbf{RIG}(G,\mathcal{H})$.
    Since $G$ is outerplanar, observe that there exists a $2$-seperation $(X,Y)$ of $G$ with $A_{i-2}\subseteq X$, $X\cap Y\subseteq A_{i-1}$, and $M \subseteq Y$.
    To see this one can apply Menger's theorem between $\bigcup_{0\le j \le i-2} A_j$ and the connected component of $\bigcup_{j\ge i} A_j$ that contains $M$.
    For each $x\in X\cap Y$, let $\mathcal{H}_x = \{H\in L_{i-1} : x\in V(H)\}$.
    So, $\mathcal{H}_x$ is a clique in $\textbf{RIG}(G,\mathcal{H})$.
    Furthermore, each $H\in I_\mathcal{H}(M)$ is at distance at most $2$ from $\bigcup_{x\in X \cap Y} \mathcal{H}_x$ in $\textbf{RIG}(G,\mathcal{H})$.
    Since $|X\cap Y|\le 2$ and $\textbf{RIG}(G,\mathcal{H})[I_\mathcal{H}(M)]$ is connected, it now follows that $I_\mathcal{H}(M)$ has weak diameter at most $(2+1+2)+1+(2+1+2)=11$ in $\textbf{RIG}(G,\mathcal{H})$.

    Now, we show for every $H\in \mathcal{H}$ that $I_\mathcal{M}(H)$ has weak diameter at most 9 in $\textbf{IM}(G,\mathcal{M})$. This is similar to the previous argument.
    Let $H\in \mathcal{H}$.
    Then for some $i\ge 0$, we have that $H\in L_i$.
    We may assume that $i\ge 3$, since otherwise we clearly have that $I_\mathcal{M}(H)$ has weak diameter at most $6$ in $\textbf{IM}(G,\mathcal{M})$.
    Similarly to before, by applying Menger's theorem between
    $\bigcup_{0\le j \le i-3} A_j$ and the connected component of $\bigcup_{j\ge i-1} A_j$ that contains $H$,
    there exists a $2$-separation $(X,Y)$ of $G$ with $A_{i-3}\subseteq X$, $X\cap Y\subseteq A_{i-2}$, and $H \subseteq Y$.
    For each $x\in X\cap Y$, choose some $M_x\in \mathcal{M}_{i-2}$ with $x\in M_x$.
    Then in $\textbf{IM}(G,\mathcal{M})$, each vertex of $I_\mathcal{M}(H)$ is at distance at most 2 from some vertex of $\bigcup_{x\in X \cap Y} M_x$.
    Since $|X\cap Y|\le 2$ and $\textbf{IM}(G,\mathcal{M})[I_\mathcal{M}(H)]$ is connected, it now follows that $I_\mathcal{M}(H)$ has weak diameter at most $(2+2)+1+(2+2)=9$ in $\textbf{IM}(G,\mathcal{M})$.

    Hence $(G, \mathcal{M})$ is indeed a $(11,9)$-impression of $(G,\mathcal{H})$, as desired.
\end{proof}

Let $G$ be a finite planar graph and $\mathcal{H}$ a spanning collection of connected vertex sets in $G$, each of which contains a vertex incident to the outer face of $G$.
For a collection disjoint connected sets $\mathcal{B}$ of $G$, we say that $(G,\mathcal{B})$ is a \emph{partial $(x,\infty)$-impression} of $(G,\mathcal{H})$ if for every $B\in \mathcal{B}$, $I_\mathcal{H}(B)$ has weak diameter at most $x$ in $\textbf{RIG}(G,\mathcal{H})$.

We aim to find a $(x,y)$-impression $(G,\mathcal{M})$ of $(G,\mathcal{H})$ in which each $M\in \mathcal{M}$ contains a vertex incident to the outer face of $G$.
In light of \cref{lem:outerplanar}, we can find such a $(x,y)$-impression by first finding such a $(x', \infty)$-impression and then refining this impression.
The following theorem comes close to giving us such a $(x', \infty)$-impression (not every $M\in \mathcal{M}$ contains a vertex incident to the outer face of $G$, but they are at distance at most $1$ from such a $M'\in \mathcal{M}$ in $\textbf{IM}(G,\mathcal{M})$).
The exact technical statement of this theorem is tailored to allow for an inductive argument on extending partial $(x',\infty)$-impressions.

\begin{theorem}\label{thm:outerstringinduct}
    Let $G$ be a finite planar graph and $\mathcal{H}$ be a spanning collection of connected vertex sets in $G$, each of which contains a vertex incident to the outer face of $G$.
    Let $(G, \mathcal{B})$ be a partial $(30,\infty)$-impression of $(G,\mathcal{H})$ in which for every $B\in \mathcal{B}$, either $B$ contains a vertex incident to the outer face of $G$ and $I_\mathcal{H}(B)$ has weak diameter at most 10 in $\textbf{RIG}(G,\mathcal{H})$, or $B$ touches some $B' \in \mathcal{B}$ where $B'$ contains a vertex incident to the outer face of $G$, and
    for every maximal connected vertex set $K$ of $G\backslash (\bigcup_{B\in \mathcal{B}} B)$, either
    \begin{enumerate}
        \item\label{item:1} $I_\mathcal{H}(K)$ has weak diameter at most $10$ in $\textbf{RIG}(G,\mathcal{H})$ and contains a vertex incident to the outer face of $G$, or $I_\mathcal{H}(K)$ has weak diameter at most $30$ in $\textbf{RIG}(G,\mathcal{H})$ and touches some $B\in \mathcal{B}$ that does contain a vertex incident incident to the outer face of $G$, or

        \item\label{item:2} $K$ contains a vertex incident to the outer face of $G$ and touches no $B\in \mathcal{B}$, or 

        \item\label{item:3} $K$ contains a vertex incident to the outer face of $G$ and touches a unique $B\in \mathcal{B}$, or 

        \item\label{item:4} $K$ contains a vertex incident to the outer face of $G$ and there exists distinct $B_1,B_2\in \mathcal{B}$ satisfying the following
        \begin{itemize}
            \item $K$ touches $B_1$ and $B_2$ and no other $B\in \mathcal{B}\backslash \{B_1,B_2\}$,
            \item $B_1$ and $B_2$ both contain a vertex incident to the outer face of $G$, and
            \item $B_1$ and $B_2$ both contain a vertex incident to a common face $F$ distinct from the outer-face.
        \end{itemize}
    \end{enumerate}
    Then there is a $(30, \infty)$-impression $(G, \mathcal{M})$ of $(G,\mathcal{H})$ such that for every $M\in \mathcal{M}$, either $M$ contains a vertex incident to the outer face of $G$ and $I_\mathcal{H}(B)$ has weak diameter at most 10 in $\textbf{RIG}(G,\mathcal{H})$, or $M$ touches some $M' \in \mathcal{M}$ where $M'$ contains a vertex incident to the outer face of $G$.
\end{theorem}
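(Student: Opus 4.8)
The goal is an induction. Given the partial impression $(G,\mathcal{B})$ together with the case analysis of the maximal connected vertex sets ("holes") $K$ of $G \setminus \bigcup_{B \in \mathcal{B}} B$, I want to produce a genuine $(30,\infty)$-impression $(G,\mathcal{M})$ covering all of $V(G)$, where each member is near the outer face. The natural strategy is to measure progress by the number of holes $K$ that fall into cases \ref{item:2}, \ref{item:3}, \ref{item:4} — i.e. the holes not already "absorbable" — and to show that if any such hole exists we can modify $\mathcal{B}$ (typically by merging some hole, or a controlled piece of it, into the existing sets, or by promoting a hole to a new member of $\mathcal{B}$ and re-cutting) so as to strictly decrease a suitable potential while preserving the structural hypotheses. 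When no such hole remains, every hole is of type \ref{item:1}, and then $\mathcal{M} := \mathcal{B} \cup \{K : K \text{ a hole}\}$ is the desired impression: the weak-diameter bound $30$ holds by hypothesis on $\mathcal{B}$ and on the holes, $\bigcup \mathcal{M} = V(G)$ by construction, and each $M$ is near the outer face (holes of type \ref{item:1} either touch the outer face or touch a $B$ that does).

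The crux is handling a type \ref{item:4} hole $K$, which touches exactly two sets $B_1, B_2$, both near the outer face and both incident to a common non-outer face $F$. Here the plan is to use $F$: the two arcs of the boundary walk of $F$ between $B_1$ and $B_2$, together with $B_1 \cup B_2 \cup K$, separate the plane, and I can use this to split whatever lies "inside $F$" from the rest, attaching $K$ (or the relevant portion of it) to $B_1 \cup B_2$ to form a new member of $\mathcal{B}$; the weak-diameter bound is controlled because $K$ has weak diameter $\le 10$ (type \ref{item:4} holes contain an outer-face vertex, so $I_{\mathcal{H}}(K)$ behaves like the small case) and $B_1, B_2$ each have weak diameter $\le 30$, and they are linked through $K$, giving a combined weak diameter within the budget once one checks the arithmetic (this is where the constants $10$ and $30$ are calibrated). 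Types \ref{item:2} and \ref{item:3} are easier: a hole touching no $B$, or a unique $B$, can simply be promoted — in the unique-touch case, merge $K$ into that $B$ if the resulting weak diameter stays $\le 30$, otherwise make $K$ itself a new member of $\mathcal{B}$ touching $B$; in the no-touch case $K$ becomes a new outer-face member. After each such move, the newly created holes of $G \setminus \bigcup \mathcal{B}$ must be re-classified into cases \ref{item:1}--\ref{item:4}; this is the bookkeeping that makes the induction hypothesis exactly strong enough to be maintained, and verifying that no new "bad" hole of larger total size is created is what the technical statement is designed to guarantee.

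I would set this up as an induction on $|V(G) \setminus \bigcup_{B \in \mathcal{B}} B|$, or more precisely on the number of vertices lying in type \ref{item:2}--\ref{item:4} holes: pick such a hole, perform the appropriate merge/promote operation from the three cases above, observe that $\mathcal{B}$ remains a partial $(30,\infty)$-impression meeting all the listed requirements (the weak-diameter checks reduce to: concatenating a weak-diameter-$\le 10$ set with two weak-diameter-$\le 30$ sets linked through it stays $\le 30 + 1 + 10 + 1 + 30$-type bounds — but crucially the merged-in $I_{\mathcal{H}}$ of the new set must be $\le 30$, which forces the split to keep pieces small, hence the planar/Menger-style argument as in \cref{lem:outerplanar} to find a low-order separation bounding the part we absorb), and that the measure strictly drops; then apply the inductive hypothesis.

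The main obstacle I anticipate is precisely the type \ref{item:4} case and the re-classification afterwards: ensuring that when we carve along the common face $F$ and absorb $K$ into $B_1 \cup B_2$, the portion of $G$ that gets "sealed off inside $F$" decomposes into new holes each of which lands cleanly in one of the four cases (in particular not creating a hole touching three or more members, which the hypothesis forbids) — this is a genuinely planar argument and is the reason the theorem is stated with a face $F$ rather than abstractly. The second obstacle is purely arithmetic calibration: checking that every merge keeps $I_{\mathcal{H}}(\cdot)$ weak diameter within $30$ for the "small" subcase and within the partial-impression budget $30$ overall, which pins down why the hypotheses separate the $\le 10$ and $\le 30$ thresholds. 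Everything else — the terminal step producing $\mathcal{M}$, and the easy cases \ref{item:2}, \ref{item:3} — is routine once the induction measure is fixed.
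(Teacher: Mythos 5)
Your plan starts from the right induction measure ($|V(G)\setminus\bigcup_{B\in\mathcal{B}}B|$, as in the paper) and the right terminal step, but there is a decisive misreading of the hypothesis that makes the heart of your argument fail. You write that type~\ref{item:4} holes have $I_{\mathcal{H}}(K)$ of weak diameter at most $10$ because they ``contain an outer-face vertex, so $I_{\mathcal{H}}(K)$ behaves like the small case,'' and similarly you propose to promote type~\ref{item:2} holes wholesale and to merge or promote type~\ref{item:3} holes in their entirety. But items~\ref{item:2}--\ref{item:4} say nothing about the weak diameter of $I_{\mathcal{H}}(K)$; only item~\ref{item:1} carries a diameter bound. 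A type~\ref{item:2}, \ref{item:3}, or \ref{item:4} hole can have $I_{\mathcal{H}}(K)$ of arbitrarily large weak diameter, so none of it can be absorbed or promoted in one go: adding all of $K$ would immediately break the partial-$(30,\infty)$-impression property. This is not an arithmetic calibration issue that could be fixed by adjusting the budget; it is a category error between ``$K$ touches the outer face'' and ``$I_{\mathcal{H}}(K)$ is small.''

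What the paper actually does in each of these cases is carve a \emph{thin} set out of $K$ rather than take all of $K$. For type~\ref{item:2}, it adds a single outer-face vertex $\{b\}$. For type~\ref{item:3}, it adds the vertex set of a shortest path from an outer-face vertex to $N_G(B)$ that lies inside the union of two intersecting $H_1,H_2\in\mathcal{H}$, giving $I_{\mathcal{H}}(B^*)$ weak diameter at most $3$. For type~\ref{item:4}, it constructs paths $L,R$ in the same controlled way toward $B_1$ and $B_2$, enlarges them to saturations $L^*,R^*$ each with $I_{\mathcal{H}}$-weak diameter at most $5$, and adds $L^*\cup R^*$ if they meet, or both $L^*$ and $R^*$ separately otherwise. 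The extremality of the choices of $L$ and $R$ (maximizing/minimizing the index of the touched boundary vertex) is what guarantees that the remaining components re-classify cleanly into types~\ref{item:1}--\ref{item:4}, in particular that the ``middle'' component between $L^*$ and $R^*$ cannot touch $B_1$, $B_2$, or any third member of $\mathcal{B}$; your proposal has no substitute for this, and vaguely gesturing at a Menger-style separation does not supply it (Menger is used in \cref{lem:outerplanar}, not here). Your terminal step and the type~\ref{item:1} case are fine, but the rest needs to be rebuilt around extracting small pieces rather than promoting whole holes.
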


\begin{proof}
    We shall argue inductively on $|V(G)\backslash (\bigcup_{B\in \mathcal{B}} B)|$.
    Clearly the theorem holds if we have $V(G)\backslash (\bigcup_{B\in \mathcal{B}} B) = \emptyset$. So, we may assume that there is some connected component of $G\backslash (\bigcup_{B\in \mathcal{B}} B)$, and we let $K$ be its vertex set.
    
    Suppose first that $K$ satisfies \cref{item:1}.
    Let $B^*=K$ and $\mathcal{B}^*= \mathcal{B} \cup \{B^*\}$.
    Clearly, $(G,\mathcal{B}^*)$ satisfies the inductive hypothesis and $|V(G)\backslash (\bigcup_{B\in \mathcal{B}^*} B)|< |V(G)\backslash (\bigcup_{B\in \mathcal{B}} B)|$, so the theorem follows.

    Now, suppose that $K$ satisfies \cref{item:2}.
    Then simply choose some $b\in K$ incident to the outer face of $G$, let $B^*=\{b\}$ and $\mathcal{B}^*= \mathcal{B} \cup \{B^*\}$.
    Clearly, $(G,\mathcal{B}^*)$ satisfies the inductive hypothesis and $|V(G)\backslash (\bigcup_{B\in \mathcal{B}^*} B)|< |V(G)\backslash (\bigcup_{B\in \mathcal{B}} B)|$, so the theorem follows.
    
    Now, suppose that $K$ satisfies \cref{item:3}.
    Let $B$ be the unique set in $\mathcal{B}$ that $K$ touches.
    Then $I_\mathcal{H}(B)$ has weak diameter at most 10 in $\textbf{RIG}(G,\mathcal{H})$.
    As $K$ contains a vertex incident to the outer face of $G$, there exists intersecting (but possibly non-distinct) $H_1,H_2\in \mathcal{H}$ such that $K\cap (H_1\cup H_2)$ contains a  vertex $b$ incident to the outer face of $G$, and $K\cap (H_1\cup H_2)$ touches $B$.
    Let $B^*$ be the vertex set of a shortest path between $b$ and $N_G(B)$ in $G[K\cap (V(H_1)\cup V(H_2))]$, and let $\mathcal{B}^*= \mathcal{B} \cup \{B^*\}$.
    Then $I_\mathcal{H}(B^*)$ has weak diameter at most 3 in $\textbf{RIG}(G,\mathcal{H})$, and clearly $B^*$ touches $B$.
    Let $K'$ be the vertex set of a connected component of $G[K]\backslash B^*$.
    If $K'$ contains a vertex incident to the outer face of $G$, then $K'$ satisfies either \cref{item:3} or \cref{item:4}.
    Otherwise, $K'$ contains no vertex incident to the outer face of $G$. As $K'$ touches none of $\mathcal{B^*}\backslash \{B,B^*\}$ (but at least one of $B,B^*$) and for every $H\in \mathcal{H}$ that intersects $K'$, we have that $H$ also intersects at least one of $B$ or $B^*$, we have that $I_\mathcal{H}(K')$ has weak diameter at most $10+3=13$ in $\textbf{RIG}(G,\mathcal{H})$.
    Thus, $K'$ then satisfies \cref{item:1}.
    So, $(G,\mathcal{B}^*)$ satisfies the inductive hypothesis and $|V(G)\backslash (\bigcup_{B\in \mathcal{B}^*} B)|< |V(G)\backslash (\bigcup_{B\in \mathcal{B}} B)|$, so the theorem follows.

    Lastly, we can now assume that $K$ satisfies \cref{item:4}.
    Let $B_1, B_2, F$ be as in \cref{item:4}.
    Note that the intersection of $K$ with the vertices of the outer face of $G$ induces a connected graph since $B_1$ and $B_2$ both contain a vertex on the outer face of $G$ and the face $F$ (which is distinct from the outer face). Let $Q$ be the vertex set of this connected induced subgraph.
    Let $p_1p_2\ldots p_t$ be the walk in $G[Q]$ along the outer face of $G$ (and containing all of) $Q$, where $p_1$ touches $B_1$ and $p_t$ touches $B_2$.
    Choose $1\le \ell \le t$ maximum such that that there exists a path with vertex set $L$ between $p_\ell$ and $N_G(B_1)$ such that $L$ is contained in $K\cap (H_\ell\cup H'_\ell)$ for some intersecting (but not necessarily distinct) $H_\ell,H'_\ell \in \mathcal{H}$.
    Similarly, choose $1 \le r \le t$ minimum such that that there exists a path with vertex set $R$ between $p_r$ and $N_G(B_2)$ such that $R$ is contained in $K\cap (H_r\cup H'_r)$ for some intersecting (but not necessarily distinct) $H_r,H'_r \in \mathcal{H}$.
    Note that such $\ell, r$ exist since $K$ touches exactly $B_1,B_2$ from $\mathcal{B}$, and $K$ (and every $H\in \mathcal{H}$) contains a vertex incident to the outer face of $G$.
    Let $L^*$ be the vertex set of the connected component of $G\left[ K \cap \left( \bigcup_{  H\in I_\mathcal{H}(L)}H \right) \right]$ containing $L$, and similarly, let $R^*$ be the vertex set of the connected component of $G\left[ K \cap \left( \bigcup_{  H\in I_\mathcal{H}(R)}H \right) \right]$ containing $R$.
    Clearly $I_\mathcal{H}(L^*)$ and $I_\mathcal{H}(R^*)$ both have weak diameter at most 5 in $\textbf{RIG}(G,\mathcal{H})$.
    See \cref{fig:outerstring} for an illustration (in a case where $L^*$ and $R^*$ do not intersect).
    We split into two cases depending on whether or not $L^*$ and $R^*$ intersect.

    \begin{figure}
    \centering
    \includegraphics[width=0.82\linewidth]{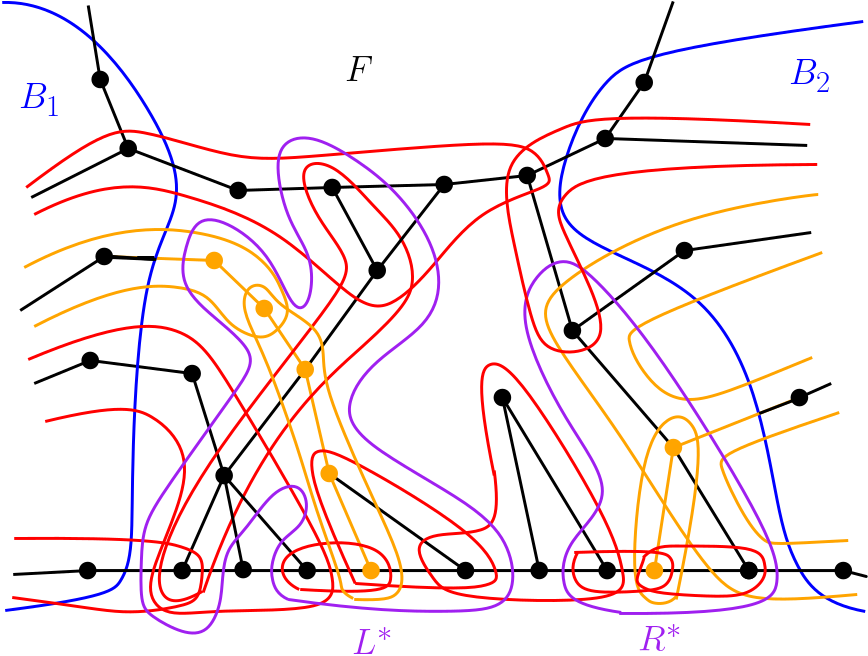}
    \caption{An illustration of the constructed connected sets $L^*$ and $R^*$ (coloured purple) in a case where they do not intersect.
    Both $B_1$ and $B_2$ are coloured blue.
    The connected sets in $\mathcal{H}$ are coloured red or orange, with the orange ones being $H_\ell, H_\ell', H_r,H_r'$.
    The two orange paths indicate $L$ and $R$.
    }
    \label{fig:outerstring}
\end{figure}

    Suppose first that $L^*$ and $R^*$ do intersect.
    Then let $B^*=L^* \cup R^*$, and let $\mathcal{B}^*= \mathcal{B} \cup \{B^*\}$.
    Clearly $I_\mathcal{H}(B^*)$ has weak diameter at most $5+5=10$ in $\textbf{RIG}(G,\mathcal{H})$.
    Note that $B^*$ touches both $B_1$ and $B_2$.
    Consider the vertex set $K'$ of a connected component of $G[K]\backslash B^*$.
    If $K'$ contains a vertex incident to the outer face of $G$, then $K'$ clearly satisfies \cref{item:3} or \cref{item:4}.
    So suppose that $K'$ contains no vertex incident to the outer face of $G$.
    Then $K'$ touches at least one of $B_1,B_2,B^*$ and for every $H\in I_\mathcal{H}(K')$, we have that $H$ intersects at least one of $B_1,B_2,B^*$.
    Since $B^*$ touches both $B_1$ and $B_2$, it follows that $I_\mathcal{H}(B^*)$ has weak diameter at most $10+10+10=30$ in $\textbf{RIG}(G,\mathcal{H})$.
    Thus, $K'$ satisfies \cref{item:1}.
    So, $(G,\mathcal{B}^*)$ satisfies the inductive hypothesis and $|V(G)\backslash (\bigcup_{B\in \mathcal{B}^*} B)|< |V(G)\backslash (\bigcup_{B\in \mathcal{B}} B)|$, so the theorem follows.

    We may now assume that $L^*$ and $R^*$ do not intersect.
    This in particular implies that $\ell < r$.
    Let $\mathcal{B}^*= \mathcal{B} \cup \{L^*, R^*\}$.
    Consider the vertex set $K'$ of a connected component of $G[K]\backslash (L^* \cup R^*)$.
    Suppose that $K'$ does not contain a vertex incident to the outer face of $G$.
    Then $K$ touches at least one of $B_1,B_2,L^*,R^*$, and for every $H\in I_\mathcal{H}(K')$, we have that $H$ intersects at least one of $B_1,B_2,L^*,R^*$.
    Since $I_\mathcal{H}(B_1), I_\mathcal{H}(B_2)$ both have weak diameter at most 10 in $\textbf{RIG}(G,\mathcal{H})$ and $I_\mathcal{H}(L^*), I_\mathcal{H}(R^*)$ both have weak diameter at most 5 in $\textbf{RIG}(G,\mathcal{H})$, it follows that $I_\mathcal{H}(K')$ has weak diameter at most 30 in $\textbf{RIG}(G,\mathcal{H})$.
    Thus in this case, $K'$ satisfies \cref{item:1}.
    So, we may now assume that $K'$ does contain a vertex incident to the outer face of $G$.
    If $K'$ does not touch both $L^*$ and $R^*$, then as before, $K'$ satisfies either \cref{item:3} or \cref{item:4}.
    So we assume now that $K'$ touches both $L^*$ and $R^*$.
    Showing that $K'$ satisfies \cref{item:4} will complete the proof.

    Let $Q'$ be the vertices $p_i$ of the walk $Q$ with $\ell < i <r$. Note that $Q'$ is non-empty since $L^*$ and $R^*$ do not intersect (and $L^*$ contains both $p_i$ and $p_{i+1}$).
    Let $\mathcal{H}_{Q'}= I_\mathcal{H}(Q')$.
    Observe that by our choice of $L$ and $R$, there is no intersecting pair $H_1\in \mathcal{H}_{Q'}$, $H_2\in \mathcal{H}$ such that there exists a path in $G[K \cap (H_1 \cup H_2)]$ between $Q'$ and $N_G(B_1\cup B_2)$.
    By our choice of $L^*$ and $R^*$, we therefore have that 
    $I_\mathcal{H}(K') \subseteq \mathcal{H}_{Q'}$.
    In particular, $K'$ does not touch $B_1$ or $B_2$.
    It now follows that there is some face $F$ of $G$ distinct from the outer face containing a vertex of $L^*$ and a vertex of $R^*$.
    Thus $K'$ satisfies \cref{item:4}.
    
    So, $(G,\mathcal{B}^*)$ satisfies the inductive hypothesis and $|V(G)\backslash (\bigcup_{B\in \mathcal{B}^*} B)|< |V(G)\backslash (\bigcup_{B\in \mathcal{B}} B)|$, so the theorem follows.
\end{proof}

Now we quickly refine the previous theorem so that every graph in $\mathcal{M}$ contains a vertex incident to the outer face of $G$.

\begin{lemma}\label{lem:outerstring}
    Let $G$ be a finite planar graph and $\mathcal{H}$ a spanning collection of connected vertex sets of $G$, each of which contains a vertex incident to the outer face of $G$.
    Then there is a $(70, \infty)$-impression $(G, \mathcal{M})$ of $(G,\mathcal{H})$ such that for every $M\in \mathcal{M}$, $M$ contains a vertex incident to the outer face of $G$.
\end{lemma}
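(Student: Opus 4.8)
The plan is to apply \cref{thm:outerstringinduct} with $\mathcal{B}=\emptyset$ and then absorb the sets that fail to meet the outer face into nearby ones that do. It is enough to treat $G$ connected, since the hypotheses force every component carrying an edge to meet the outer face (any $H\in\mathcal{H}$ covering an edge of such a component lies inside it and meets the outer face). Then $V(G)$ is the unique maximal connected vertex set of $G\setminus\bigcup_{B\in\emptyset}B$, and it satisfies \cref{item:2} of \cref{thm:outerstringinduct}: it meets the outer face and touches no $B\in\emptyset$. So \cref{thm:outerstringinduct} yields a $(30,\infty)$-impression $(G,\mathcal{M}_0)$ in which every $M\in\mathcal{M}_0$ either (i) contains a vertex incident to the outer face and has $I_\mathcal{H}(M)$ of weak diameter at most $10$ in $\textbf{RIG}(G,\mathcal{H})$, or (ii) touches some $M'\in\mathcal{M}_0$ of type~(i). (I phrase the second alternative as ``touches a set of type (i)'' rather than merely ``touches a set meeting the outer face''; this is the point that needs care, see the last paragraph.)

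Next comes the merging. Let $\mathcal{G}\subseteq\mathcal{M}_0$ be the sets of type~(i); every other set of $\mathcal{M}_0$ touches a member of $\mathcal{G}$. For each $M\in\mathcal{M}_0\setminus\mathcal{G}$ fix some $\phi(M)\in\mathcal{G}$ that $M$ touches, and set
\[
\mathcal{M}=\Bigl\{\,M'\cup\textstyle\bigcup\{M:\phi(M)=M'\}\ :\ M'\in\mathcal{G}\,\Bigr\}.
\]
Each member of $\mathcal{M}$ is connected — a connected core $M'$ together with connected pieces each joined to $M'$ by an edge — distinct members are disjoint (each set of $\mathcal{M}_0$ is used exactly once), their union is $\bigcup\mathcal{M}_0=V(G)$, and each contains a vertex incident to the outer face, inherited from its core. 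So it remains only to bound $I_\mathcal{H}$-weak diameters.

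Fix $N=M'\cup M_1\cup\cdots\cup M_k\in\mathcal{M}$ with core $M'\in\mathcal{G}$ and absorbed pieces $M_1,\dots,M_k$, each touching $M'$. Since $\mathcal{H}$ spans $G$, for each $i$ there is an edge of $G$ between $M_i$ and $M'$ lying in $G[H_i]$ for some $H_i\in\mathcal{H}$, so that $H_i\in I_\mathcal{H}(M_i)\cap I_\mathcal{H}(M')$. Now $I_\mathcal{H}(N)=I_\mathcal{H}(M')\cup\bigcup_i I_\mathcal{H}(M_i)$, and given any two of its elements the extreme case is one lying in some $I_\mathcal{H}(M_i)$ and the other in some $I_\mathcal{H}(M_j)$; routing through $H_i$ and $H_j$ inside $I_\mathcal{H}(M')$ and using that $I_\mathcal{H}(M_i),I_\mathcal{H}(M_j)$ have weak diameter at most $30$ while $I_\mathcal{H}(M')$ has weak diameter at most $10$, we obtain distance at most $30+10+30=70$ in $\textbf{RIG}(G,\mathcal{H})$. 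Hence $(G,\mathcal{M})$ is a $(70,\infty)$-impression of $(G,\mathcal{H})$ with every set meeting the outer face, as required.

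The one delicate point is the one flagged above: the composed estimate is $10+30+30$, so it is essential that the cores we absorb into have $I_\mathcal{H}$-weak diameter at most $10$ rather than merely the $30$ guaranteed by the impression (a bound of $30$ there would only give $90$). I would justify this by inspecting the construction in \cref{thm:outerstringinduct}: started from $\mathcal{B}=\emptyset$, every set it creates that meets the outer face is a single vertex, or (the union of) one or two short paths, hence of $I_\mathcal{H}$-weak diameter at most $10$, whereas the sets of weak diameter up to $30$ that it creates never meet the outer face. This is exactly what lets me identify the ``type (ii)'' alternative with ``touches a set of type (i)'' above; the rest is bookkeeping.
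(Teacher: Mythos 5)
Your proof is correct and follows the paper's argument exactly: apply \cref{thm:outerstringinduct} with $\mathcal{B}=\emptyset$, partition the resulting $\mathcal{M}'$ into the sets that meet the outer face and those that do not, absorb each of the latter into a touching member of the former, and compute the weak-diameter bound $30+10+30=70$. The delicate point you flag is genuine — the stated conclusion of \cref{thm:outerstringinduct}, read literally, only says a type-(ii) set touches some $M'$ meeting the outer face, not that $M'$ has $I_\mathcal{H}$-weak diameter at most $10$ — and your resolution by inspecting the construction (when $\mathcal{B}=\emptyset$, every $B^*$ the induction creates that meets the outer face comes from the cases yielding weak diameter at most $10$, while the ones coming via \cref{item:1}'s second alternative, which may reach $30$, never touch the outer face) is exactly what the paper's $30+10+30$ computation implicitly relies on.
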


\begin{proof}
    By \cref{thm:outerstringinduct} (applied with $\mathcal{B}=\emptyset$), there is a $(30, \infty)$-impression $(G, \mathcal{M}')$ of $(G,\mathcal{H})$ such that for every $M\in \mathcal{M}'$, either $M$ contains a vertex incident to the outer-face of $G$ and $I_\mathcal{H}(B)$ has weak diameter at most 10 in $\textbf{RIG}(G,\mathcal{H})$, or $M$ touches some $M' \in \mathcal{M}'$ where $M'$ contains a vertex incident to the outer face of $G$.

    Let $\mathcal{M}_1$ be the elements $M$ of $\mathcal{M}'$ that contain a vertex incident to the outer face of $G$, and let $\mathcal{M}_2$ be the elements $M$ of $\mathcal{M}'$ that do not contain a vertex incident to the outer face of $G$.
    Then $\mathcal{M}' = \mathcal{M}_1 \cup \mathcal{M}_2$.
    Choose some function $f:\mathcal{M}_2 \to \mathcal{M}_1$ such that $M$ touches $f(M)$ for every $M\in \mathcal{M}_2$.
    Let $\mathcal{M}=\{M\cup f^{-1}(M) : M\in \mathcal{M}_1)\}$.
    Then for every $M\in \mathcal{M}$, we have that $I_{\mathcal{H}}(M)$ has weak diameter at most $30+10+30= 70$ in $G$.
    Then $(G, \mathcal{M})$ is a $(70, \infty)$-impression of $(G,\mathcal{H})$ such that for every $M\in \mathcal{M}$, $M$ contains a vertex incident to the outer face of $G$, as desired.
\end{proof}

We can now quickly prove \cref{thm:outerstring} using \cref{lem:outerstring} and \cref{lem:outerplanar}.
We remark that in our proof of \cref{thm:stringmainfinite}, it actually turns out to be more convenient to use \cref{lem:outerstring} and \cref{lem:outerplanar} separately as below instead of \cref{thm:outerstring}.
This is essentially because we end up needing to apply \cref{lem:outerplanar} another time later anyway.
However for the sake of completeness for outerstring graphs, we still complete the proof of \cref{thm:outerstring} here.

\begin{proof}[Proof of \cref{thm:outerstring}.]
    By \cref{lem:outerstring}, there is a $(70, \infty)$-impression $(G, \mathcal{M}')$ of $(G,\mathcal{H})$ such that for every $M\in \mathcal{M}'$, $M$ contains a vertex incident to the outer face of $G$.
    Let $G'=\textbf{IM}(G,\mathcal{M}')$.
    Then $G'$ is outerplanar.
    For each $H \in \mathcal{H}$, let $H'$ be the connected vertex set of $G'$ that is exactly the union of all $M'\in \mathcal{M}'$ such that $M'$ intersects $H$.
    Let $\mathcal{H}'= \{H' : H \in \mathcal{H}\}$.

    By \cref{lem:outerplanar}, there is a $(11,9)$-impression $(G', \mathcal{M}^*)$ of $(G',\mathcal{H}')$.
    Let $\mathcal{M} = \{\bigcup_{M'\in M^*} M' : M^*\in \mathcal{M}\}$.
    Clearly for every $H\in \mathcal{H}$, $I_\mathcal{M}(H)$ has weak diameter at most $9$ in $\textbf{IM}(G,\mathcal{M})$.
    Also, for every $M\in \mathcal{M}$, we have that $I_{\mathcal{H}}(M)$ has weak diameter at most $11\cdot 70 = 770$ in $\textbf{RIG}(G,\mathcal{H})$.
    
    Hence, $(G,\mathcal{M})$ is a $(770,9)$-impression of $(G,\mathcal{H})$, as desired.
\end{proof}

\section{Encasings}\label{sec:encase}

The key to carrying out our inductive proof of \cref{thm:stringmainfinite} in \cref{sec:string} will be that we can find what we shall call a $(a,b,c,d)$-encasing.
The aim of this section is to find these encasings.
We show that we can find $(a,b,c,d)$-encasing by first finding a $(a,\infty,\infty,\infty)$-encasing, then refining it to a $(a,\infty,c,\infty)$-encasing, then further refining it to a $(a,\infty,c,d)$-encasing, and then finally refining it to a $(a,b,c,d)$-encasing.
To ease into things, we delay the full definition of an $(a,b,c,d)$-encasing for now, and start simply with $(a,\infty,\infty,\infty)$-encasings.

Let $G$ be a finite planar graph and let $\mathcal{H}$ be a spanning collection of connected vertex sets in $G$.
Let $\mathcal{B}$ be a collection of disjoint connected vertex sets in $G$.
We say that $\mathcal{B}$ \emph{encircles} $(G,\mathcal{H})$ if every $B\in \mathcal{B}$ contains a vertex incident to the outer face of $G$, and every $H\in \mathcal{H}$ that contains a vertex incident to the outer face of $G$ is contained in $\bigcup_{B\in \mathcal{B}}B$.
We say that $\mathcal{B}$ is a \emph{$(a,\infty,\infty,\infty)$-encasing} of $(G,\mathcal{H})$ if $\mathcal{B}$ encircles $(G,\mathcal{H})$, and
\begin{itemize}
    \item for every $B\in \mathcal{B}$, we have that $I_{\mathcal{H}}(B)$ has weak diameter at most $a$ in $\textbf{RIG}(G,\mathcal{H})$.
\end{itemize}

We begin by finding a $(a,\infty,\infty,\infty)$-encasing. The lemma has an extra condition on the vertices contained in $\bigcup_{B\in \mathcal{B}}B$ that we will need later.

\begin{lemma}\label{lem:encase1}
    Let $G$ be a finite planar graph and $\mathcal{H}$ a spanning collection of connected vertex sets of $G$.
    Then there is a $(210,\infty,\infty,\infty)$-encasing $\mathcal{B}$ of $(G, \mathcal{H})$ with $\bigcup_{B\in \mathcal{B}} B = \bigcup_{H\in \mathcal{H}_0 \cup \mathcal{H}_1} H$, where $\mathcal{H}_0$ are the elements of $\mathcal{H}$ that contain a vertex incident to the outer face of $G$ and $\mathcal{H}_1$ are the elements of $\mathcal{H}\backslash \mathcal{H}_0$ that intersect a element of $\mathcal{H}_0$.
\end{lemma}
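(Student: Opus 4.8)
The plan is to run the layering construction from the proof of \cref{lem:outerplanar}, but organised around the facial walk of the outer face of $G$ instead of around a breadth-first search of $\textbf{RIG}(G,\mathcal{H})$. First reduce to the case where $G$ is connected with at least two vertices (components are handled independently, and an isolated vertex carried by a singleton string of $\mathcal{H}$ is trivial), so $G$ has minimum degree at least $1$. Let $W=w_1w_2\cdots w_m$ be the closed facial walk of the outer face of $G$. Since $\mathcal{H}$ spans $G$, each edge $w_iw_{i+1}$ of $W$ lies in $G[H_i]$ for some string $H_i\in\mathcal{H}$, and $H_i\in\mathcal{H}_0$ because $w_i$ is incident to the outer face; as $w_i\in H_{i-1}\cap H_i$, the sets $H_1,\dots,H_m,H_1$ form a closed walk in $\textbf{RIG}(G,\mathcal{H})$ lying entirely in $\mathcal{H}_0$, and in particular every vertex of $W$ lies in a string of $\mathcal{H}_0$. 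I will use two elementary facts: (i) if $w_a$ and $w_b$ are at distance at most $\ell$ along $W$, then every string of $\mathcal{H}_0$ through $w_a$ and every string of $\mathcal{H}_0$ through $w_b$ are at distance at most $\ell+1$ in $\textbf{RIG}(G,\mathcal{H})$; and (ii) since each string of $\mathcal{H}_0$ is connected and meets the connected set $V(W)$, the set $U:=\bigcup_{H\in\mathcal{H}_0\cup\mathcal{H}_1}H$ is connected in $G$.

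Next, fix a small constant $L$ and cut the cyclic index set $\{1,\dots,m\}$ into consecutive intervals $P_1,\dots,P_k$ each of length at most $L$. Give each $H\in\mathcal{H}_0$ the \emph{label} $\min\{j:H\text{ contains some }w_i\text{ with }i\in P_j\}$ and each $H\in\mathcal{H}_1$ the label $\min\{j:H\text{ meets some }\mathcal{H}_0\text{-string through some }w_i\text{ with }i\in P_j\}$, so that every string of $\mathcal{H}_0\cup\mathcal{H}_1$ carries a unique label in $\{1,\dots,k\}$. Writing $\Sigma_j$ for the union of the strings of label $j$, fact (i) shows that all strings of label $j$ lie pairwise within $\textbf{RIG}(G,\mathcal{H})$-distance $L+4$, so $I_{\mathcal{H}}(\Sigma_j)$ has weak diameter at most $L+6$. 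Exactly as in \cref{lem:outerplanar}, put $D_j:=\Sigma_1\cup\cdots\cup\Sigma_j$ and $A_j:=D_j\setminus D_{j-1}$ (with $D_0=\emptyset$), and let $\mathcal{M}$ be the collection of all connected components in $G$ of all the induced subgraphs $G[A_j]$. Then $\mathcal{M}$ is a family of pairwise disjoint connected vertex sets with $\bigcup\mathcal{M}=D_k=U$; moreover if $x\in A_j$ then no string through $x$ has label below $j$ while some string through $x$ has label at most $j$, so $x\in\Sigma_j$, whence every member of $\mathcal{M}$ is contained in some $\Sigma_j$ and so has $I_{\mathcal{H}}(\cdot)$-weak-diameter at most $L+6$.

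It remains to ensure that every member of $\mathcal{B}$ is incident to the outer face of $G$ --- the other half of ``$\mathcal{B}$ encircles $(G,\mathcal{H})$'', that every outer-incident string of $\mathcal{H}$ lies in $\bigcup\mathcal{B}$, is immediate from $\bigcup\mathcal{M}=U$ --- and the members of $\mathcal{M}$ need not have this property. The key observation is that every $M\in\mathcal{M}$ is close to the boundary: choosing $x\in M$ (say $M$ is a component of $G[A_j]$) and a string $H$ of label $j$ with $x\in H$, if $H\in\mathcal{H}_0$ then $H$ contains a vertex $w\in V(W)$ and $H$ meets both $M$ and the member of $\mathcal{M}$ containing $w$, which is incident to the outer face; if instead $H\in\mathcal{H}_1$, then $H$ meets some $H^*\in\mathcal{H}_0$, and going through the member of $\mathcal{M}$ containing a vertex of $H\cap H^*$ reaches such a boundary member in two steps. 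So one can redistribute the members of $\mathcal{M}$ that miss the outer face onto those that meet it, moving along these length-at-most-$2$ chains of shared strings; along each such string $H$ (which is contained in $U$, being in $\mathcal{H}_0\cup\mathcal{H}_1$) one also absorbs the whole run of members of $\mathcal{M}$ that $H$ passes through, which keeps the merged sets connected in $G$. Since every absorbed member meets the string it is absorbed along, the $I_{\mathcal{H}}$ of a merged set is a union of boundedly many sets each of weak diameter at most $L+6$ arranged around a few common strings; taking $L$ to be a small enough absolute constant keeps all weak diameters at most $210$, and the resulting family $\mathcal{B}$ is the desired $(210,\infty,\infty,\infty)$-encasing with $\bigcup\mathcal{B}=U$.

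The main obstacle is precisely this final redistribution step: promoting the ``within two shared-string steps of the boundary'' observation into a single globally consistent assignment that keeps $\mathcal{B}$ a partition of $U$ into connected sets, each incident to the outer face, without letting any $I_{\mathcal{H}}(B)$ grow past $210$. The delicate points are the $\mathcal{H}_1$ case of the observation (an $\mathcal{H}_1$-string carries no outer vertex, so the two-step chain is genuinely needed) and the bookkeeping forced by having to swallow, along a merging string, a possibly long run of tiny components --- harmless for the weak-diameter bound since they all meet that string, but it has to be reconciled with disjointness and with $\bigcup\mathcal{B}$ remaining exactly $U$.
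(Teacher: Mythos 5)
Your proposal has a genuine gap, and it is one you identify yourself: the ``redistribution'' step. Everything before it is a plausible variant of the layering idea from \cref{lem:outerplanar} organised around the outer facial walk, and the accounting that each component $M$ of $G[A_j]$ sits inside $\Sigma_j$ (hence $I_{\mathcal{H}}(M)$ has small weak diameter) is fine. But the lemma's real content is that one can refine these pieces into disjoint connected sets that \emph{each} contain a vertex on the outer face while keeping the weak-diameter bound, and your proposal reduces this to a one-sentence ``absorb along chains of shared strings.'' That is not a small fix. Concretely: (a) two non-boundary components may demand absorption through overlapping runs of intermediate components (a string $H_1$ passing through $M_1,M_2,M_3$ and a string $H_2$ passing through $M_3,M_4,M_5$ conflict at $M_3$), so disjointness is not automatic; (b) merges can cascade, and nothing in the sketch caps the number of weak-diameter-$(L+6)$ pieces that can end up glued together, so the $210$ bound is not established; (c) the $\mathcal{H}_1$ case genuinely needs a two-string detour, and resolving it globally requires controlling the topology of how components sit inside faces bounded by the already-placed sets. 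This is precisely what the paper's \cref{thm:outerstringinduct} is for --- an inductive argument whose four cases are there specifically to manage the conflicts you are waving away. Declaring the obstacle is not the same as overcoming it.

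You are also working harder than necessary. Since \cref{lem:outerstring} (itself a consequence of \cref{thm:outerstringinduct}) is available, the paper's proof of \cref{lem:encase1} is essentially a reduction: pass to the subgraph $G^*$ with edge set $\bigcup_{H\in\mathcal{H}_0\cup\mathcal{H}_1}E(G[H])$, replace each $H\in\mathcal{H}_1$ by $H\cup H'$ for some intersecting $H'\in\mathcal{H}_0$ so that every set in the new family $\mathcal{H}^*$ touches the outer face, apply \cref{lem:outerstring} to $(G^*,\mathcal{H}^*)$ to get a $(70,\infty)$-impression whose parts all touch the outer face, and observe that a weak-diameter-$70$ bound in $\textbf{RIG}(G^*,\mathcal{H}^*)$ becomes weak-diameter-$210$ in $\textbf{RIG}(G,\mathcal{H})$ because each $\mathcal{H}^*$-string is a union of at most two $\mathcal{H}$-strings. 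This is short precisely because the heavy lifting was done once in \cref{thm:outerstringinduct}; your sketch is, in effect, an attempt to redo that lifting inline, and it stops short of doing it.
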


\begin{proof}
    Let $G^*$ be the subgraph of $G$ with edges $\bigcup_{H\in \mathcal{H}_0 \cup \mathcal{H}_1} E(G[H])$ and no isolated vertices.
    Note that $G^*$ has the same outer face as $G$.
    For each $H\in \mathcal{H}_1$, choose some $H'\in \mathcal{H}_0$ that intersects $H$ and let $H^* = H \cup H'$.
    Let $\mathcal{H}^*=\mathcal{H}_0 \cup \{H^* : H\in \mathcal{H}_1\}$.
    Then every vertex on the outer face of $G$ is contained in some element of $\mathcal{H}^*$.
    By \cref{lem:outerstring}, there is a $(70,\infty)$-impression $(G^*,\mathcal{B})$ of $(G^*, \mathcal{H}^*)$ such that every vertex of $G^*$ incident to the outer face of $G^*$ is contained in some $B\in \mathcal{B}$.
    In particular, every vertex of $G$ incident to the outer face of $G$ is contained in some $B\in \mathcal{B}$.
    Furthermore, observe that as $I_{\mathcal{H}^*}(B)$ has weak diameter at most 70 in $\textbf{RIG}(G^*,\mathcal{H}^*)$ for each $B\in \mathcal{B}$, we also have that $I_{\mathcal{H}^*}(B)$ has weak diameter at most $3\cdot 70 = 210$ in $\textbf{RIG}(G,\mathcal{H})$ for each $B\in \mathcal{B}$.
\end{proof}

Let $G$ be a finite planar graph, $\mathcal{H}$ a spanning collection of connected vertex sets of $G$, and let $\mathcal{B}$ be a collection of disjoint connected sets of $G$ that encircles $(G,\mathcal{H})$.
Let $\mathcal{F}$ be the collection of non-empty vertex sets of $G$ that are maximal with respect to being contained in the interior of a face of $G[\bigcup_{B\in \mathcal{B}}B]$.
We call these \emph{facial} vertex sets.
We remark that these facial vertex sets are not necessarily connected sets of $G$.
Note that $\mathcal{B} \cup \mathcal{F}$ is a partition of $V(G)$.
We denote by $G(\mathcal{B})$ the graph obtained from $\textbf{IM}(G,\mathcal{B})$ by for every facial vertex set $\mathcal{F}$ of $G[\bigcup_{B\in \mathcal{B}}B]$, we add $F$ as a vertex and make it adjacent to exactly the elements $B\in \mathcal{B}$ that contain a vertex incident to the face of $G[\bigcup_{B\in \mathcal{B}}B]$ that contains $F$ (we remark that $B$ and $F$ need not touch).

We say that some $H\in \mathcal{H}$ is $d$-caged by $\mathcal{B}$ if for every $u,v\in H$, there exists some sequence of vertex sets $A_0,  \ldots , A_t$ with $t\le d$ such that
\begin{itemize}
    \item each $A_i$ is either a single element of $\mathcal{B}$ or a single element of $\mathcal{F}$ or the union of a subcollection $\mathcal{B}'\subseteq \mathcal{B}$ such that some connected set $H^*\subseteq H$ of $G$ intersects each member of $\mathcal{B}'$ and is contained in $G[\bigcup_{B\in \mathcal{B}'}B]$,
    \item $u\in A_0$, 
    \item $v\in A_t$, and
    \item for every $1\le i <t$, we have that either $H$ intersects both $A_{i-1}$ and $A_i$, or neither $A_{i-1}$ or $A_i$ is an element of $\mathcal{F}$ and they touch.
\end{itemize}
See \cref{fig:caged} for an illustration of some $7$-caged $H$.
We say that $\mathcal{B}$ is a $d$-cage of $(G,\mathcal{H)}$ if every $H\in \mathcal{H}$ is $d$-caged by $\mathcal{B}$.

\begin{figure}
    \centering
    \includegraphics[width=0.75\linewidth]{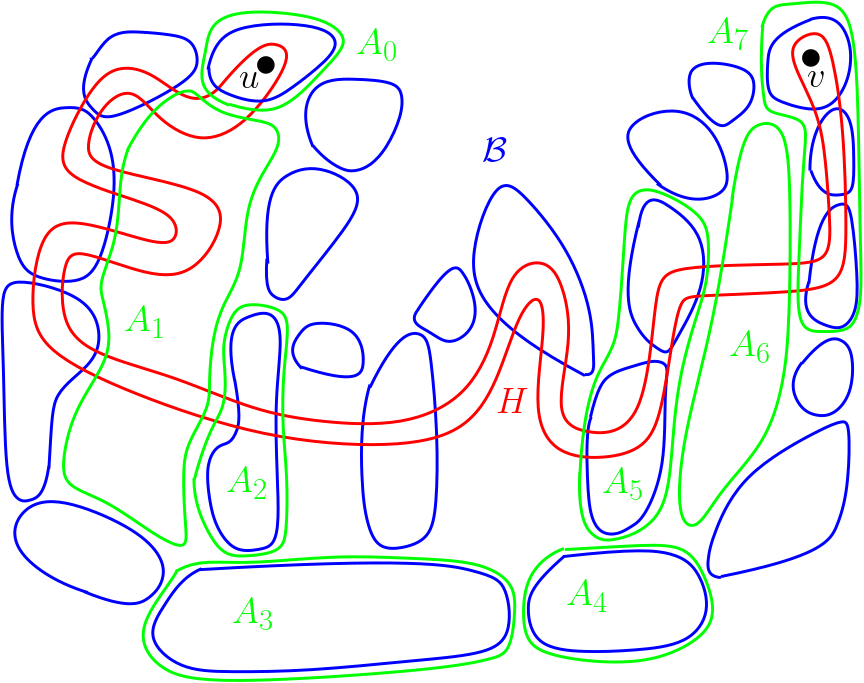}
    \caption{An illustration of some $H\in \mathcal{H}$ (coloured red) that is 7-caged by $\mathcal{B}$ (coloured blue).
    For $u,v\in H$, the vertex sets $A_0,\ldots , A_7$ (coloured green) are as in the definition of being 7-caged.
    As an example illustrating the end of the proof of \cref{lem:encase2}, we would have that $A_0=A_u$, $A_1\in \mathcal{F}_u$, $A_2,A_5\in \mathcal{A}_{u,v}$, $\{A_3,A_4\}=\mathcal{V}_{(X,Y)}$, $A_6\in \mathcal{F}_v$, and $A_7=A_v$.
    }
    \label{fig:caged}
\end{figure}

The motivation behind the notion of a $d$-cage is that it will allow us to ``cut up'' connected sets $H$ in such a way that each part is in some sense still close via the encircling $\mathcal{B}$.
Being able to do this is crucial to the inductive argument of \cref{thm:stringmainfinite}.

We say that $\mathcal{B}$ is a \emph{$(a,\infty,\infty,d)$-encasing} of $(G,\mathcal{H})$ if $\mathcal{B}$ encircles $(G, \mathcal{H})$, and
\begin{itemize}
            \item for every $B\in \mathcal{B}$, we have that $I_{\mathcal{H}}(B)$ has weak diameter at most $a$ in $\textbf{RIG}(G,\mathcal{H})$, and
            \item $\mathcal{B}$ is a $d$-cage of $(G, \mathcal{H})$.
\end{itemize}
In other words, $\mathcal{B}$ is a $(a,\infty,\infty,d)$-encasing of $(G,\mathcal{H})$ if it is both a $(a,\infty,\infty,\infty)$-encasing and a $d$-cage of $(G,\mathcal{H})$.

We will use our $(a,\infty,\infty,\infty)$-encasing to get a $(a,\infty,\infty,d)$-encasing.
However, we again need this $(a,\infty,\infty,d)$-encasing to have some stronger structure on the vertices of $G$ that $\bigcup_{B\in \mathcal{B}}B$ contains.
Let $\mathcal{H}_0$ be the elements of $\mathcal{H}$ that contain a vertex incident to the outer face of $G$, let $\mathcal{H}_1$ be the elements of $\mathcal{H}\backslash \mathcal{H}_0$ that intersect a element of $\mathcal{H}_0$, and similarly, let $\mathcal{H}_2$ be the elements of $\mathcal{H}\backslash (\mathcal{H}_0 \cup \mathcal{H}_1)$ that intersect a element of $\mathcal{H}_1$.
We say that a $(a,\infty,\infty,d)$-encasing $\mathcal{B}$ of $(G,\mathcal{H})$ is \emph{enforced} if $\bigcup_{H\in \mathcal{H}_0 \cup \mathcal{H}_1} H \subseteq \bigcup_{B\in \mathcal{B}} B \subseteq \bigcup_{H\in \mathcal{H}_0 \cup \mathcal{H}_1 \cup \mathcal{H}_2} H$, and for every $v\in \left(\bigcup_{B\in \mathcal{B}} B \right) \backslash \left( \bigcup_{H\in \mathcal{H}_0 \cup \mathcal{H}_1} H \right)$, 
there is some connected set $H'\subseteq H\in \mathcal{H}_2$ of $G$ that contains $v$, is contained in $\bigcup_{B\in \mathcal{B}} B$, and intersects $\bigcup_{H\in \mathcal{H}_0 \cup \mathcal{H}_1} H$.

\begin{lemma}\label{lem:encase2}
    Let $G$ be a finite planar graph and $\mathcal{H}$ a spanning collection of connected vertex sets of $G$.
    Then there is an enforced $(840,\infty,\infty,7)$-encasing $\mathcal{B}$ of $(G, \mathcal{H})$.
\end{lemma}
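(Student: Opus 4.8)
The plan is to start from the $(210,\infty,\infty,\infty)$-encasing $\mathcal{B}_0$ produced by \cref{lem:encase1}, so that $\bigcup_{B\in\mathcal{B}_0}B=\bigcup_{H\in\mathcal{H}_0\cup\mathcal{H}_1}H$, and then enlarge each set of $\mathcal{B}_0$ in a controlled way so that the resulting collection becomes a $7$-cage while remaining an encircling collection with weak diameter bounded by roughly $4\cdot 210=840$ and with $\bigcup B$ landing between $\bigcup_{\mathcal{H}_0\cup\mathcal{H}_1}H$ and $\bigcup_{\mathcal{H}_0\cup\mathcal{H}_1\cup\mathcal{H}_2}H$. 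The key structural fact to exploit is that after \cref{lem:encase1} the graph $\textbf{IM}(G,\mathcal{B}_0)$ is outerplanar, so its faces (the facial vertex sets $\mathcal{F}$) have a tree-like arrangement, and for a connected $H\in\mathcal{H}$ the portion of $H$ lying inside $\bigcup B\cup(\text{faces})$ can only jump between consecutive $B$'s around a face or pass through facial sets — this is exactly the bookkeeping that the $7$ in the definition of $d$-caged is designed to absorb (and matches the worked example $A_0=A_u,A_1\in\mathcal{F}_u,A_2,A_5\in\mathcal{A}_{u,v},\{A_3,A_4\}=\mathcal{V}_{(X,Y)},A_6\in\mathcal{F}_v,A_7=A_v$ sketched in the figure caption).

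The main steps I would carry out: (1) Take $\mathcal{B}_0$ from \cref{lem:encase1}. For each $B\in\mathcal{B}_0$ enlarge it by adding, for each $H\in\mathcal{H}_2$ meeting $B$, a connected piece $H'\subseteq H$ realizing the path from $v$-type vertices of $H\cap(\text{face})$ to $\bigcup_{\mathcal{H}_0\cup\mathcal{H}_1}H$; this is where the "enforced" condition gets built in and where $\bigcup B$ grows to sit inside $\bigcup_{\mathcal{H}_0\cup\mathcal{H}_1\cup\mathcal{H}_2}H$. One must check this enlargement only blows up weak diameter by a bounded multiplicative factor (each added piece lies in a single $H$, contributing $O(1)$ to distances in $\textbf{RIG}(G,\mathcal{H})$), giving the $840$. (2) Verify $\mathcal{B}$ still encircles: every outer-face vertex was already covered and we only added vertices. (3) Prove $\mathcal{B}$ is a $7$-cage: fix $H\in\mathcal{H}$ and $u,v\in H$. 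Using outerplanarity of $\textbf{IM}(G,\mathcal{B})$ (inherited, since enlargements don't destroy it after we note each $B$ still touches the outer face), reduce to a $2$-separation $(X,Y)$ of the "outer cycle structure" separating the $B$'s near $u$ from those near $v$ — exactly as in the proof of \cref{lem:outerplanar}. Then build the sequence $A_0,\dots,A_7$: $A_0=A_u$ (the $B$ or facial set containing $u$), $A_1$ a facial set bridging to the union-type set $A_2$ of $B$'s hit by $H$ on the $u$-side, $A_3,A_4$ the two $B$'s of the separator $X\cap Y$, $A_5$ the union-type set on the $v$-side, $A_6$ a facial set, $A_7=A_v$; check each consecutive pair satisfies the adjacency condition (either $H$ meets both, or they touch and neither is facial).

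The hardest part will be Step (3): making the case analysis for the sequence $A_0,\dots,A_7$ airtight, in particular handling the degenerate cases where $u$ or $v$ lies in a facial set rather than in some $B\in\mathcal{B}$, where $H$ doesn't actually reach the outer face, or where the $2$-separation degenerates (separator of size $0$ or $1$, $u$ and $v$ "on the same side"), and confirming that $7$ really suffices in every case rather than $8$ or more. A secondary subtlety is controlling the weak-diameter increase in Step (1): I need that gluing an $\mathcal{H}_2$-piece $H'$ to $B$ keeps $I_{\mathcal{H}}(B\cup\bigcup H')$ of bounded weak diameter — this works because each such $H'$ is a subset of a single $H\in\mathcal{H}_2$ which itself intersects some element of $\mathcal{H}_1$ already met by $B$, so everything newly added is within distance $2$ in $\textbf{RIG}(G,\mathcal{H})$ of the old $I_{\mathcal{H}}(B)$, and one totals up $210+$ (a small constant times $210$) to land at $840$. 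Everything else (disjointness of the enlarged sets, connectivity, the inclusion chain for $\bigcup B$) is routine once the enlargement is defined carefully, e.g. by processing the $B$'s in a fixed order and only adding vertices not yet claimed.
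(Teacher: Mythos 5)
There is a genuine gap here: your plan of ``enlarge each $B\in\mathcal{B}_0$ individually by adding pieces of $\mathcal{H}_2$-elements, then verify the $7$-cage using a $2$-separation found at verification time'' does not match how the paper actually proceeds, and it is not clear it can be made to work. The paper's proof does not enlarge the $B$'s one at a time; instead it builds a rooted tree $\mathcal{T}$ of \emph{clean separations} of the auxiliary graph $G^*=G(\mathcal{B})$ (separations $(X,Y)$ with $|X\cap Y|\in\{1,2\}$, $X\cap Y\subseteq\mathcal{B}$, chosen maximal), and for each $(X,Y)\in\mathcal{T}$ it assembles new sets $V_{(X,Y)}$ (or $V^1_{(X,Y)},V^2_{(X,Y)}$) from $R_{(X,Y)}$ (the $B$'s on the $Y$-side that some $H\in\mathcal{H}$ links to the separator) together with $H_{(X,Y)}$ (the facial vertices that some $H\in\mathcal{H}_2$ links to the separator). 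In particular the construction \emph{merges} several original $B$'s into a single new set and absorbs the relevant facial vertices; only the $B$'s untouched by any $V$-set survive in $\mathcal{B}^*$. This is what gives the decisive localization statement — for each $H\in\mathcal{H}_2$ there is a single $(X,Y)\in\mathcal{T}$ with $H\subseteq\bigcup_{B\in\mathcal{B}^*_{(X,Y)}}B\cup\bigcup_{F\in\mathcal{F}^*_{(X,Y)}}F$ — and this localization is what makes the $7$-cage bound achievable. Without it, a connected $H\in\mathcal{H}_2$ could dip in and out of faces many times, each facial excursion forcing a separate $A_i\in\mathcal{F}$ in the caging sequence (facial sets cannot be merged in the definition of $d$-caged), so the number of sets required would be unbounded; the separator elements in your Step (3) would just be two ordinary $B$'s, which do not absorb those excursions the way the $V_{(X,Y)}$ do, so the sequence $A_0,\dots,A_7$ you sketch (borrowing the figure caption's $\mathcal{V}_{(X,Y)}$ without having constructed it) cannot be filled in.

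A secondary but real issue is disjointness versus connectivity in your Step (1). You propose ``processing the $B$'s in a fixed order and only adding vertices not yet claimed,'' but if a path from a facial vertex $v$ to $\bigcup_{H\in\mathcal{H}_0\cup\mathcal{H}_1}H$ has an internal vertex already claimed by a previous $B$, the piece you add is disconnected. The paper avoids this by making the $V$-sets pairwise disjoint by construction: the tree of clean separations is built so that a child $(X',Y')$ satisfies $R_{(X,Y)}\cap Y'=\emptyset$, which guarantees the $V$-sets assembled at different nodes do not collide, and each $V_{(X,Y)}$ (resp.\ $V^i_{(X,Y)}$) is connected because it contains the separator vertex and everything in it links to the separator through $H_{(X,Y)}\cup\bigcup_{R\in R_{(X,Y)}}R$. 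The weak-diameter estimate $\le 4\cdot 210=840$ you give is in the right ballpark, but it is computed in the paper by counting the at most two separator sets plus the $\mathcal{H}_2$-layer from the $V$-set construction, not from an enlargement of a single $B$. In short, you have the right target numbers and you correctly identify the $7$-cage verification as the crux, but the key structural device — the tree of clean separations of $G(\mathcal{B})$ and the replacement of parts of $\mathcal{B}$ by the $V_{(X,Y)}$'s — is missing from your plan, and the per-$B$ enlargement does not provide a substitute.
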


\begin{proof}
    We define $\mathcal{H}_0, \mathcal{H}_1, \mathcal{H}_2$ as above in the definition of an enforced $(a,\infty,\infty,d)$-encasing of $(G,\mathcal{H})$.
    By \cref{lem:encase1}, there is a $(210,\infty,\infty,\infty)$-encasing $\mathcal{B}$ of $(G, \mathcal{H})$ with $\bigcup_{B\in \mathcal{B}} B = \bigcup_{H\in \mathcal{H}_0 \cup \mathcal{H}_1} H$.
    We will adjust this encasing to obtain the desired enforced $(840,\infty,\infty,7)$-encasing.

    Let $G^*=G(\mathcal{B})$ as above, where $\mathcal{F}$ are the additional facial vertex sets.
    Choose some $B_0\in \mathcal{B}$.
    We say that a separation $(X,Y)$ of $G^*$ is \emph{1-clean} if $B_0\in X$, $X\cup Y = V(G^*)$, $|X\cap Y|=1$, and subject to this, there is no other such $(X',Y')$ with $X'\cap Y' = X \cap Y$ and $|Y'|>|Y|$.
    A separation $(X,Y)$ of $G^*$ is \emph{2-clean} if $B_0\in X$, $X\cup Y = V(G^*)$, $X\cap Y$ has two elements and they are adjacent in $G^*$, and there is no 1-clean separation $(X^*,Y^*)$ with $X^*\cap Y^* \subset X \cap Y$ and $Y^*\subseteq Y$, and there is no other such separation $(X',Y')$ of $G^*$ with $X'\cap Y' = X \cap Y$ and $|Y'|>|Y|$.
    By abuse of notation, we shall also call $(\{B_0\}, V(G^*))$ a 1-clean separation.
    A separation $(X,Y)$ of $G^*$ is \emph{clean} if it is 1-clean or 2-clean.
    Observe that if $(X,Y)$ is a clean separation, then $X\cap Y \subseteq \mathcal{B}$.

    For a clean separation $(X,Y)$ of $G^*$, we let $R_{(X,Y)}$ be the vertices $R$ of $G^*$ in $Y\backslash \mathcal{F}$ (so $R$ is one of the vertex sets $Y\backslash \mathcal{F}=Y\cap \mathcal{B}$) such that there exists some $H\in \mathcal{H}$ that intersects both $R$ and $\bigcup_{A\in X\cap Y} A$.
    We say that a clean separation $(X',Y')$ of $G^*$ is a \emph{child} of another clean separation $(X,Y)$ if $Y'\subseteq Y$, $R_{(X,Y)} \cap Y' =\emptyset$, and subject to this, there is no other such clean separation $(X^*,Y^*)$ with $Y^*$ containing $Y'$ as a proper subset.
    We say that a clean separation $(X',Y')$ is a \emph{descendant} of another clean separation $(X,Y)$ if it there is a sequence of clean separations $(X_1,Y_1) , \ldots (X_t, Y_t)$ with $(X_1,Y_1)=(X,Y)$, $(X_t, Y_t)=(X',Y')$ and for each $1 \le i <t$, $(X_{i+1}, Y_{i+1})$ is a child of $(X_i, Y_i)$.
    Let $\mathcal{T}$ be the collection of clean separations consisting of $(\{B_0\},V(G^*))$ and all of its descendants.
    We let $\mathcal{T}_1$ be the 1-clean separations of $\mathcal{T}$, and $\mathcal{T}_2$ be the 2-clean separations of $\mathcal{T}$.

    \begin{figure}
    \centering
    \includegraphics[width=0.7\linewidth]{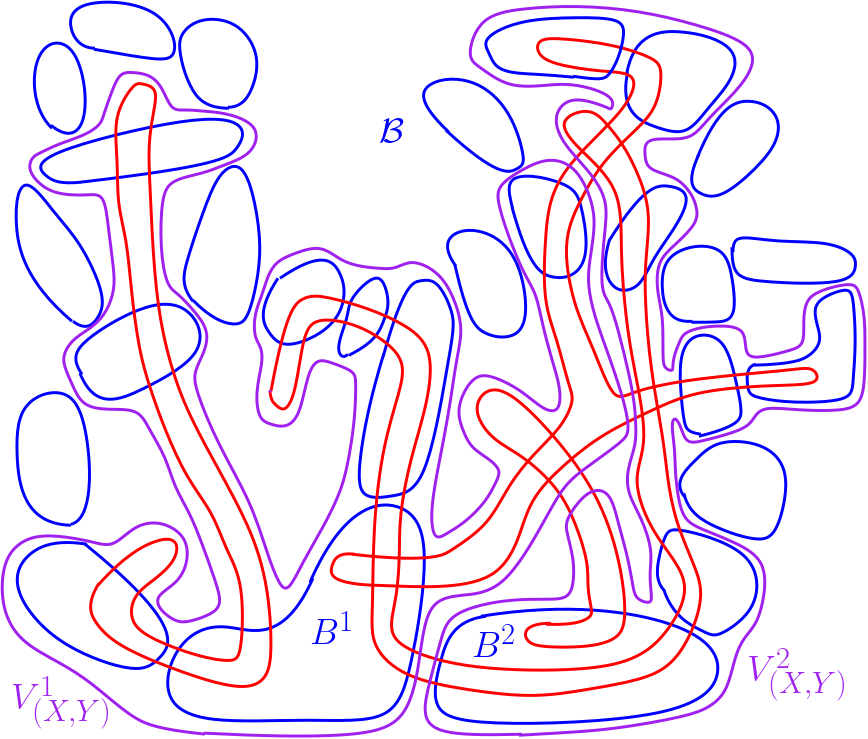}
    \caption{A choice of $V^1_{(X,Y)},V^2_{(X,Y)}$ (coloured purple) as in the proof of \cref{lem:encase2}.
    Elements of $\mathcal{B}$ are coloured blue and elements of $\mathcal{H}$ are coloured red.
    }
    \label{fig:encase2}
\end{figure}
    
    For a clean separation $(X,Y)$ of $G^*$, we let $H_{(X,Y)}$ be the vertices $v$ of $G$ contained in  some vertex set of $\mathcal{F}\cap Y$ such that there exists some $H\in \mathcal{H}_2$ containing both $v$ and intersecting $\bigcup_{A\in X\cap Y} A$.
    If $(X,Y)$ is 1-clean, 
    then we let
    $V_{(X,Y)} = H_{(X,Y)} \cup \bigcup_{R\in R_{(X,Y)}} R$.
    Note that in this case, $V_{(X,Y)}$ is a connected set of $G$ and $I_{\mathcal{H}}(V_{X,Y})$ has weak diameter at most $3\cdot 210 =630$ in $\textbf{RIG}(G,\mathcal{H})$.
    Furthermore, if 
    $v\in V_{(X,Y)} \backslash \left(\bigcup_{B\in \mathcal{B}} B \right) = V_{(X,Y)} \backslash \left( \bigcup_{H\in \mathcal{H}_0 \cup \mathcal{H}_1} H \right)$, 
    then there exists some connected set $H'\subseteq H\in \mathcal{H}_2$ of $G$ that contains $v$, is contained in $V_{(X,Y)}$, and intersects $\bigcup_{H\in \mathcal{H}_0 \cup \mathcal{H}_1} H$ (or more precisely, the unique vertex set of $X\cap Y$). 
    Otherwise, if $(X,Y)$ is 2-clean with $X\cap Y= \{B^1,B^2\}$, 
    then we choose disjoint $V_{(X,Y)}^1$, $V_{(X,Y)}^2$ such that $B^1\subseteq V_{(X,Y)}^1$, $B^2\subseteq V_{(X,Y)}^2$, 
    $V_{(X,Y)}^1 \cup V_{(X,Y)}^2 = H_{(X,Y)} \cup \bigcup_{R\in R_{(X,Y)}} R$, and both $V_{(X,Y)}^1$ and $V_{(X,Y)}^2$ are connected sets of $G$ (see \cref{fig:encase2} for an illustration).
    Then $I_\mathcal{H}(V_{(X,Y)}^1 \cup V_{(X,Y)}^2)$, and in particular both of $I_\mathcal{H}(V_{(X,Y)}^1)$ and $I_\mathcal{H}(V_{(X,Y)}^2)$ all have weak diameter at most $4 \cdot 210 = 840$ in $\textbf{RIG}(G,\mathcal{H})$.
    Furthermore, if 
    $v\in (V_{(X,Y)}^1 \cup V_{(X,Y)}^2) \backslash \left(\bigcup_{B\in \mathcal{B}} B \right) = (V_{(X,Y)}^1 \cup V_{(X,Y)}^2) \backslash \left( \bigcup_{H\in \mathcal{H}_0 \cup \mathcal{H}_1} H \right)$, 
    then there exists some connected set $H'\subseteq H\in \mathcal{H}_2$ of $G$ that contains $v$, is contained in $V_{(X,Y)}^1 \cup V_{(X,Y)}^2$, and intersects $B^1 \cup B^2 \subseteq \bigcup_{H\in \mathcal{H}_0 \cup \mathcal{H}_1} H$.

    Let $\mathcal{B}'$ be the elements of $\mathcal{B}$ that do not intersect $V_{(X,Y)}$ for any $(X,Y)\in \mathcal{T}_1$ and do not intersect $V_{(X,Y)}^1$ or $V_{(X,Y)}^2$ for any $(X,Y)\in \mathcal{T}_2$.
    At last, let $\mathcal{B}^* = \mathcal{B}' \cup \{V_{(X,Y)} : (X,Y)\in \mathcal{T}_1\}
    \cup \{V_{(X,Y)}^1, V_{(X,Y)}^2  : (X,Y)\in \mathcal{T}_2\}$.

    Clearly, we have that $\bigcup_{H\in \mathcal{H}_0 \cup \mathcal{H}_1} H \subseteq \bigcup_{B\in \mathcal{B}^*} B \subseteq \bigcup_{H\in \mathcal{H}_0 \cup \mathcal{H}_1 \cup \mathcal{H}_2} H$, and
    for every $B\in \mathcal{B}^*$, we have that $I_{\mathcal{H}}(B)$ has weak diameter at most $840$ in $\textbf{RIG}(G,\mathcal{H})$.
    We also have that for every $v\in \left(\bigcup_{B\in \mathcal{B}^*} B \right) \backslash \left( \bigcup_{H\in \mathcal{H}_0 \cup \mathcal{H}_1} H \right)$, 
    there is some connected set $H'\subseteq H\in \mathcal{H}_2$ of $G$ that contains $v$, is contained in $\bigcup_{B\in \mathcal{B}^*} B$, and intersects $\bigcup_{H\in \mathcal{H}_0 \cup \mathcal{H}_1} H$.
    To show that $\mathcal{B}^*$ provides the desired $(840,\infty,\infty,7)$-encasing of $(G, \mathcal{H})$, it now just remains to show that $\mathcal{B}^*$ is a $7$-cage of $(G,\mathcal{H})$.

    If $H\in \mathcal{H}_0\cup \mathcal{H}_1$, then $H$ is 0-caged by $\mathcal{B}^*$ since $H\subseteq \bigcup_{B\in \mathcal{B}^*} B$.
    If $H\in \mathcal{H} \backslash (\mathcal{H}_0\cup \mathcal{H}_1 \cup \mathcal{H}_2)$, then $H$ lies in some face of $G(\mathcal{B})$.
    In particular, $H$ intersects at most two elements of $\mathcal{B}^*$ and furthermore if $H$ does intersect two elements of $\mathcal{B^*}$ then they are touching, so it follows that $H$ is 3-caged by $\mathcal{B}^*$.

    It remains to consider $H\in \mathcal{H}_2$.
    To do so, we must first introduce certain subsets $\mathcal{B}^*_{(X,Y)}$ and $\mathcal{F}^*_{(X,Y)}$ of $V(G(\mathcal{B}^*))$ according to clean separations $(X,Y)$ of $G^*$.
    Fix some $H\in \mathcal{H}_2$.
    For each 1-clean separation $(X,Y)\in \mathcal{T}_1$, let $\mathcal{V}_{(X,Y)} = \{V_{(X,Y)}\}$, and for each 2-clean separation $(X,Y)\in \mathcal{T}_2$, let $\mathcal{V}_{(X,Y)} = \{V^1_{(X,Y)}, V_{(X,Y)}^2\}$.
    Let $\mathcal{F}^*$ be the facial vertex sets of $G(\mathcal{B}^*)$.
    For each clean separation $(X,Y)\in \mathcal{T}$, let $\mathcal{F}^*_{(X,Y)}$ be the elements $F$ of $\mathcal{F}^*$ adjacent in $G(\mathcal{B}^*)$ to a element of $\mathcal{V}_{(X,Y)}$.
    Now, let $\mathcal{B}^*_{(X,Y)} =N_{G(\mathcal{B}^*)} (\mathcal{F}^*_{(X,Y)})$. 

    Consider some $H\in \mathcal{H}_2$.
    By our choice of $\mathcal{B}^*$, we have that there exists some clean separation $(X,Y)\in \mathcal{T}$ such that $H$ is entirely contained in $\bigcup_{B\in \mathcal{B}^*_{(X,Y)}} B \cup \bigcup_{F\in \mathcal{F}^*_{(X,Y)}} F$.
    Observe that $G(\mathcal{B}^*)[\mathcal{B}^*_{(X,Y)} \cup \mathcal{F}^*_{(X,Y)}]$ is a planar graph in which the vertices $\mathcal{B}^*_{(X,Y)}$ are incident to the outer face and every $F\in \mathcal{F}^*_{(X,Y)}$ is adjacent to some $V\in \mathcal{V}_{(X,Y)}$, where $|\mathcal{V}_{(X,Y)}|\le 2$ and if $|\mathcal{V}_{(X,Y)}| = 2$, then the two vertices of $\mathcal{V}_{(X,Y)}$ are adjacent.

    Consider some $u,v\in H$.
    Let $H_u$ be a subset of $H$ that contains $u$ and is maximal with respect to being a connected set of $G$ and being contained in $G[\bigcup_{B\in \mathcal{B}}B]$ (or equivalently $G[\bigcup_{B\in \mathcal{B}^*}B]$).
    Let $A_u$ be the union of all $B\in \mathcal{B}^*$ that intersect $H_u$.
    Let $\mathcal{F}_u$ be elements $F$ of $\mathcal{F}^*$ such that $H$ intersects both $F$ and $A_u$.
    We define $A_v$ and $\mathcal{F}_v$ similarly.
    Crucially, note that if $H$ intersects distinct $F_1,F_2 \in \mathcal{F}^*$, then $H$ intersects some $V\in \mathcal{B}$ incident to the face of $G(\mathcal{B}^*)$ corresponding to $F_1$ that is also either contained in $\mathcal{V}_{(X,Y)}$ or adjacent to some element of $\mathcal{V}_{(X,Y)}$ in $G(\mathcal{B}^*)$.
    Let $\mathcal{A}_{u,v}$ be the collection of unions of subcollections $\mathcal{B}'\subseteq \mathcal{B}^* \backslash \mathcal{V}_{(X,Y)}$ such that some connected set $H^*\subseteq H$ of $G$ intersects each member of $\mathcal{B}'$ and is contained in $G[\bigcup_{B\in \mathcal{B}'}B]$, and some $B\in \mathcal{B'}$ touches some $V\in \mathcal{V}_{(X,Y)}$.
    To see that $H$ is 7-caged by $\mathcal{B}^*$, it is now straight forward to choose the required sequence of vertex sets $A_0, \ldots , A_t$ with $t\le 7$ from the collection $\{A_u\} \cup \mathcal{F}_u \cup (\mathcal{A}_{u,v} \cup  \mathcal{V}_{(X,Y)}) \cup \mathcal{F}_v \cup \{A_v\}$ (here we take $A_0=A_u$, $A_t=A_v$, at most one $A_i$ is contained in $\mathcal{F}_u$ (this would be $A_1$ if so), at most one is contained in $\mathcal{F}_v$ (this would be $A_{t-1}$ if so), at most two are contained in $\mathcal{A}_{u,v}$, and at most two more are contained in $\mathcal{V}_{(X,Y)}$).
    For an illustrative example, see \cref{fig:caged}.

    So, $\mathcal{B}^*$ is indeed a 7-cage of $(G, \mathcal{H})$, and thus $\mathcal{B}^*$ is an enforced $(840,\infty , \infty , 7)$-encasing of $(G,\mathcal{H})$, as desired.
\end{proof}

Let $G$ be a finite planar graph, $\mathcal{H}$ a spanning collection of connected vertex sets of $G$, and let $\mathcal{B}$ be a collection of disjoint connected sets of $G$ such that $\mathcal{B}$ encircles $(G,\mathcal{H})$.
Let $\mathcal{R}$ be the interiors of the internal faces of $G[\bigcup_{B\in \mathcal{B}}B]$.
For $u\in \bigcup_{B\in \mathcal{B}}B$ and $H\in \mathcal{H}$, we say that $u$ is \emph{bounded} by $H$ in $G[\bigcup_{B\in \mathcal{B}}B]$ if either $u\in H$ or $u$ is contained in one of the bounded regions of $\mathbb{R}^2 \backslash (G[H] \cup \bigcup_{R\in \mathcal{R}}R)$.
We say that $U\subseteq \bigcup_{B\in \mathcal{B}}B$ is \emph{bounded} by $H$ in $G[\bigcup_{B\in \mathcal{B}}B]$ if each $u\in U$ is bounded by $H$ in $G[\bigcup_{B\in \mathcal{B}}B]$.
Let $F$ be the vertices of $G$ contained in some face of $G[\bigcup_{B\in \mathcal{B}}B]$.
We say that a vertex $v\in F$ is \emph{$\mathcal{B}$-encroached} by some $H\in \mathcal{H}$ with $v\notin H$ if there exists some connected set $U\subseteq \bigcup_{B\in \mathcal{B}}B$ of $G$ and edges $u_1v_1, \ldots, u_rv_r$ between $F$ and $\bigcup_{B\in \mathcal{B}}B$ that appear consecutively along the boundary of the face of $G[\bigcup_{B\in \mathcal{B}}B]$ corresponding to $F$, such that
\begin{itemize}
    \item $U$ is bounded by $H$ in $G[\bigcup_{B\in \mathcal{B}}B]$,
    \item for each $1\le i <r$, there is a facial walk of $G$ between $u_i$ and $u_{i+1}$ that is contained in $U$,
    \item $u_1,v_1,u_r,v_r\in H$, and
    \item $v\in \{v_1, \ldots , v_{r}\}$.
\end{itemize}
See \cref{fig:encrouched} for an illustrative example.
We further say that some $H'\in \mathcal{H}$ is \emph{$\mathcal{B}$-encroached} by some $H\in \mathcal{H}$ if there is some set $F$ of vertices of $G$ contained in some face of $G[\bigcup_{B\in \mathcal{B}} B]$ and some vertex $v\in H'\cap F$ such that $v$ is $\mathcal{B}$-encroached by $H$.

\begin{figure}
    \centering
    \includegraphics[width=0.55\linewidth]{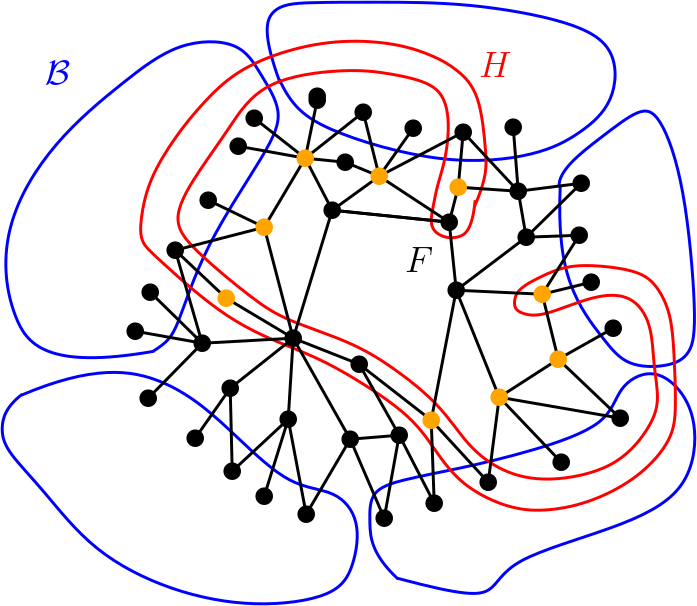}
    \caption{An encircling $\mathcal{B}$ (in blue) and some (red) $H\in \mathcal{H}$.
    The vertices of the facial vertex set $F$ that are encroached by $H$ are coloured orange.
    }
    \label{fig:encrouched}
\end{figure}

Let $\mathcal{H}_0$ be the connected sets in $\mathcal{H}$ that contain a vertex incident to the outer face of $G$.
We say that a collection of disjoint connected sets $\mathcal{B}$ of $G$ that encircles $(G,\mathcal{H})$ is a \emph{$c$-surround} of $(G,\mathcal{H})$ if
\begin{itemize}
    \item for every pair $H_u,H\in \mathcal{H}$ such that $H$ intersects but is not contained in $\bigcup_{B\in \mathcal{B}} B$, and $H_u$ contains a vertex $u \in \bigcup_{B\in \mathcal{B}}B$ that is bounded by $H$ in $G[\bigcup_{B\in \mathcal{B}}B]$, we have that $H_u$ is at distance at most $c-1$ from $H$ in $\textbf{RIG}(G, \mathcal{H} \backslash \mathcal{H}_0)$, and
    \item for every pair $H',H\in \mathcal{H}$ such that $H$ encroaches $H'$, we have that $H'$ is at distance at most $c$ from $H$ in $\textbf{RIG}(G, \mathcal{H} \backslash \mathcal{H}_0)$.
\end{itemize}
Note that since $\mathcal{B}$ encircles $(G,\mathcal{H})$, if $H$ encroaches $H'$, then $H\not\in \mathcal{H}_0$.

The motivation behind the notion of a $c$-surround is that it will allow us to add some (not necessarily yet connected until more edges are added to $G$) sets $H'$ to $\mathcal{H}$ such that $I_{\mathcal{H}}(H')$ has bounded weak diameter in $\textbf{RIG}(G, \mathcal{H} \backslash \mathcal{H}_0)$ and $H'$ is disjoint from $\bigcup_{B\in \mathcal{B}}B$.
Such $H'$ will contain for instance the vertices encroached by a given $H\in \mathcal{H}$ and will be crucial for the inductive argument of \cref{thm:stringmainfinite} in which we remove the vertices of $\bigcup_{B\in \mathcal{B}}B$.

Let $G$ be a finite planar graph and $\mathcal{H}$ a spanning collection of connected vertex sets of $G$.
Let $\mathcal{B}$ be a collection of disjoint connected subgraphs of $G$.
We say that $\mathcal{B}$ is a \emph{$(a,\infty,c,d)$-encasing} of $(G,\mathcal{H})$ if $\mathcal{B}$ encircles $(G,\mathcal{H})$, and
\begin{itemize}
            \item for every $B\in \mathcal{B}$, we have that $I_{\mathcal{H}}(B)$ has weak diameter at most $a$ in $\textbf{RIG}(G,\mathcal{H})$,
            \item $\mathcal{B}$ is a $c$-surround of $(G, \mathcal{H})$, and
            \item $\mathcal{B}$ is a $d$-cage of $(G, \mathcal{H})$.
\end{itemize}

The enforced encasing found in \cref{lem:encase2} is actually a 4-surround (which was the purpose of the technical definition of enforced encasings).

\begin{lemma}\label{lem:encase3}
    Let $G$ be a finite planar graph and $\mathcal{H}$ a spanning collection of connected vertex sets of $G$.
    Then there is a $(840,\infty,4,7)$-encasing $\mathcal{B}$ of $(G, \mathcal{H})$.
\end{lemma}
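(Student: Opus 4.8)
The plan is to show that the enforced $(840,\infty,\infty,7)$-encasing $\mathcal B$ of $(G,\mathcal H)$ produced by \cref{lem:encase2} is already a $(840,\infty,4,7)$-encasing. Being an enforced $(840,\infty,\infty,7)$-encasing, $\mathcal B$ encircles $(G,\mathcal H)$, satisfies that $I_{\mathcal H}(B)$ has weak diameter at most $840$ in $\textbf{RIG}(G,\mathcal H)$ for every $B\in\mathcal B$, and is a $7$-cage of $(G,\mathcal H)$, so the only remaining point is that $\mathcal B$ is a $4$-surround of $(G,\mathcal H)$ --- which is precisely what the technical ``enforced'' condition was designed to deliver.

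Write $\mathcal H_0,\mathcal H_1,\mathcal H_2$ for the three layers in the definition of an enforced encasing, so that $\bigcup_{H\in\mathcal H_0\cup\mathcal H_1}H\subseteq\bigcup_{B\in\mathcal B}B\subseteq\bigcup_{H\in\mathcal H_0\cup\mathcal H_1\cup\mathcal H_2}H$, and every $v\in(\bigcup_{B\in\mathcal B}B)\setminus(\bigcup_{H\in\mathcal H_0\cup\mathcal H_1}H)$ has a \emph{witness}: a connected set contained in some $\mathcal H_2$-set, containing $v$, contained in $\bigcup_{B\in\mathcal B}B$, and meeting some element of $\mathcal H_0\cup\mathcal H_1$ --- which element must in fact lie in $\mathcal H_1$, since an $\mathcal H_2$-set cannot meet an $\mathcal H_0$-set. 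The single observation driving the whole argument is this: if $H\in\mathcal H$ meets but is not contained in $\bigcup_{B\in\mathcal B}B$, then $H\notin\mathcal H_0\cup\mathcal H_1$ (otherwise $H\subseteq\bigcup_{H'\in\mathcal H_0\cup\mathcal H_1}H'\subseteq\bigcup_{B\in\mathcal B}B$), and therefore \emph{$H$ cannot meet any $\mathcal H_0$-set} (meeting one would place $H$ in $\mathcal H_1$). In particular such an $H$ is a vertex of $\textbf{RIG}(G,\mathcal H\setminus\mathcal H_0)$.

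For the first bullet of a $c$-surround, let $H$ meet but not be contained in $\bigcup_{B\in\mathcal B}B$ and let $u\in H_u\cap\bigcup_{B\in\mathcal B}B$ be bounded by $H$ in $G[\bigcup_{B\in\mathcal B}B]$. If $H_u$ meets $H$ we are done: $H_u\notin\mathcal H_0$ by the observation, so $d_{\textbf{RIG}(G,\mathcal H\setminus\mathcal H_0)}(H_u,H)\le 1\le 3$. Otherwise $u\notin H$ and $u$ lies in a bounded region of $\mathbb R^2\setminus(G[H]\cup\bigcup_{R\in\mathcal R}R)$; reading this through the planar embedding, $u$ is separated inside $G[\bigcup_{B\in\mathcal B}B]$ from the outer face of $G[\bigcup_{B\in\mathcal B}B]$ by the vertex set $H$, and also $H_u\notin\mathcal H_0$. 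Take the witness at $u$ if $u\notin\bigcup_{H'\in\mathcal H_0\cup\mathcal H_1}H'$, and otherwise an element of $\mathcal H_1$ through $u$ directly (an $\mathcal H_0$-set through $u$ is impossible here, as it would be a connected subgraph of $G[\bigcup_{B\in\mathcal B}B]$ from $u$ to the outer face and hence meet $H$, forcing $H\in\mathcal H_1$); stringing this together with the $\mathcal H_1$-set it meets and then with the $\mathcal H_0$-set \emph{that} meets yields a connected subgraph of $G[\bigcup_{B\in\mathcal B}B]$ from $u$ to the outer face, which must hit $H$. Since the $\mathcal H_0$-set of this chain cannot meet $H$, the hit occurs at the $\mathcal H_2$- or $\mathcal H_1$-set, giving a walk of length at most $3$ from $H_u$ to $H$ in $\textbf{RIG}(G,\mathcal H\setminus\mathcal H_0)$. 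For the second bullet, if $H$ $\mathcal B$-encroaches $H'$ via consecutive boundary edges $u_1v_1,\dots,u_rv_r$ with $u_1,v_1,u_r,v_r\in H$ and some $v=v_j\in H'$, then the witnessing set $U$ is bounded by $H$ and $u_j\in U\cap\bigcup_{B\in\mathcal B}B$ is a neighbour of $v$; any set $H^*$ containing the edge $u_jv$ contains $u_j$ and $v$, so $H^*\notin\mathcal H_0$ (as $v\notin\bigcup_{B\in\mathcal B}B$), $d_{\textbf{RIG}(G,\mathcal H\setminus\mathcal H_0)}(H^*,H)\le 3$ by the previous argument, $H^*$ meets $H'$, and $H'\notin\mathcal H_0$ (again as $v\notin\bigcup_{B\in\mathcal B}B$), so $d_{\textbf{RIG}(G,\mathcal H\setminus\mathcal H_0)}(H',H)\le 4$.

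The step I expect to need the most care is the translation of the topological definitions of ``bounded'' and ``$\mathcal B$-encroached'' into the clean graph statement that the relevant vertices are separated from the outer face of $G[\bigcup_{B\in\mathcal B}B]$ by $H$ inside $G[\bigcup_{B\in\mathcal B}B]$; this requires handling the planar topology precisely, including the possibility that $G[\bigcup_{B\in\mathcal B}B]$ is disconnected or that parts of its outer face get pinched off by $G[H]$. Once that is in place, the rest is bookkeeping with the layers: every potential long detour is ruled out because it would have to route through an $\mathcal H_0$-set, which would force $H$ (respectively $H'$) into $\mathcal H_1$ and contradict $H\not\subseteq\bigcup_{B\in\mathcal B}B$, so the constants $c-1=3$ and $c=4$ come out exactly.
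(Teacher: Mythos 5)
Your proof is correct and takes essentially the same route as the paper's: starting from the enforced $(840,\infty,\infty,7)$-encasing of \cref{lem:encase2}, verifying the two bullets of the $4$-surround condition via the layering $\mathcal H_0,\mathcal H_1,\mathcal H_2$, the key observation that any $H$ meeting but not contained in $\bigcup_{B\in\mathcal B}B$ lies outside $\mathcal H_0\cup\mathcal H_1$ and therefore cannot touch $\mathcal H_0$, and the chain-to-the-outer-face argument to hit $H$ at the $\mathcal H_1$- or $\mathcal H_2$-level. The one cosmetic difference is that for the second bullet you produce the neighbour $u_j\in U$ directly from the encroachment data rather than, as the paper does, simply invoking the existence of a neighbour of $v$ in $\bigcup_{B\in\mathcal B}B$ bounded by $H$, but this is the same point made explicit; your constants $c-1=3$ and $c=4$ come out exactly as in the paper.
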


\begin{proof}
    By \cref{lem:encase2}, there is an enforced $(840,\infty,\infty,7)$-encasing $\mathcal{B}$ of $(G, \mathcal{H})$.
    Let $\mathcal{H}_0,\mathcal{H}_1,\mathcal{H}_2$ be as in the definition of an enforced encasing, and furthermore, let $\mathcal{H}_3$ be the elements of $\mathcal{H}\backslash (\mathcal{H}_0 \cup \mathcal{H}_1 \cup \mathcal{H}_2)$ that intersect an element of $\mathcal{H}_2$.
    We will show that $\mathcal{B}$ is in fact a $(840,\infty,4,7)$-encasing of $(G,\mathcal{H})$.
    
    First, we show the first bullet in the definition of being a 4-surround.
    Consider some pair $H_u,H^*\in \mathcal{H}$ such that $H^*$ intersects but is not contained in $\bigcup_{B\in \mathcal{B}} B$, and $H_u$ contains a vertex $u \in \bigcup_{B\in \mathcal{B}}B$ that is bounded by $H^*$ in $G[\bigcup_{B\in \mathcal{B}}B]$.
    Clearly $H^* \in \mathcal{H}_2 \cup \mathcal{H}_3$, and therefore $u\not\in \bigcup_{H\in {\mathcal{H}_0}}H$.
    In particular, $H_u,H^* \not\in \mathcal{H}_0$.

    Suppose first that $u\in \left( \bigcup_{H\in \mathcal{H}_1} H \right)
    \backslash
    \left( \bigcup_{H\in \mathcal{H}_0} H \right)$.
    Then, there is some $H_u'\in \mathcal{H}_1$ that contains $u$ (possibly $H_u'$ is not distinct from $H_{u}$).
    Observe that $H_u'$ must intersect $H^*$ since $u$ is bounded by $H^*$ in $G[\bigcup_{B\in \mathcal{B}}B]$, $H_u\in \mathcal{H}_1$, and $H^*\in \mathcal{H}_2 \cup \mathcal{H}_3$.
    Therefore, $H_u$ is at distance at most 2 from $H^*$ in $\textbf{RIG}(G, \mathcal{H} \backslash \mathcal{H}_0)$.

    So, we may assume now that $u\in \left( \bigcup_{B\in \mathcal{B}} B \right)
    \backslash
    \left( \bigcup_{H\in \mathcal{H}_0 \cup \mathcal{H}_1} H \right)$.
    Then, by the definition of an enforced encasing, there is some connected set $P_u\subseteq H_u'\in \mathcal{H}_2$ of $G$ that contains $u$, is contained in $\bigcup_{B\in \mathcal{B}} B$, and intersects $\bigcup_{H\in \mathcal{H}_0 \cup \mathcal{H}_1} H$.
    Choose $H_u''\in \mathcal{H}_1$ that intersects $P_u$ (and thus also $H_u'$).
    Similarly to before, $H_u''\cup P_u$ must intersect $H^*$ since $u$ is bounded by $H^*$ in $G[\bigcup_{B\in \mathcal{B}}B]$, $u\in H_u''\cup P_u \subseteq \bigcup_{B\in \mathcal{B}}B$, $H_u''\in \mathcal{H}_1$, and $H^*\in \mathcal{H}_2 \cup \mathcal{H}_3$.
    In particular, $H^*$ intersects at least one of $H_u''$ or $H_u'$.
    Therefore, $H_u$ is at distance at most 3 from $H^*$ in $\textbf{RIG}(G, \mathcal{H} \backslash \mathcal{H}_0)$.
    Thus, $\mathcal{B}$ satisfies the first bullet of being a 4-surround.

    Suppose that some $H'\in \mathcal{H}$ is $\mathcal{B}$-encroached by some $H^*\in \mathcal{H}$.
    Clearly we have that $H',H^*\not\in \mathcal{H}_0$.
    Note that $H^*$ intersects but is not contained in $\bigcup_{B\in \mathcal{B}} B$.
    By definition, there is some set $F$ of vertices of $G$ contained in some face of $G[\bigcup_{B\in \mathcal{B}} B]$ and some vertex $v\in H'\cap F$ such that $v$ is $\mathcal{B}$-encroached by $H^*$.
    Let $u$ be a neighbour of $v$ in $\bigcup_{B\in \mathcal{B}} B$ that is bounded by $H^*$ in $G[\bigcup_{B\in \mathcal{B}}B]$, and let $H_{u}$ be some connected set in $\mathcal{H}$ that contains $\{u,v\}$ (possibly $H_{u}$ is not distinct from $H'$).
    Then, $H'$ is at distance at most $1+3=4$ from $H^*$ in $\textbf{RIG}(G, \mathcal{H} \backslash \mathcal{H}_0)$.

    Thus, $\mathcal{B}$ is a 4-surround of $(G, \mathcal{H})$, and in particular, a $(840,\infty,4,7)$-encasing $\mathcal{B}$ of $(G, \mathcal{H})$ as desired.
\end{proof}

We are finally ready to give the entire definition of a $(a,b,c,d)$-encasing.
We say that $\mathcal{B}$ is a \emph{$(a,b,c,d)$-encasing} of $(G,\mathcal{H})$ if $\mathcal{B}$ encircles $(G,\mathcal{H})$, and
\begin{itemize}
            \item for every $B\in \mathcal{B}$, we have that $I_{\mathcal{H}}(B)$ has weak diameter at most $a$ in $\textbf{RIG}(G,\mathcal{H})$,
            \item for every connected set $H^*\subseteq H\in \mathcal{H}$ of $G$ such that $H^*$ is contained in $\bigcup_{B\in \mathcal{B}}B$, we have that $I_{\mathcal{B}}(H^*)$ has weak diameter at most $b$ in $\textbf{IM}(G,\mathcal{B})$,
            \item $\mathcal{B}$ is a $c$-surround of $(G, \mathcal{H})$, and
            \item $\mathcal{B}$ is a $d$-cage of $(G, \mathcal{H})$.
\end{itemize}

We can at last complete the refinement of the encasing with the use of \cref{lem:outerplanar}.

\begin{lemma}\label{lem:encase4}
    Let $G$ be a finite planar graph and $\mathcal{H}$ a spanning collection of connected vertex sets of $G$.
    Then there is a $(9240,9,4,7)$-encasing $\mathcal{B}$ of $(G, \mathcal{H})$.
\end{lemma}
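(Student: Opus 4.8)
The plan is to refine the $(840,\infty,4,7)$-encasing $\mathcal{B}_0$ provided by \cref{lem:encase3} with the help of \cref{lem:outerplanar}, coarsening $\mathcal{B}_0$ so as to bring the second coordinate down from $\infty$ to $9$; the price is that the first coordinate grows to $9240=11\cdot 840$, while the third and fourth coordinates are inherited unchanged because the coarsening keeps $\bigcup_{B}B$ fixed. Concretely, I would form the graph $G'$ obtained from $G[\bigcup_{B_0\in\mathcal{B}_0}B_0]$ by contracting each member of $\mathcal{B}_0$; since $\mathcal{B}_0$ encircles $(G,\mathcal{H})$, every $B_0$ meets the outer face of $G$, so $G'$ is outerplanar, exactly as in the proof of \cref{thm:outerstring}. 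Then I would define $\mathcal{H}'$ to consist of $I_{\mathcal{B}_0}(H^*)$ over all $H\in\mathcal{H}$ and all connected components $H^*$ of $G[H\cap\bigcup_{B_0}B_0]$; this is a spanning collection of connected vertex sets of $G'$ because $\mathcal{H}$ spans $G$ (an edge of $G'$ comes from an edge $uv$ of $G$ between distinct members of $\mathcal{B}_0$, and some $H\in\mathcal{H}$ containing $uv$ witnesses a member of $\mathcal{H}'$ spanning that edge). Applying \cref{lem:outerplanar} to $(G',\mathcal{H}')$ yields an $(11,9)$-impression $(G',\mathcal{M}')$, which I would pull back by setting $\mathcal{B}=\{\bigcup_{B_0\in M'}B_0:M'\in\mathcal{M}'\}$. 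Since $\mathcal{M}'$ partitions $V(G')=\mathcal{B}_0$ into connected sets, $\mathcal{B}$ consists of disjoint connected subsets of $G$ with $\bigcup_{B\in\mathcal{B}}B=\bigcup_{B_0\in\mathcal{B}_0}B_0$, and $M'\mapsto\bigcup_{B_0\in M'}B_0$ is an isomorphism from $\textbf{IM}(G',\mathcal{M}')$ onto $\textbf{IM}(G,\mathcal{B})$.

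It then remains to verify the four conditions in the definition of a $(9240,9,4,7)$-encasing. That $\mathcal{B}$ encircles $(G,\mathcal{H})$ is immediate from the corresponding property of $\mathcal{B}_0$ and $\bigcup_{B\in\mathcal{B}}B=\bigcup_{B_0}B_0$. The third and fourth bullets are essentially free: the $c$-surround condition refers to $\mathcal{B}$ only through the set $\bigcup_{B\in\mathcal{B}}B$ (via ``bounded by $H$'' and ``$\mathcal{B}$-encroached''), which is unchanged, so $\mathcal{B}$ is a $4$-surround because $\mathcal{B}_0$ is; and since the facial vertex sets of $G[\bigcup_{B}B]$ coincide with those of $G[\bigcup_{B_0}B_0]$ and each member of $\mathcal{B}$ is a union of members of $\mathcal{B}_0$, a $7$-caging witness for $H$ with respect to $\mathcal{B}_0$ turns into one with respect to $\mathcal{B}$ of the same length by replacing each set in the sequence with the smallest $\mathcal{B}$-compatible superset (and collapsing consecutive repetitions), so $\mathcal{B}$ is a $7$-cage. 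For the $b=9$ bound, given a connected $H^*\subseteq H\in\mathcal{H}$ with $H^*\subseteq\bigcup_{B}B$, I would enlarge $H^*$ to the component $H^{**}$ of $G[H\cap\bigcup_{B_0}B_0]$ containing it, so that $I_{\mathcal{B}_0}(H^{**})\in\mathcal{H}'$; every $B\in I_{\mathcal{B}}(H^*)$ then corresponds to an $M'\in\mathcal{M}'$ meeting $I_{\mathcal{B}_0}(H^{**})$, so $I_{\mathcal{B}}(H^*)$ corresponds under the isomorphism above to a subset of $I_{\mathcal{M}'}(I_{\mathcal{B}_0}(H^{**}))$, which has weak diameter at most $9$ in $\textbf{IM}(G',\mathcal{M}')$.

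The main work — and the step I expect to be the real obstacle — is the first bullet: that $I_{\mathcal{H}}(B)$ has weak diameter at most $9240$ in $\textbf{RIG}(G,\mathcal{H})$ for each $B\in\mathcal{B}$. I would handle this by a path-lifting argument. Given $H_1,H_2\in I_{\mathcal{H}}(B)$ with $B$ coming from $M'\in\mathcal{M}'$, pick members $\widetilde H_i=I_{\mathcal{B}_0}(H_i^*)\in\mathcal{H}'$ with $H_i^*$ a component of $G[H_i\cap\bigcup_{B_0}B_0]$ meeting $M'$; then $\widetilde H_1,\widetilde H_2\in I_{\mathcal{H}'}(M')$, which has weak diameter at most $11$ in $\textbf{RIG}(G',\mathcal{H}')$, so there is a path $\widetilde H_1=\widetilde J_0,\dots,\widetilde J_m=\widetilde H_2$ of length $m\le 11$ in $\textbf{RIG}(G',\mathcal{H}')$. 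Lifting each internal $\widetilde J_j$ to a member $J_j\in\mathcal{H}$ it was built from, and taking $J_0=H_1$ and $J_m=H_2$, consecutive $\widetilde J_j,\widetilde J_{j+1}$ intersect in $G'$ — that is, share some $B_0\in\mathcal{B}_0$ — so $J_j,J_{j+1}\in I_{\mathcal{H}}(B_0)$ and are therefore within distance $840$ in $\textbf{RIG}(G,\mathcal{H})$ by the weak-diameter bound coming from $\mathcal{B}_0$; summing over the $\le 11$ steps gives $d_{\textbf{RIG}(G,\mathcal{H})}(H_1,H_2)\le 11\cdot 840=9240$. In the degenerate case $\widetilde H_1=\widetilde H_2$ the path has length $0$ but $H_1$ and $H_2$ still both meet a common $B_0\in M'$, so $d_{\textbf{RIG}(G,\mathcal{H})}(H_1,H_2)\le 840$ directly. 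The one subtlety requiring care is that a member of $\mathcal{H}'$ can arise from several members of $\mathcal{H}$, so one must fix the endpoint representatives of the lifted path to be $H_1$ and $H_2$ rather than arbitrary ones.
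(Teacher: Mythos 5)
Your proposal is correct and follows essentially the same route as the paper's proof: contract the $(840,\infty,4,7)$-encasing $\mathcal{B}_0$ from \cref{lem:encase3} to an outerplanar $G'=\textbf{IM}(G,\mathcal{B}_0)$, transfer $\mathcal{H}$ to a spanning collection $\mathcal{H}'$ on $G'$, apply \cref{lem:outerplanar} to get an $(11,9)$-impression, and pull the resulting partition $\mathcal{M}'$ back to a coarsening $\mathcal{B}$ of $\mathcal{B}_0$ (the paper writes an impression of $(G',\mathcal{B}')$, but that is a typo for $(G',\mathcal{H}')$, which is exactly what you use). You simply spell out in more detail the path-lifting argument for the weak-diameter bound $11\cdot 840=9240$ and the fact that the $c$-surround and $d$-cage conditions depend on $\mathcal{B}$ only through $\bigcup_{B\in\mathcal{B}}B$, which the paper leaves implicit.
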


\begin{proof}
    By \cref{lem:encase3}, there is a $(840,\infty,4,7)$-encasing $\mathcal{B}'$ of $(G, \mathcal{H})$.
    Let $G'=\textbf{IM}(G,\mathcal{B}')$.
    Then $G'$ is outerplanar.
    For every connected set $H^*\subseteq H\in \mathcal{H}$ of $G$ such that $H^*$ is contained in $\bigcup_{B\in \mathcal{B}'}B$, let $H'$ be the connected set of $G'$ whose vertex set is exactly the $B\in \mathcal{B}'$ such that $H^*$ intersects $B$.
    Let $\mathcal{H}'$ be the collection of all of these connected vertex sets of $G'$.

    By \cref{lem:outerplanar}, there is a $(11,9)$-impression $(G', \mathcal{M})$ of $(G',\mathcal{B}')$.
    Let $\mathcal{B} = \{\bigcup_{B\in M} B : M\in \mathcal{M}\}$.
    Clearly for every $H\in \mathcal{H}$, $I_\mathcal{M}(H)$ has weak diameter at most $9$ in $\textbf{IM}(G,\mathcal{M})$.
    Also, for every $M\in \mathcal{M}$, we have that $I_{\mathcal{H}}(M)$ has weak diameter at most $11\cdot 840 =9240$ in $\textbf{RIG}(G,\mathcal{H})$.
    
    Hence, $\mathcal{B}$ is a $(9240,9,4,7)$-encasing of $(G,\mathcal{H})$, as desired.
\end{proof}

\section{String graphs}\label{sec:string}

\cref{lem:encase4} is our main technical tool for proving \cref{thm:stringmainfinite}.
We still need to introduce the notion of fortifications in order to give the precise technical statement that will allow for the inductive proof using \cref{lem:encase4}.

Let $G$ be a planar graph and consider some additional edge $e$ added to a face $F$.
Then $e$ splits $F$ into two faces $F_e$, $F_e'$.
We call these the \emph{sides} of $e$ in $G$.
We say that a planar graph $G'$ is a \emph{fortification} of a planar graph $G$ if $G'$ can be obtained from $G$ by adding edges to faces in such a way that for every $e\in E(G')\backslash E(G)$ we can pick a side $F_e$ of $e$ in $G$ so that for every distinct $e,e'\in E(G')\backslash E(G)$, the interiors of $F_e$ and $F_{e'}$ do not intersect, and no $F_e$ contains an edge incident to the outer face of $G$.
For an example of a fortification $G'$ of a planar graph $G$, see \cref{fig:fort}.

\begin{figure}
    \centering
    \includegraphics[width=0.45\linewidth]{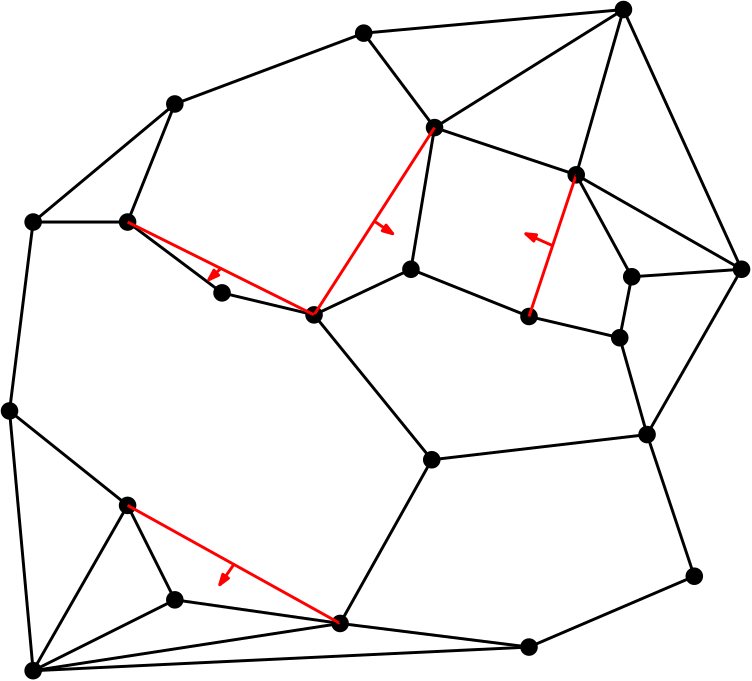}
    \caption{A fortification $G'$ of planar graph $G$ (which is the subgraph on the black edges.
    The arrows orthogonal to the additional red edges indicate their sides.}
    \label{fig:fort}
\end{figure}

Let $G$ be a planar graph and $\mathcal{H}$ a spanning collection of connected vertex sets $\mathcal{H}$ of $G$.
Let $\mathcal{H}_0$ be the collection of connected sets $H\in \mathcal{H}$ of $G$ that contain a vertex incident to the outer face of $G$.
we say that $(G',\mathcal{H'})$ is a \emph{$k$-fortification} of $(G,\mathcal{H})$ if 
\begin{itemize}
    \item $G'$ is a fortification of $G$,
    \item $\mathcal{H}'$ spans $G'$,
    \item $\mathcal{H}'$ can be obtained from $\mathcal{H}$ by adding connected sets $H'$ of $G$ such that each $I_{\mathcal{H}}(H')$ has weak diameter at most $k$ in $\textbf{RIG}(G,\mathcal{H}\backslash \mathcal{H}_0)$, and
    \item for every $uv \in E(G') \backslash E(G)$, there exists $H_u, H_v\in \mathcal{H}$ with $u\in H_u$ and $v\in H_v$, such that $d_{\textbf{RIG}(G,\mathcal{H} \backslash \mathcal{H}_0)}(H_u, H_v)\le k$.
\end{itemize}

We use the following technical statement to facilitate an inductive argument making use of \cref{lem:encase4}.

\begin{theorem}\label{thm:stringinduct}
    Let $G$ be a finite planar graph and $\mathcal{H}$ a spanning collection of connected vertex sets of $G$.
    Then there is a $(9242,80)$-impression $(G', \mathcal{M})$ of a $8$-fortification $(G',\mathcal{H}')$ of $(G,\mathcal{H})$ such that for every $M\in \mathcal{M}$ containing a vertex incident to the outer face of $G$, we have that $I_\mathcal{H'}(M)$ has weak diameter at most $9240$ in $\textbf{RIG}(G',\mathcal{H}')$, and for every $H\in \mathcal{H}$ containing a vertex incident to the outer face of $G$, we have that $I_\mathcal{M}(H)$ has weak diameter at most $9$ in $\textbf{IM}(G',\mathcal{M})$.
\end{theorem}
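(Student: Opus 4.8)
The plan is to prove \cref{thm:stringinduct} by induction on $|V(G)|$, using \cref{lem:encase4} as the engine of the inductive step. We may assume $G$ is connected, treating components separately; then since $\mathcal{H}$ spans $G$ there is an edge incident to the outer face lying in some $G[H]$, so $\mathcal{H}_0\neq\emptyset$. Apply \cref{lem:encase4} to get a $(9240,9,4,7)$-encasing $\mathcal{B}$ of $(G,\mathcal{H})$. The sets of $\mathcal{B}$ will be exactly the members of the final $\mathcal{M}$ meeting the outer face of $G$: because $\mathcal{B}$ encircles $(G,\mathcal{H})$, every vertex incident to the outer face of $G$ lies in $\bigcup_{B\in\mathcal{B}}B$, and $I_{\mathcal{H}}(B)$ has weak diameter at most $9240$ for each $B\in\mathcal{B}$, which is the first part of the conclusion. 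Set $G^{\circ}=G-\bigcup_{B\in\mathcal{B}}B$; by the encircling property no vertex of $G^{\circ}$ is incident to the outer face of $G$, so $G^{\circ}$ lives inside the internal faces of $G[\bigcup_{B\in\mathcal{B}}B]$, and since $\mathcal{H}_0\neq\emptyset$ forces $\bigcup_{B\in\mathcal{B}}B\neq\emptyset$ we have $|V(G^{\circ})|<|V(G)|$, so the recursion will make progress.

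For each internal face $F$ of $G[\bigcup_{B\in\mathcal{B}}B]$, let $G_F$ be the subgraph of $G$ induced on the vertices in the interior of $F$, and build a spanning collection $\mathcal{H}_F$ of connected vertex sets of $G_F$ consisting of (i) the maximal connected pieces of $H\cap V(G_F)$ over all $H\in\mathcal{H}$, together with (ii) new connected sets recording how a string $H$ attaches to $\partial F$ --- namely sets assembled from the vertices of $V(G_F)$ that are bounded by $H$ in $G[\bigcup_{B\in\mathcal{B}}B]$ or $\mathcal{B}$-encroached by $H$. The $4$-surround property of $\mathcal{B}$ is precisely what makes each new string $H'$ satisfy that $I_{\mathcal{H}}(H')$ has weak diameter at most $8$ in $\textbf{RIG}(G,\mathcal{H}\setminus\mathcal{H}_0)$: two strings meeting $H'$ both lie within distance $4$ of the relevant $H$ there. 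Now apply the inductive hypothesis to $(G_F,\mathcal{H}_F)$ to obtain a $(9242,80)$-impression $(G'_F,\mathcal{M}_F)$ of an $8$-fortification $(G'_F,\mathcal{H}'_F)$ of $(G_F,\mathcal{H}_F)$, where the $\mathcal{M}_F$-sets meeting the outer face of $G_F$ (the $\partial F$ side) have $I_{\mathcal{H}'_F}$-weak diameter at most $9240$ and the strings of $\mathcal{H}_F$ meeting that outer face have $I_{\mathcal{M}_F}$-weak diameter at most $9$.

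Glue the solutions across faces. Let $G'$ be $G$ with all edges added in the fortifications $G'_F$; since each such edge lies strictly inside an internal face $F$ and the recursion's chosen sides stay inside $F$, their interiors are pairwise disjoint across faces and none contains an edge incident to the outer face of $G$, so $G'$ is a fortification of $G$. Let $\mathcal{H}'=\mathcal{H}\cup\bigcup_F\mathcal{H}'_F$ (the strings of $\mathcal{H}'_F$, which already contain the attachment strings of (ii), viewed as connected sets of $G'$); one checks against the definition, using the $8$-fortification property of each $(G'_F,\mathcal{H}'_F)$ and the $4$-surround, that $(G',\mathcal{H}')$ is an $8$-fortification of $(G,\mathcal{H})$ --- in particular the clause about two strings sharing a vertex of a side $F_e$ is pulled back from the corresponding clause inside $F$. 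Finally let $\mathcal{M}=\mathcal{B}\cup\bigcup_F\mathcal{M}_F$; the $\mathcal{B}$-sets lie in $\bigcup_{B\in\mathcal{B}}B$ and each $\mathcal{M}_F$ lies in $\mathrm{int}(F)$, so these are pairwise disjoint connected sets with $\bigcup\mathcal{M}=V(G)=V(G')$, and the only members meeting the outer face of $G$ are the $B\in\mathcal{B}$.

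It remains to verify that $(G',\mathcal{M})$ is a $(9242,80)$-impression of $(G',\mathcal{H}')$. For the $\textbf{RIG}$ side, $I_{\mathcal{H}'}(M)$ has weak diameter at most $9240$ for $M\in\mathcal{B}$ by the encasing, and at most $9242$ for $M\in\mathcal{M}_F$ by the recursive impression, where the small slack $9242-9240$ absorbs the effect of original strings of $\mathcal{H}$ that straddle $\partial F$ and meet $M$ (these are close, via the surround and the boundary structure, to pieces already present in $\mathcal{H}'_F$). For the $\textbf{IM}$ side, $I_{\mathcal{M}}(H')$ has weak diameter at most $80$ in $\textbf{IM}(G',\mathcal{M})$: the $7$-cage property ties all pieces of $H$ (those inside $\bigcup_{B\in\mathcal{B}}B$ and those inside the various faces) into a sequence $A_0,\dots,A_t$ with $t\le 7$ of $\mathcal{B}$-sets, facial sets, and $H$-bounded unions of $\mathcal{B}$-sets, consecutive terms touching or both meeting $H$; each facial piece contributes, through the recursion, an $I_{\mathcal{M}_F}$-set of weak diameter at most $9$ in $\textbf{IM}(G'_F,\mathcal{M}_F)$ (the purpose of the boundary clause), and combining these along the cage gives the bound, the new strings of $\mathcal{H}'$ being handled identically since by construction they stay close in the relevant region intersection graph to strings of $\mathcal{H}$. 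The main obstacle is exactly this last paragraph together with the definition of $\mathcal{H}_F$ in the inductive step: one must set up the attachment strings so that the $4$-surround and $7$-cage of the encasing translate cleanly into the $8$-fortification data and the ``$\le 9$ on the boundary'' control after recursion, and then push the resulting constants through the gluing without loss; everything else is bookkeeping.
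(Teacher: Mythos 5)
Your proposal is correct and follows essentially the same strategy as the paper's proof: induct on $|V(G)|$; apply \cref{lem:encase4} to obtain a $(9240,9,4,7)$-encasing $\mathcal{B}$ whose elements become the boundary sets of $\mathcal{M}$; use the $4$-surround to bound the weak diameter of the attachment/encroached strings in $\textbf{RIG}(G,\mathcal{H}\setminus\mathcal{H}_0)$ by $8$; add fortification edges inside faces to make these attachment strings connected; recurse on the remainder; and use the $7$-cage (giving a sequence of at most $8$ pieces each of weak diameter at most $9$ in the induced-minor graph) to obtain the $80$ bound. The only superficial difference from the paper is that you recurse face-by-face while the paper recurses once on $G^*\setminus\bigcup_{B\in\mathcal{B}}B$ (which is equivalent since its components lie in distinct faces), and your collection $\mathcal{H}_F$ lists the pieces of $H\cap V(G_F)$ as separate strings even though the paper absorbs them into the single connected attachment set $R_{F,H}$ — a harmless redundancy that does not affect correctness.
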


\begin{proof}
    We shall argue inductively on $|V(G)|$ (trivially the theorem holds for small $|V(G)|$).
    Let $\mathcal{H}_0$ be the collection of connected sets $H\in \mathcal{H}$ that contain a vertex incident to the outer face of $G$.
    By \cref{lem:encase4}, there is a $(9240,9,4,7)$-encasing $\mathcal{B}$ of $(G, \mathcal{H})$.

    Let $\mathcal{F}$ be the facial vertex sets of $G(\mathcal{B})$.
    Consider some $F\in \mathcal{F}$ and $H\in \mathcal{H}$ such that $H$ intersects both $\bigcup_{B\in \mathcal{B}}B$ and $F$.
    Let $R_{F,H}'$ be the vertices of $F$ that are encroached by $H$, and let $R_{F,H}=R_{F,H}' \cup (F\cap H)$.
    Consider some $H'\in \mathcal{H}$ intersecting $R_{F,H}$ but not $H$.
    Then, $H'$ intersects $R_{F,H}'$.
    Since $\mathcal{B}$ is a $4$-surround of $(G,\mathcal{H})$, the distance between $H'$ and $H$ in $\textbf{RIG}(G,\mathcal{H} \backslash \mathcal{H}_0)$ is at most 4.
    Therefore, $I_{\mathcal{H}}(R_{F,H})$ has weak diameter at most $2\cdot 4 =8$ in $\textbf{RIG}(G,\mathcal{H}\backslash \mathcal{H}_0)$.
    We wish to add edges to $G$ so that $R_{F,H}$ is a connected set.

    Consider some $v\in R_{F,H}$.
    By definition of being encroached, there exists connected set $U\subseteq \bigcup_{B\in \mathcal{B}}B$ of $G$ and edges $u_1v_1, \ldots, u_rv_r$ between $F$ and $\bigcup_{B\in \mathcal{B}}B$ that appear consecutively along the boundary of the face of $G[\bigcup_{B\in \mathcal{B}}B]$ corresponding to $F$, such that
\begin{itemize}
    \item $U$ is bounded by $H$ in $G[\bigcup_{B\in \mathcal{B}}B]$,
    \item for each $1\le i <r$, there is a facial walk of $G$ between $u_i$ and $u_{i+1}$ that is contained in $U$,
    \item $u_1,v_1,u_r,v_r\in H$, and
    \item $v\in \{v_1, \ldots , v_{r}\}$.
\end{itemize}
    Clearly $\{v_1, \ldots , v_{r}\}\subseteq R_{F,H}$.
    Let $E_{F,H}$ be the collection of such edges $v_iv_{i+1}$ (across each $v\in R_{F,H}$) that are not already an edge of $G$.
    For each such $v_iv_{i+1}\in E_{F,H}$, let $F_{v_iv_{i+1}}$ be the side of $v_iv_{i+1}$ drawn in the corresponding face of $G[\bigcup_{B\in \mathcal{B}}B]$ that contains the facial walk of $G$ between $u_i$ and $u_{i+1}$ that is contained in $U$ (note that the vertices of this facial walk are bounded $H$ in $G[\bigcup_{B\in \mathcal{B}}B]$ and the only two other vertices incident to this face are $v_i,v_{i+1}$).
    Since $\mathcal{B}$ is a 4-surround of $(G,\mathcal{H})$, it follows that for every $H'\in \mathcal{H}$ that contains a vertex incident to $F_{v_iv_{i+1}}$, the distance between $H$ and $H'$ in $\textbf{RIG}(G,\mathcal{H} \backslash \mathcal{H}_0)$ is at most $4$.
    This in particular implies that $F_{v_iv_{i+1}}$ will satisfy the 4th bullet in the definition of a $8$-fortification of $(G,\mathcal{H})$.
    This in particular implies that no vertex incident to $F_{v_iv_{i+1}}$ is incident to the outerface of $G$.
    Then observe that $R_{F,H}$ is a connected set in $G\cup E_{F,H}$.
    For an illustration of such a $R_{F,H}, E_{F,H}$, see \cref{fig:R_FH}.
    Let $G^*$ be the planar graph obtained from adding these edges from the union of all such $E_{F,H}$.
    Let $\mathcal{R}$ be the collection of all such $R_{F,H}$. Then every element of $\mathcal{R}$ is a connected set of $G^*$.

\begin{figure}
    \centering
    \includegraphics[width=0.7\linewidth]{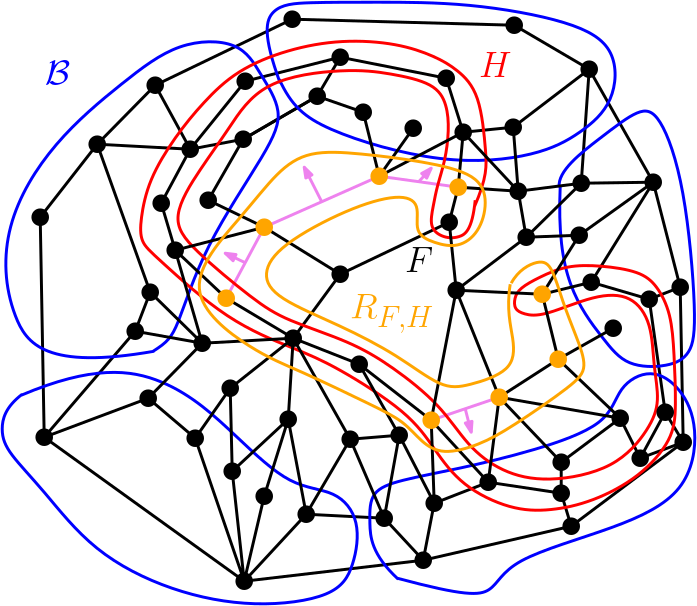}
    \caption{The (orange) vertex set $R_{F,H}$ given the (blue) encasing $\mathcal{B}$, the (red) $H\in \mathcal{H}$, and $F\in \mathcal{F}$.
    The orange vertices are the vertices of $R_{F,H}'$.
    The pink edges are $E_{F,H}$, and the arrows orthogonal to them indicate their sides.}
    \label{fig:R_FH}
\end{figure}

    Let $\mathcal{H}_C$ be the elements $H$ of $\mathcal{H}$ that are contained in $\bigcup_{B\in \mathcal{B}}B$, and
    let $\mathcal{H}_I$ be the elements $H$ of $\mathcal{H}$ that intersect but are not contained in $\bigcup_{B\in \mathcal{B}}B$.
    Let $\mathcal{R}^*= (\mathcal{H}\backslash (\mathcal{H}_C \cup \mathcal{H}_I)) \cup \mathcal{R}$.
    Let $G_0^*=G^*\backslash \left( \bigcup_{B\in \mathcal{B}} B \right)$, and let $\mathcal{R}_0^*$ be the elements of $\mathcal{R}^*$ that contain a vertex incident to the outer face of $G_0$.
    Note that $\mathcal{R}^*$ is a spanning collection of connected vertex sets of $G_0$ and that $\mathcal{R}\subseteq \mathcal{R}^*_0$.

    By the inductive hypothesis, there is a $(9242,80)$-impression $(G_0', \mathcal{M}')$ of a $8$-fortification $(G_0',\mathcal{R}')$ of $(G_0,\mathcal{R}^*)$ such that 
    for every $M\in \mathcal{M}'$ containing a vertex incident to the outer face of $G_0'$, we have that $I_{\mathcal{R}'}(M)$ has weak diameter at most $9240$ in $\textbf{RIG}(G_0',\mathcal{R}')$,
    for every $H\in \mathcal{R}_0^*$, we have that $I_{\mathcal{M}'}(H)$ has weak diameter at most $9$ in $\textbf{IM}(G_0', \mathcal{M'})$.
    Let $G'$ be obtained from $G$ by adding the edges in $E(G_0')\backslash E(G)$.
    Then, $G'$ is a fortification of $G$.
    Let $\mathcal{H}' = \mathcal{H} \cup \mathcal{R}'$.
    Then, $(G',\mathcal{H}')$ is furthermore a $8$-fortification of $(G, \mathcal{H})$.
    Let $\mathcal{M}= \mathcal{B} \cup  \mathcal{M}'$.
    It remains to show that $(G',\mathcal{M})$ provides the desired impression of $(G',\mathcal{H}')$.

    If $M\in \mathcal{M}$ contains a vertex incident to the outer face of $G'$, then $M\in \mathcal{B} = \mathcal{M} \backslash \mathcal{M}'$.
    In particular, $I_{\mathcal{H}}(M)=I_{\mathcal{H}'}(M)$ has weak diameter at most 9240 in $\textbf{RIG}(G,\mathcal{H})$, and therefore also in $\textbf{RIG}(G',\mathcal{H}')$.
    Consider some $H\in \mathcal{H}'$ that is contained in $\mathcal{H}_C$ (which in particular covers the case that $H$ contains a vertex incident to the outer face of $G'$).
    Then, $H\subseteq \bigcup_{B\in \mathcal{B}}$, so since $\mathcal{B}$ is a $(9240,9,4,7)$-encasing of $G$, $I_{\mathcal{B}}(H)$ has weak diameter at most $9$ in $\textbf{IM}(G,\mathcal{B})$. Therefore $I_{\mathcal{M}'}(H)$ has weak diameter at most $9$ in $\textbf{IM}(G,\mathcal{M})$.
    It remains to show that $(G',\mathcal{M})$ is a $(9242,80)$-impression of $(G',\mathcal{H}')$.

    If $M\in \mathcal{M}'\subseteq \mathcal{M}$ does not contain a vertex incident to the outer face of $G_0'$, then $I_{\mathcal{H}'}(M)=I_{\mathcal{R}'}(M)$ has weak diameter at most 9242 in $\textbf{RIG}(G_0',\mathcal{R}')$, and therefore also in $\textbf{RIG}(G',\mathcal{H}')$.
    If $M\in \mathcal{M}'\subseteq \mathcal{M}$ contains a vertex incident to the outer face of $G_0'$, then $I_{\mathcal{R}'}(M)$ has weak diameter at most 9240 in $\textbf{RIG}(G_0',\mathcal{R}')$.
    Since $I_{\mathcal{H}'}(M) \subseteq I_{\mathcal{R}'}(M) \cup  N_{\textbf{RIG}(G',\mathcal{H}')} ( I_{\mathcal{R}'}(M))$, it follows that $I_{\mathcal{H}'}(M)$ has weak diameter at most $9240+2 = 9242$ in $\textbf{RIG}(G',\mathcal{H}')$.
    So, $(G',\mathcal{M})$ is a $(9242,\infty)$-impression of $(G',\mathcal{H}')$.

    If $H\in \mathcal{R}' \subseteq \mathcal{H}'$, then $I_{\mathcal{M}'}(H)=I_{\mathcal{M}}(H)$ has weak diameter at most 80 in $\textbf{IM}(G_0', \mathcal{M}')$ and therefore also in $\textbf{IM}(G', \mathcal{M})$.
    To show that $(G',\mathcal{M}')$ is a $(9242,80)$-impression of $(G',\mathcal{H}')$, it now just remains to show that if $H\in \mathcal{H}_I$, then $I_\mathcal{M}(H)$ has weak diameter at most $80$ in $\textbf{IM}(G',\mathcal{M})$.

    Fix some $H\in \mathcal{H}_I$.
    Consider some $u,v\in H$ and let $M_u,M_v\in \mathcal{M}$ be such that $u\in M_u$ and $v\in M_v$.
    Since $\mathcal{B}$ is a 7-cage of $(G,\mathcal{H})$, there exists some sequence of vertex sets $A_0,  \ldots , A_t$ with $t\le 7$ such that
\begin{itemize}
    \item each $A_i$ is either a single element of $\mathcal{B}$ or a single element of $\mathcal{F}$ or the union of a subcollection $\mathcal{B}'\subseteq \mathcal{B}$ such that some connected set $H^*\subseteq H$ of $G$ (or equivalently $G'$) intersects each member of $\mathcal{B}'$ and is contained in $G[\bigcup_{B\in \mathcal{B}'}B]=G'[\bigcup_{B\in \mathcal{B}'}B]$,
    \item $u\in A_0$, 
    \item $v\in A_t$, and
    \item for every $1\le i <t$, we have that either $H$ intersects both $A_{i-1}$ and $A_i$, or neither $A_{i-1}$ or $A_i$ is an element of $\mathcal{F}$ and they touch.
\end{itemize}
If $A_i\in \mathcal{B}\subseteq \mathcal{M}$, then clearly $\{A_i\}$ has weak diameter 0 in $\textbf{IM}(G',\mathcal{M})$.
If $A_i$ is the union of a subcollection $\mathcal{B}'\subseteq \mathcal{B}$ such that some connected set $H^*\subseteq H$ of $G$ intersects each member of $\mathcal{B}'$ and is contained in $G[\bigcup_{B\in \mathcal{B}'}B]$, then it follows from $\mathcal{B}$ being a $(9240,9,4,7)$-encasing of $(G, \mathcal{H})$ that $A_i$ has weak diameter at most $9$ in $\textbf{IM}(G,\mathcal{B})$ and therefore in $\textbf{IM}(G',\mathcal{M})$.
If $A_i$ is a single element $F$ of $\mathcal{F}$, then $R_{F,H}$ contains $H\cap F$.
As $R_{F,H} \in \mathcal{R}_0^*$, we have that $I_{\mathcal{M'}}(R_{F,H})$ has weak diameter at most 9 in $\textbf{IM}(G_0,\mathcal{M}')$, and therefore $I_{\mathcal{M}}(R_{F,H})$ has weak diameter at most 9 in $\textbf{IM}(G',\mathcal{M})$.
As $t\le 7$, it now follows that $d_{\textbf{IM}(G',\mathcal{M})}(M_u,M_v) \le 7+ 8\cdot 9  =80$.
Therefore, $I_\mathcal{M}(H)$ has weak diameter at most $80$ in $\textbf{IM}(G',\mathcal{M})$.

Hence $(G',\mathcal{M})$ is a $(9242,80)$-impression of $(G',\mathcal{H}')$, as desired.
\end{proof}

By \cref{thm:stringinduct} and \cref{lem:impression2}, we obtain the following.

\begin{theorem}\label{thm:stringimpression}
    Let $G$ be a finite planar graph and $\mathcal{H}$ a spanning collection of connected vertex sets of $G$.
    Then there is a $(73936,80)$-impression $(G', \mathcal{M})$ of $(G',\mathcal{H})$, where $G'$ is a fortification of $G$ and $\mathcal{H}$ $8$-spans $G'$.
\end{theorem}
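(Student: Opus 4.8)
The plan is to combine \cref{thm:stringinduct} with \cref{lem:impression2}: the former already produces an impression of a fortification $(G',\mathcal{H}')$ extending $(G,\mathcal{H})$, and the latter lets us ``forget'' the extra sets $\mathcal{H}'\setminus\mathcal{H}$ at the cost of scaling the first impression parameter. Concretely, I would first apply \cref{thm:stringinduct} to $(G,\mathcal{H})$ to obtain an $8$-fortification $(G',\mathcal{H}')$ of $(G,\mathcal{H})$ together with a $(9242,80)$-impression $(G',\mathcal{M})$ of $(G',\mathcal{H}')$. By the definition of an $8$-fortification we have $\mathcal{H}\subseteq\mathcal{H}'$, $\mathcal{H}'$ spans $G'$, and every $H'\in\mathcal{H}'\setminus\mathcal{H}$ is a connected set of $G$ with $I_{\mathcal{H}}(H')$ of weak diameter at most $8$ in $\textbf{RIG}(G,\mathcal{H}\setminus\mathcal{H}_0)$, where $\mathcal{H}_0$ is the collection of members of $\mathcal{H}$ meeting the outer face of $G$.

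The one point requiring care is that the weak-diameter bound supplied by the fortification lives in $\textbf{RIG}(G,\mathcal{H}\setminus\mathcal{H}_0)$, whereas \cref{lem:impression2} wants a bound in $\textbf{RIG}(G',\mathcal{H})$. This transfers for free: the graph $\textbf{RIG}(\cdot,\mathcal{H})$ depends only on which pairs of members of $\mathcal{H}$ intersect and not on the host graph's edge set, so $\textbf{RIG}(G',\mathcal{H})=\textbf{RIG}(G,\mathcal{H})$; and $\textbf{RIG}(G,\mathcal{H}\setminus\mathcal{H}_0)$ is the subgraph of $\textbf{RIG}(G,\mathcal{H})$ induced on $\mathcal{H}\setminus\mathcal{H}_0$, so distances there are no smaller than in $\textbf{RIG}(G,\mathcal{H})$. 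Hence each $I_{\mathcal{H}}(H')$ with $H'\in\mathcal{H}'\setminus\mathcal{H}$ has weak diameter at most $8$ in $\textbf{RIG}(G',\mathcal{H})$. Since $\mathcal{H}$ spans $G$ we have $V(G')=V(G)=\bigcup_{H\in\mathcal{H}}V(H)$ (discarding, if necessary, any isolated vertices of $G$ not covered by $\mathcal{H}$). Thus \cref{lem:impression2} applies with $k=8$, $x=9242$, $y=80$, and shows that $(G',\mathcal{M})$ is a $(73936,80)$-impression of $(G',\mathcal{H})$.

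Finally I would verify that $\mathcal{H}$ $8$-spans $G'$. For an edge of $G'$ that already lies in $G$ this is immediate from $\mathcal{H}$ spanning $G$ (take a common member $H$, giving distance $0$). For an added edge $uv\in E(G')\setminus E(G)$, choose $H_u,H_v\in\mathcal{H}$ with $u\in H_u$ and $v\in H_v$; since $u$ and $v$ are both incident to the face $F_e$ of $G$ in which $e=uv$ was drawn, the last bullet in the definition of an $8$-fortification applies to $H_u,H_v$ and gives $d_{\textbf{RIG}(G,\mathcal{H}\setminus\mathcal{H}_0)}(H_u,H_v)\le 8$, hence $d_{\textbf{RIG}(G',\mathcal{H})}(H_u,H_v)\le 8$ by the comparison of host graphs above. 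So $\mathcal{H}$ $8$-spans $G'$, which completes the proof. All the substantive work is already contained in \cref{thm:stringinduct}; the only thing to get right here is the bookkeeping between the various region intersection graphs in which the weak diameters are measured, so I do not expect any genuine obstacle at this step.
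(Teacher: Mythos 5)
Your proposal is correct and is exactly the approach the paper takes: the paper derives this statement in a single sentence from \cref{thm:stringinduct} and \cref{lem:impression2}, and your write-up simply spells out the (genuine, if minor) bookkeeping the paper leaves implicit — that $\textbf{RIG}(G',\mathcal{H})=\textbf{RIG}(G,\mathcal{H})$, that distances only shrink when passing from $\textbf{RIG}(G,\mathcal{H}\setminus\mathcal{H}_0)$ to $\textbf{RIG}(G,\mathcal{H})$, and that the fourth bullet of the $8$-fortification definition gives the $8$-spanning of $G'$ (which is what the statement should read; the ``$8$-spans $G$'' in the theorem as printed appears to be a typo, since $\mathcal{H}$ spans and hence $0$-spans $G$ by hypothesis).
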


Since for every finite string graph $S$ there is a finite planar graph $G$ and a spanning collection $\mathcal{H}$ of connected vertex sets of $G$ such that $\textbf{RIG}(G,\mathcal{H}) = S$ \cite{lee2016separators}, by \cref{thm:stringimpression} and \cref{prop:impression}, we finally get that string graphs are quasi-isometric to planar graphs.

\begin{theorem}\label{thm:stringquasi}
    Let $S$ be a finite string graph. Then there is a planar graph $G$ and a function $f:V(S) \to V(G)$ such that for all $u,v\in V(S)$,
    \[
    \frac{1}{73944} d_S(u,v) - \frac{73936}{73944}  \le d_G(f(u),f(v))
    \le 160 d_S(u,v), 
    \]
    and for every $v\in V(G)$ there exists some $u\in V(S)$ such that $d_G(f(u),v)\le 80$.
\end{theorem}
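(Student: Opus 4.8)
The plan is to read off \cref{thm:stringquasi} from \cref{thm:stringimpression} and \cref{prop:impression}, together with Lee's theorem~\cite{lee2016separators} that a finite string graph is realised as $\textbf{RIG}(G,\mathcal{H})$ for a finite planar graph $G$ and a spanning collection $\mathcal{H}$ of connected vertex sets of $G$. So first I would fix such $G$ and $\mathcal{H}$ with $\textbf{RIG}(G,\mathcal{H})=S$ (we may assume $G$ has no isolated vertices), identify $\mathcal{H}$ with $V(S)$, and apply \cref{thm:stringimpression} to obtain a fortification $G'$ of $G$ such that $(G',\mathcal{M})$ is a $(73936,80)$-impression of $(G',\mathcal{H})$ with $\mathcal{H}$ $8$-spanning $G'$. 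Here I would record two observations. First, $\textbf{RIG}$-adjacency only records whether two members of $\mathcal{H}$ share a vertex, which does not change when edges are added to the host graph, and each $H\in\mathcal{H}$ stays connected in $G'\supseteq G$; hence $\textbf{RIG}(G',\mathcal{H})=\textbf{RIG}(G,\mathcal{H})=S$ and $d_{\textbf{RIG}(G',\mathcal{H})}=d_S$. Second, $\mathcal{M}$ is a collection of pairwise disjoint connected vertex sets with $\bigcup_{M\in\mathcal{M}}M=V(G')$, so $\textbf{IM}(G',\mathcal{M})$ is a minor of $G'$, which is planar by the definition of a fortification; thus $\textbf{IM}(G',\mathcal{M})$ is a planar graph.

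Now I would apply \cref{prop:impression} with $x=73936$, $y=80$ and $z=8$. Since $x+z=73944$ and $2y=160$, it yields a map $f\colon\mathcal{H}\to\mathcal{M}$ with
\[
\frac{1}{73944}\,d_S(H_1,H_2)-\frac{73936}{73944}\;\le\;d_{\textbf{IM}(G',\mathcal{M})}(f(H_1),f(H_2))\;\le\;160\,d_S(H_1,H_2)
\]
for all $H_1,H_2\in\mathcal{H}$, and with every $M\in\mathcal{M}$ lying within $\textbf{IM}(G',\mathcal{M})$-distance $80$ of some $f(H)$. Taking the planar graph of the statement to be $\textbf{IM}(G',\mathcal{M})$ (so its vertex set is $\mathcal{M}$) and reading $f$ through the identification $\mathcal{H}=V(S)$, this is verbatim the conclusion of \cref{thm:stringquasi}.

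Essentially all of the work is already contained in \cref{thm:stringimpression}; at this level the only points needing care are bookkeeping: that the arithmetic $x+z=73944$ and $2y=160$ matches the stated constants, that passing to the fortification $G'$ neither alters $\textbf{RIG}(G,\mathcal{H})$ nor destroys planarity, and that the $8$-spanning hypothesis is available for the host graph $G'$ of the impression (if \cref{thm:stringimpression} is read as providing it for $G$, note the relevant $\textbf{RIG}$-distances are unchanged in $G'$ and every vertex of $G'$ is a vertex of $G$, hence covered by some member of $\mathcal{H}$, so the condition transfers to $G'$). I would expect this last spanning point to be the only place where a reader might pause.
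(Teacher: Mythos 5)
Your proposal follows the paper's proof exactly: realize $S=\textbf{RIG}(G,\mathcal{H})$ via Lee's theorem, apply \cref{thm:stringimpression} and then \cref{prop:impression} with $x=73936$, $y=80$, $z=8$, take the planar graph of the conclusion to be $\textbf{IM}(G',\mathcal{M})$ (a minor of the planar graph $G'$), and verify the arithmetic $x+z=73944$, $\tfrac{x}{x+z}=\tfrac{73936}{73944}$, $2y=160$. The checks that $\textbf{RIG}(G',\mathcal{H})=\textbf{RIG}(G,\mathcal{H})=S$ and that $\textbf{IM}(G',\mathcal{M})$ is planar are correct and worth making explicit.

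One small slip in your aside on the $8$-spanning. Your argument that ``the relevant $\textbf{RIG}$-distances are unchanged in $G'$ and every vertex of $G'$ is covered by some $H\in\mathcal{H}$, so the condition transfers to $G'$'' does not establish that $\mathcal{H}$ $8$-spans $G'$. The $z$-spanning condition is about \emph{edges}, not vertices: for each new edge $uv\in E(G')\setminus E(G)$ one must exhibit $H_u\ni u$, $H_v\ni v$ with $d_{\textbf{RIG}(G',\mathcal{H})}(H_u,H_v)\le 8$, and a priori such $u,v$ could lie in members of $\mathcal{H}$ that are far apart. The correct justification comes from the last bullet in the definition of an $8$-fortification (used to prove \cref{thm:stringinduct}): both endpoints of a new edge $e$ lie in the face $F_e$, so any $H_u\ni u$ and $H_v\ni v$ in $\mathcal{H}$ satisfy $d_{\textbf{RIG}(G,\mathcal{H}\setminus\mathcal{H}_0)}(H_u,H_v)\le 8$, which bounds the distance in $\textbf{RIG}(G',\mathcal{H})=\textbf{RIG}(G,\mathcal{H})$ from above. (The statement of \cref{thm:stringimpression} says $\mathcal{H}$ $8$-spans $G$; since $\mathcal{H}$ already $0$-spans $G$ by hypothesis, this is presumably a typo for $G'$, which is the graph of the impression and the one that matters when invoking \cref{prop:impression}.) With that correction your proof is complete and coincides with the paper's.
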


\cref{thm:stringmainfinite} (and thus \cref{thm:stringmain}) then finally follows from \cref{thm:stringquasi} and \cref{lem:quasibi}.

\section{Riemannian surfaces}\label{sec:Riemannian}

In this section, we prove \cref{thm:RiemannianTriangulation}.

In addition to \cref{thm:stringimpression}, our main tool is the following recent theorem of Georgakopoulos and Vigolo \cite{georgakopoulos2026triangulating}.
(We delay giving certain definitions on Riemannian surfaces until after the proof of our last required graph theoretic lemma (which is \cref{lem:metricEuler}).)

\begin{theorem}[Georgakopoulos, Vigolo \cite{georgakopoulos2026triangulating}]\label{thm:triangulate}
    Let $\Sigma$ be a complete Riemannian surface.
    Then $\Sigma$ admits a piecewise smooth triangulation $H\subset \Sigma$ by a locally finite $(0,1]$-metric graph $H$ such that $H \hookrightarrow \Sigma$ is a quasi-isometry.
\end{theorem}

Roughly speaking, \cref{thm:triangulate} allows us to just deal with modifying some triangulation by a metric graph to obtain a quasi-isometric triangulation by a simplicial graph as in \cref{thm:RiemannianTriangulation}.

We begin with considering metric graphs in the plane.
A graph is an internally triangulated planar graph if it has an embedding in the plane in which every face except possibly the outerface is a triangle.
Observe that in the case of internally triangulated planar graphs, \cref{thm:stringimpression} simplifies to directly finding an impression of $(G,\mathcal{H})$ since in this case $G'=G$.
We require a version more directly applicable to metric graphs.
We obtain this from \cref{thm:stringimpression} with essentially the same trick as in \cref{Lem:metrictostring} and using \cref{prop:impression}.

\begin{lemma}\label{lem:metricinternaltriangulate}
    Let $H$ be a finite $(0,1]$-metric internally triangulated planar graph.
    Then there exists a partition of $V(H)$ into connected vertex sets $\mathcal{M}$ such that for every $u,v\in V(H)$, we have that 
    \[
    \frac{1}{225000}d_H(u,v) - 1
    \le
    d_{\textbf{IM}(H,\mathcal{M})}(f(u),f(v))
    \le
    160d_H(u,v) + 160,
    \]
    where $f(u)=M$ for every $u\in M \in \mathcal{M}$.
\end{lemma}

\begin{proof}
    Similarly to in the proof of \cref{Lem:metrictostring}, for each $u\in V(H)$, we let $S_u$ be the set of vertices of $H$ at distance at most $\frac{3}{2}$ from $u$.
    Let $\mathcal{S}= \{S_u : u\in V(H)\}$.
    Clearly $\mathcal{S}$ is a spanning collection of connected vertex sets of $H$.
    
    Consider some $u,v\in V(H)$ and let $P$ be a geodesic path from $u$ to $v$ in $H$.
    Choose vertices $x_0,\ldots ,x_t$ of $H$ along $P$ so that $x_0=u$, $x_t=v$ and for each $0\le i <t$, the distance along $P$ between $x_i$ and $x_{i+1}$ is in $[1,2]$ (or if $d_H(u,v) < 1$, we just take $x_0=u$, $x_1=v$).
    So, $\lceil \frac{1}{2}d_H(u,v) \rceil \le  t\le \lfloor d_H(u,v) \rfloor$.
    For each $0\le i <t$, we have that $S_{x_i}$ intersects $S_{x_{i+1}}$ since $H$ is a $(0,1]$-metric graph and $d_H(x_i, x_{i+1}) + 1 \le 3 = 2 \cdot \frac{3}{2}$.
    It therefore follows that for every pair $u,v\in V(H)$, we have that
    \[
    \frac{1}{3}d_H(u,v)
    \le
    d_{\textbf{RIG}(H,\mathcal{S})}(S_u,S_v)
    \le
    d_H(u,v) + 1.
    \]

    By \cref{thm:stringimpression}, there is a $(73936,80)$-impression $(H, \mathcal{M})$ of $(H,\mathcal{S})$.
    By \cref{prop:impression}, it then follows that for every $u,v\in V(H)$, we have that
    \[
    \frac{1}{73944} d_{\textbf{RIG}(H,\mathcal{S})}(S_u,S_v) - \frac{73936}{73944}  
    \le 
    d_{\textbf{IM}(H,\mathcal{M})}(f(u),f(v))
    \le 
    160 d_{\textbf{RIG}(H,\mathcal{S})}(S_u,S_v). 
    \]

    Therefore, for every $u,v\in V(H)$, we have that
    \[
    \frac{1}{225000}d_H(u,v) - 1
    \le
    d_{\textbf{IM}(H,\mathcal{M})}(f(u),f(v))
    \le
    160d_H(u,v) + 160,
    \]
    as desired.
\end{proof}

Next we extend \cref{lem:metricinternaltriangulate} to triangulations of higher genus surfaces.
A graph triangulates a surface without boundary if it has an embedding in which every face is a non-degenerate triangle (meaning it is bounded by three distinct vertices and edges) and is homeomorphic to a disk.
In extending \cref{lem:metricinternaltriangulate}, we just get a some extra additive distortion which is linear in the Euler genus of the surface.

\begin{lemma}\label{lem:metricEulerfinite}
    Let $H$ be a finite $(0,1]$-metric graph that triangulates a surface $\Sigma$ without boundary and with Euler genus $g$.
    Then there exists a partition of $V(H)$ into connected vertex sets $\mathcal{M}$ such that for every $u,v\in V(H)$, we have that 
    \[
    \frac{1}{225000}d_H(u,v) - 1
    \le
    d_{\textbf{IM}(H,\mathcal{M})}(f(u),f(v))
    \le
    160d_H(u,v) + 160 +1280g,
    \]
    where $f(u)=M$ for every $u\in M \in \mathcal{M}$.
\end{lemma}

\begin{proof}
    Choose a vertex $r$ of $H$ and construct a spanning tree $T$ of $H$ with root $r$ by ordering the vertices of $H$ according to their distance from $r$ and adding them and some incident edge greedily to the tree so that for each $x\in V(H)$, we have that $d_T(r,x)=d_H(r,x)$.
    
    In the dual of the embedding of $H'$ in $\Sigma$ (defined as the graph $H^*$ with a vertex in each face, the same edge set as $H$ and with each edge $e$ in $H^*$ being incident to the two vertices corresponding to the two faces that $e$ is incident to in $H$, (possibly allowing loops or multiple edges)), choose some spanning tree $T^*$ with $E(T)\cap E(T^*) = \emptyset$.
    Let $F$ be the edges of $H$ not in $T$ or $T^*$.
    By Euler's formula, we have that $|F|=g$.
    Let $T'$ be the minimal subtree of $T$ containing the ends of the edges of $F$.
    Note that $T'$ is the union of at most $2g$ geodesics of $H$.
    Now observe that $G=H - V(T')$ is a $(0,1[$-metric internally triangulated planar graph.

    By \cref{lem:metricinternaltriangulate}, there is a partition of $V(G)$ into connected vertex sets $\mathcal{X}$ such that  for every $u,v\in V(G)$, we have that 
    \[
    \frac{1}{225000}d_G(u,v) -1
    \le
    d_{\textbf{IM}(G,\mathcal{X})}(f_{\mathcal{X}}(u),f_{\mathcal{X}}(v))
    \le
    160d_G(u,v) + 160,
    \]
    where $f_\mathcal{X}(u)=X$ for all $u\in X \in \mathcal{X}$.

    Let $\mathcal{Y}_1=\{N_{T'}^{3}[r]\}$ and for each positive integer $i\ge 2$, let $\mathcal{Y}_i$ be the collection of maximal connected vertex sets in $T[N_{T'}^{3i}[r]\backslash N_{T'}^{3(i-1)}[r]]$.
    Let $\mathcal{Y} = \bigcup_{i=1}^\infty \mathcal{Y}_i$.
    Since $T$ is a $(0,1]$-metric tree, observe that for every $u,v\in V(T')$, we have that
    \[
    \frac{1}{6}d_{T'}(u,v) -1
    \le
    d_{\textbf{IM}({T'},\mathcal{Y})}(f_{\mathcal{Y}}(u),f_{\mathcal{Y}}(v))
    \le
    d_{T'}(u,v) + 2,
    \]
    where $f_\mathcal{Y}(u)=Y$ for all $u\in Y \in \mathcal{Y}$.

    Let $\mathcal{M} = \mathcal{X} \cup \mathcal{Y}$, and let $f(u)=M$ for every $u \in M \in \mathcal{M}$.
    Clearly $\mathcal{M}$ is a partition of $V(H)$ into connected vertex sets.
    Each $X\in \mathcal{X}$ has weak diameter at most 225000 in $G$ and therefore also in $H$.
    Similarly, each $Y\in \mathcal{Y}$ has weak diameter at most $6<225000$ in $T'$ and therefore also in $H$.
    So, each $M\in \mathcal{M}$ has weak diameter at most 225000 in $H$, it follows that for every $u,v\in V(G)$, we have that 
    $225000^{-1}d_H(u,v) - 1
    \le
    d_{\textbf{IM}(H,\mathcal{M})}(f(u),f(v))$.

    Consider some $u,v\in V(H)$.
    Since $T'$ is the union of at most $2g$ geodesic paths in $H$, we can find a path $P=P_1e_1P_2\cdots e_{n-1}P_n$ in $H$ between $u$ and $v$ such that
    \begin{itemize}
        \item $P$ has length $d_H(u,v)$,
        \item $n\le 4g+1$,
        \item for each $1\le i <n$, $e_i$ is an edge of $H$ not contained in $E(T')$ or $E(G)$, and
        \item for each $1\le i \le n$, $P_i$ is a geodesic of either $T'$ or $G$.
    \end{itemize}
    For each $1\le i \le n$, let $p_{2i-1}$ be the starting vertex of $P_i$, and let $p_{2i}$ be the end vertex of $P_i$ (these vertices need not be distinct).
    Clearly for each $1\le i < n$, we have $d_{\textbf{IM}(H',\mathcal{M})}(f(p_{2i}),f(p_{2i+1}))
    \le 1$.
    So, for each $1\le i < 2n$, we have that 
    $d_{\textbf{IM}(H',\mathcal{M})}(f(p_{2i-1}),f(p_{2i}))
    \le
    160d_H(p_{2i-1},p_{2i}) + 160$, since each $P_j$ is a geodesic in both $H$ and either $T'$ or $G$.
    Then,
    \begin{align*}
        d_{\textbf{IM}(H,\mathcal{M})}(f(u),f(v))
        & \le 
        \sum_{i=1}^{2i-1} d_{\textbf{IM}(H,\mathcal{M})}(f(p_i),f(p_{i+1}))
        \\
        & \le 
        \sum_{i=1}^{2i-1} \left( 160 d_H(p_i,p_{i+1}) + 160 \right)
        \\
        & = 
        160d_H(u,v)
        + 160(2i-1)
        \\
        & \le
        160 d_H(u,v)
        + 160(8g+1)
        \\
        & =
        160 d_H(u,v)
        + 160 + 1280g,
    \end{align*}
    as desired.
\end{proof}

For our applications to Riemannian surfaces, we further need the following version of \cref{lem:metricEulerfinite} for locally finite graphs, which is a standard extension.

\begin{lemma}\label{lem:metricEuler}
    Let $H$ be a locally finite $(0,1]$-metric graph that triangulates a surface $\Sigma$ without boundary and with Euler genus $g$.
    Then there exists a partition of $V(H)$ into connected vertex sets $\mathcal{M}$ such that for every $u,v\in V(H)$, we have that  
    \[
    \frac{1}{225000}d_H(u,v) - 1
    \le
    d_{\textbf{IM}(H,\mathcal{M})}(f(u),f(v))
    \le
    160d_H(u,v) + 160 +1280g,
    \]
    where $f(u)=M$ for every $u\in M \in \mathcal{M}$.
\end{lemma}

\begin{proof}
    Let $\mathcal{X}=\{X_k : k\ge 1\}$ be the collection of all finite subsets of $V(H)$.
    Fix a vertex $r$ of $H$.
    For each positive integer $i$, let $H_i$ be some finite $(0,1]$-metric graph that triangulates a surface $\Sigma'$ of Euler genus at most $g$, contains the vertices $N_H^{i}[r]$ and with $N_{H_i}^{i}[ r]=N_H^{i}[r]$ (note that $\Sigma'$ may need to be chosen to be some compactification of $\Sigma$).
    For each $i$, let $\mathcal{M}_i$ be a partition of $V(H_i)$ into connected vertex sets as given by \cref{lem:metricEulerfinite}.
    
    Let $J_0=\mathbb{N}$ and for each $k\ge 1$ in order, let $J_k\subseteq J_{k-1}$ be an infinite subset such that either $X_k\in \mathcal{M}_i$ for every $i\in J_k$, or $X_k \not\in \mathcal{M}_i$ for every $i\in J_k$.
    Let $\mathcal{M}$ be the subset of $\mathcal{X}$ consisting of all $X_k$ such that $X_k\in \mathcal{M}_i$ for every $i\in J_k$.
    
    For each pair of positive integers $i,t$ with $i\ge t+22500$ and $t\ge 22500$, let $\mathcal{Y}_{i,t}$ be the subset of $\mathcal{M}_i$ consisting of all $M\in \mathcal{M}_i$ intersecting $N_H^t[r]$.
    Note that for every positive integer $t$, there are only finitely many possibilities for such $\mathcal{Y}_{i,t}$ and also $N_H^{t-22500}[r] \subseteq  \bigcup_{Y\in \mathcal{Y}_{i,t}} Y$.
    So, it follows that $\mathcal{M}$ is a partition of $V(H)$ into connected sets.
    It is furthermore straightforward to check that $\mathcal{M}$ satisfies the conclusion of the lemma by using the fact that for every pair $i,t$ with $i\ge 10^6t$, we have for every $u,v\in N_H^t[r]$ that $d_{H}(u,v)=d_{H_i}(u,v)$ and $d_{\textbf{IM}(H,\mathcal{M}_i)}(f_i(u),f_i(v)) = d_{\textbf{IM}(H_i,\mathcal{M}_i)}(f_i(u),f_i(v))$, where $f_i(u)=M$ for every $u\in M\in \mathcal{M}_i$.
\end{proof}

With \cref{lem:metricEuler} we are almost ready to prove \cref{thm:RiemannianTriangulation}.
First we give some required preliminaries on Riemannian surfaces.

A \emph{Riemannian surface} is a surface $\Sigma$ together with a metric $d_\Sigma$ defined by a scalar product on the tangent space of every point.
A Riemannian surface $\Sigma$ is \emph{complete} if the metric space $(\Sigma, d_\Sigma)$ is complete.
Note that compact Riemannian surfaces are also complete.
By the Hopf–Rinow \cite{hopf1931ueber} theorem, in a complete Riemannian surface $\Sigma$, subsets that are bounded and closed are compact.
Restricting a complete Riemannian surface to a bounded compact subset homeomorphic to a disk gives a complete Riemannian surface homeomorphic to a disk.
For more on Riemannian surfaces, see \cite{spivak1979comprehensive}.

A \emph{cell decomposition} of a complete Riemannian surface $\Sigma$ (without boundary) can be described as a locally finite embedded graph $G \subset \Sigma$ such that every face $F$ is homeomorphic to the unit open ball $B \subset \mathbb{R}^2$ via a homeomorphism $B \to \Sigma$ that extends to a continuous map of the unit disc $D$ mapping its boundary into $G$.
This mapping defines a boundary path in $G$, and the boundary size of $F$ is the number of edges in its boundary path, counted with multiplicity.
If every face has boundary size exactly 3 and furthermore the boundary paths all consist of 3 distinct vertices and edges, then such a cell decomposition is a \emph{triangulation}.
A cell decomposition is \emph{piecewise smooth} if every edge in the embedding is piecewise smooth.

We need the following technical lemma of Georgakopoulos and Vigolo \cite[Lemma 2.8]{georgakopoulos2026triangulating} on modifying embedded graphs in Riemannian surfaces (actually this is a slightly less general version).

\begin{lemma}[Georgakopoulos and Vigolo \cite{georgakopoulos2026triangulating}]\label{lem:RieTree}
    Let $\Sigma$ be a complete Riemannian surface homeomorphic to a disk with boundary $\partial\Sigma$, and let $T \subset \Sigma$ be an embedded finite rooted tree consisting of piecewise smooth curves of finite length with root $o$ and leaves (distinct from $o$ if $o$ is a leaf) $\{p_1 , \ldots , p_n\}$ such that $T\cap \partial\Sigma = \{p_1 , \ldots , p_n\}$.
    
Then for every $\epsilon > 0$ we may find piecewise smooth curves $\gamma_i$ joining $p_i$ to $o$ such that:
\begin{itemize}
    \item different $\gamma_i$ only meet at $o$;
    \item $\bigcup_{i=1}^n \gamma_i$ is within Hausdorff distance $\epsilon$ from $T$;
    \item $\left(\bigcup_{i=1}^n \gamma_i\right)\cap \partial\Sigma = \{p_1 , \ldots , p_n\}$.
\end{itemize}
\end{lemma}

Now, we prove the following slight weakening of \cref{thm:RiemannianTriangulation}.
With \cref{lem:metricEuler} in hand, we roughly follow the idea of the proof of \cite[Theorem 4.1]{georgakopoulos2026triangulating}, applying it along with \cref{thm:triangulate} and \cref{lem:RieTree}.

\begin{theorem}\label{thm:RiemannianTriangulationB}
    Let $\Sigma$ be a complete Riemannian surface (without boundary) and with bounded Euler genus.
    Then there is a cell decomposition $G\subset \Sigma$ where every face has boundary of size at most three and $G^{(1)} \hookrightarrow \Sigma$ is a quasi-isometry.
\end{theorem}

\begin{proof}
    Let $H\subset \Sigma$ be a piecewise smooth triangulation where $H$ is a locally finite $(0,1]$-metric graph $H$ such that $H\hookrightarrow \Sigma$ is a quasi-isometry, as given by \cref{thm:triangulate}.
    Since $H$ triangulates a surface of bounded genus, by \cref{lem:metricEuler}, there exists some $X\subseteq V(H)$ and a partition $\mathcal{M}=\{M_x : x\in X\}$ of $V(H)$ into connected sets with $x\in M_x$ for every $x\in X$ and such that the map $f:V(H)\to \mathcal{M}$ with $f(y)=x$ for every $y\in M_x$ is a quasi-isometry from $H$ to $\textbf{IM}(H,\mathcal{M})$.
    For each $x\in X$, let $T_x$ be a spanning tree of $H[M_x]$ with root $x$.
    For some sufficiently small $\epsilon_x>0$ we can choose some $N_x\subseteq \Sigma$ with $N_x=\{p : d_\Sigma(T_x,p) \le \epsilon_x\}$ such that $N_x\cap H$ is a tree $T_x^*$ (whose leaves were not vertices of $H$) and $N_x$ is homeomorphic to a disk.
    Restricting $\Sigma$ to $N_x$ yields a complete Riemannian surface homeomorphic to the disk.
    The leaves of $T_x^*$ are exactly the points of $T_x^*$ intersecting $\partial N_x$.
    Applying \cref{lem:RieTree} to every $T_x^*\subseteq N_x$ yields a cell decomposition $G\subset \Sigma$ where every face has boundary size at most three.
    For an illustration of this, see \cref{fig:rieman}.
    Since the graph $G$ (or essentially its simplicial 1-skeleton $G^{(1)}$) is obtained from $\textbf{IM}(H,\mathcal{M})$ by possibly adding loops and multiple edges, it can be seen from the quasi-isometry $f$ that $G^{(1)} \hookrightarrow \Sigma$ is also a quasi-isometry.
\end{proof}

\begin{figure}
    \centering
    \includegraphics[width=0.9\linewidth]{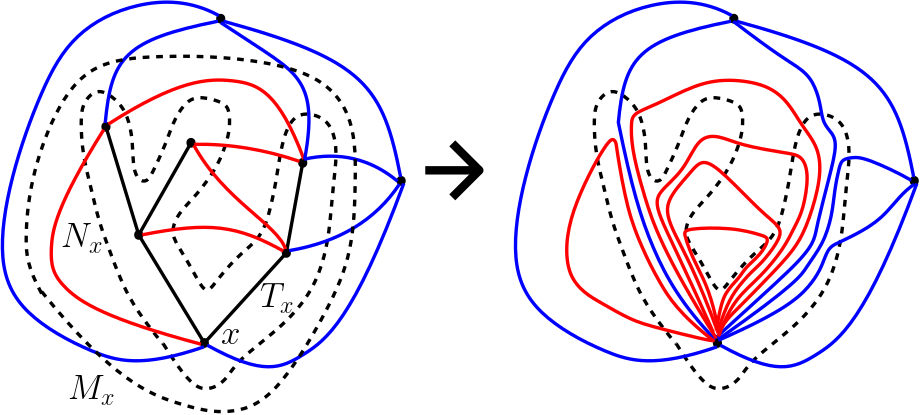}
    \caption{On the left is an illustration of the triangulation $H\subset \Sigma$. The induced subgraph $H[M_x]$ is the one consisting of the (black) rooted tree $T_x$ and the other (red) edges between vertices of $M_x$. The rest of $H$ is blue.
    For each such $x$, we apply \cref{lem:RieTree} to modify the cell decomposition as on the right.
    By doing this for every $x\in X$, we obtain the desired cell decomposition $G\subset \Sigma$.}
    \label{fig:rieman}
\end{figure}

Georgakopoulos and Vigolo \cite[Lemma 2.7]{georgakopoulos2026triangulating} showed that by taking some subgraph $G_1$ of $G$, such a cell decomposition can be refined so that every face has boundary of size exactly 3, while preserving the quasi-isometry $G_1^{(1)} \hookrightarrow \Sigma$.
They also further observed \cite[Remark 2.6]{georgakopoulos2026triangulating} that by taking the barycentric
subdivision of $G_1\subset \Sigma$, such a cell decomposition can be turned into a triangulation $G_2 \subset \Sigma$, again while preserving the quasi-isometry $G_2^{(1)} \hookrightarrow \Sigma$.
Thus, we obtain \cref{thm:RiemannianTriangulation} from \cref{thm:RiemannianTriangulationB}.

It can easily be checked that the proof of \cref{thm:RiemannianTriangulation} yields a $(M,A+O(g))$-quasi-isometry where $M$ and $A$ are absolute constants independent of the Euler genus.
In fact, a close inspection shows that it yields a $(10^6, 10^6+10^4g)$-quasi-isometry.

\section*{Acknowledgements}

This work was initiated at the Workshop on Graph Product Structure Theory (BIRS21w5235) at the Banff International Research Station, 21–26 November
2021.
We are grateful to the organisers and
participants for providing a stimulating research environment.
We thank a number of people for helpful discussions and encouragement on the project including
Sandra Albrechtsen,
Marthe Bonamy,
Linda Cook,
Louis Esperet,
Agelos Georgakopoulos, 
Robert Hickingbotham,
Rose McCarty,
Harry Petyt,
Federico Vigolo, and
Alexandra Wesolek.

\bibliographystyle{amsplain}
{\small
\bibliography{QI}}

@article{bonamy2023asymptotic,
  title={Asymptotic dimension of minor-closed families and Assouad--Nagata dimension of surfaces},
  author={Bonamy, Marthe and Bousquet, Nicolas and Esperet, Louis and Groenland, Carla and Liu, Chun-Hung and Pirot, Fran{\c{c}}ois and Scott, Alexander},
  journal={Journal of the European Mathematical Society},
  volume={26},
  number={10},
  pages={3739--3791},
  year={2023}
}

@article{georgakopoulos2023graph,
  title={Graph minors and metric spaces},
  author={Georgakopoulos, Agelos and Papasoglu, Panos},
  journal={arXiv preprint arXiv:2305.07456},
  year={2023}
}

@article{distel2023proper,
  title={Proper minor-closed classes of graphs have {A}ssouad-{N}agata dimension 2},
  author={Distel, Marc},
  journal={arXiv preprint arXiv:2308.10377},
  year={2023}
}

@article{liu2023assouad,
  title={Assouad--{N}agata dimension of minor-closed metrics},
  author={Liu, Chun-Hung},
  journal={Proceedings of the London Mathematical Society},
  volume={130},
  number={3},
  pages={e70032},
  year={2025},
  publisher={Wiley Online Library}
}

@article{fujiwara2023coarse,
  title={A coarse-geometry characterization of cacti},
  author={Fujiwara, Koji and Papasoglu, Panos},
  journal={arXiv preprint arXiv:2305.08512},
  year={2023}
}

@article{fujiwara2021asymptotic,
  title={Asymptotic dimension of planes and planar graphs},
  author={Fujiwara, Koji and Papasoglu, Panos},
  journal={Transactions of the American Mathematical Society},
  volume={374},
  number={12},
  pages={8887--8901},
  year={2021}
}

@article{davies2024fat,
  title={Fat minors cannot be thinned (by quasi-isometries)},
  author={Davies, James and Hickingbotham, Robert and Illingworth, Freddie and McCarty, Rose},
  journal={arXiv preprint arXiv:2405.09383},
  year={2024}
}

@article{davies2025coarse,
  title={Coarse structure for minor-free metrics},
  author={Davies, James and Distel, Marc and Hickingbotham, Robert},
  journal={(In preparation)}
}

@article{davies2025strong,
  title={Strongly sublinear separators and bounded asymptotic dimension for sphere intersection graphs},
  author={Davies, James and Georgakopoulos, Agelos and Hatzel, Meike and McCarty, Rose},
  journal={arXiv preprint arXiv:2504.00932},
  year={2025}
}

@article{macmanus2024note,
  title={A note on quasi-transitive graphs quasi-isometric to planar ({C}ayley) graphs},
  author={MacManus, Joseph},
  journal={arXiv preprint arXiv:2407.13375},
  year={2024}
}

@article{nguyen2025counterexample,
  title={A counterexample to the coarse Menger conjecture},
  author={Nguyen, Tung and Scott, Alex and Seymour, Paul},
  journal={Journal of Combinatorial Theory, Series B},
  volume={173},
  pages={68--82},
  year={2025},
  publisher={Elsevier}
}

@article{albrechtsen2024menger,
  title={A Menger-type theorem for two induced paths},
  author={Albrechtsen, Sandra and Huynh, Tony and Jacobs, Raphael W. and Knappe, Paul and Wollan, Paul},
  journal={SIAM Journal on Discrete Mathematics},
  volume={38},
  number={2},
  pages={1438--1450},
  year={2024},
  publisher={SIAM}
}

@article{macmanus2023accessibility,
  title={Accessibility, planar graphs, and quasi-isometries},
  author={MacManus, Joseph},
  journal={arXiv preprint arXiv:2310.15242},
  year={2023}
}

@article{albrechtsen2024characterisation,
  title={A characterisation of graphs quasi-isometric to ${K_4}$-minor-free graphs},
  author={Albrechtsen, Sandra and Jacobs, Raphael W. and Knappe, Paul and Wollan, Paul},
  journal={arXiv preprint arXiv:2408.15335},
  year={2024}
}

@article{fox2012string,
  title={String graphs and incomparability graphs},
  author={Fox, Jacob and Pach, J{\'a}nos},
  journal={Advances in mathematics},
  volume={230},
  number={3},
  pages={1381--1401},
  year={2012},
  publisher={Academic Press USA}
}

@article{pach2002recognizing,
  title={Recognizing string graphs is decidable},
  author={Pach, J{\'a}nos and T{\'o}th, G{\'e}za},
  journal={Discrete \& Computational Geometry},
  volume={28},
  pages={593--606},
  year={2002},
  publisher={Springer}
}

@article{fox2010separator,
  title={A separator theorem for string graphs and its applications},
  author={Fox, Jacob and Pach, J{\'a}nos},
  journal={Combinatorics, Probability and Computing},
  volume={19},
  number={3},
  pages={371--390},
  year={2010},
  publisher={Cambridge University Press}
}

@article{fox2014applications,
  title={Applications of a new separator theorem for string graphs},
  author={Fox, Jacob and Pach, J{\'a}nos},
  journal={Combinatorics, Probability and Computing},
  volume={23},
  number={1},
  pages={66--74},
  year={2014},
  publisher={Cambridge University Press}
}

@article{fox2008separator,
  title={Separator theorems and {T}ur{\'a}n-type results for planar intersection graphs},
  author={Fox, Jacob and Pach, J{\'a}nos},
  journal={Advances in Mathematics},
  volume={219},
  number={3},
  pages={1070--1080},
  year={2008},
  publisher={Elsevier}
}

@article{lee2016separators,
  title={Separators in region intersection graphs},
  author={Lee, James R},
  journal={arXiv preprint arXiv:1608.01612},
  year={2016}
}

@article{tomon2023string,
  title={String graphs have the {E}rd{\H{o}}s--{H}ajnal property},
  author={Tomon, Istv{\'a}n},
  journal={Journal of the European Mathematical Society},
  volume={26},
  number={1},
  pages={275--287},
  year={2023}
}

@article{spivak1979comprehensive,
  title={A comprehensive introduction to differential geometry, Publish or Perish},
  author={Spivak, Michael},
  journal={Inc., Berkeley},
  volume={2},
  year={1979}
}

@article{albrechtsen2025counterexample,
  title={Counterexample to the conjectured coarse grid theorem},
  author={Albrechtsen, Sandra and Davies, James},
  journal={arXiv preprint arXiv:2508.15342},
  year={2025}
}

@article{chepoi2012constant,
  title={Constant approximation algorithms for embedding graph metrics into trees and outerplanar graphs},
  author={Chepoi, Victor and Dragan, Feodor F and Newman, Ilan and Rabinovich, Yuri and Vaxes, Yann},
  journal={Discrete \& Computational Geometry},
  volume={47},
  number={1},
  pages={187--214},
  year={2012},
  publisher={Springer}
}

@article{matouvsek2014near,
  title={Near-optimal separators in string graphs},
  author={Matou{\v{s}}ek, Ji{\v{r}}{\'\i}},
  journal={Combinatorics, Probability and Computing},
  volume={23},
  number={1},
  pages={135--139},
  year={2014},
  publisher={Cambridge University Press}
}

@article{ABRISHAMI2025string,
  title={Burling graphs in graphs with large chromatic number},
  author={Abrishami, Tara and Bria{\'n}ski, Marcin and Davies, James and Du, Xiying and Masa{\v{r}}{\'\i}kov{\'a}, Jana and Rz{\k{a}}{\.z}ewski, Pawe{\l} and Walczak, Bartosz},
  journal={arXiv preprint arXiv:2510.19650},
  year={2025}
}

@article{benzer1959topology,
  title={On the topology of the genetic fine structure},
  author={Benzer, Seymour},
  journal={Proceedings of the National Academy of Sciences},
  volume={45},
  number={11},
  pages={1607--1620},
  year={1959}
}

@article{sinden1966topology,
  title={Topology of thin film {R}{C} circuits},
  author={Sinden, Frank W},
  journal={Bell System Technical Journal},
  volume={45},
  number={9},
  pages={1639--1662},
  year={1966},
  publisher={Wiley Online Library}
}

@article{ehrlich1976intersection,
  title={Intersection graphs of curves in the plane},
  author={Ehrlich, Gideon and Even, Shimon and Tarjan, Robert Endre},
  journal={Journal of Combinatorial Theory, Series B},
  volume={21},
  number={1},
  pages={8--20},
  year={1976},
  publisher={Elsevier}
}

@article{kratochvil1991string,
  title={String graphs. {I}{I}. {R}ecognizing string graphs is {N}{P}-hard},
  author={Kratochv{\'\i}l, Jan},
  journal={Journal of Combinatorial Theory, Series B},
  volume={52},
  number={1},
  pages={67--78},
  year={1991},
  publisher={Academic Press}
}

@article{kratochvil1991stringexp,
  title={String graphs requiring exponential representations},
  author={Kratochv{\'\i}l, Jan and Matou{\v{s}}ek, Ji{\v{r}}{\'\i}},
  journal={Journal of Combinatorial Theory, Series B},
  volume={53},
  number={1},
  pages={1--4},
  year={1991},
  publisher={Elsevier}
}

@article{hopcroft1974efficient,
  title={Efficient planarity testing},
  author={Hopcroft, John and Tarjan, Robert},
  journal={Journal of the ACM (JACM)},
  volume={21},
  number={4},
  pages={549--568},
  year={1974},
  publisher={ACM New York, NY, USA}
}

@inproceedings{schaefer2002recognizing,
  title={Recognizing string graphs in {N}{P}},
  author={Schaefer, Marcus and Sedgwick, Eric and {\v{S}}tefankovi{\v{c}}, Daniel},
  booktitle={Proceedings of the thiry-fourth annual ACM symposium on Theory of computing},
  pages={1--6},
  year={2002}
}

@article{pawlik2014triangle,
  title={Triangle-free intersection graphs of line segments with large chromatic number},
  author={Pawlik, Arkadiusz and Kozik, Jakub and Krawczyk, Tomasz and Laso{\'n}, Micha{\l} and Micek, Piotr and Trotter, William T and Walczak, Bartosz},
  journal={Journal of Combinatorial Theory, Series B},
  volume={105},
  pages={6--10},
  year={2014},
  publisher={Elsevier}
}

@article{robertson1997four,
  title={The four-colour theorem},
  author={Robertson, Neil and Sanders, Daniel and Seymour, Paul and Thomas, Robin},
  journal={Journal of combinatorial theory, Series B},
  volume={70},
  number={1},
  pages={2--44},
  year={1997},
  publisher={Elsevier}
}

@article{appel1977every,
  title={Every planar map is four colorable. {P}art {I}{I}: {R}educibility},
  author={Appel, Kenneth and Haken, Wolfgang and Koch, John},
  journal={Illinois Journal of Mathematics},
  volume={21},
  number={3},
  pages={491--567},
  year={1977},
  publisher={Duke University Press}
}

@article{pach2006many,
  title={How many ways can one draw a graph?},
  author={Pach, J{\'a}nos and T{\'o}th, G{\'e}za},
  journal={Combinatorica},
  volume={26},
  number={5},
  pages={559--576},
  year={2006},
  publisher={Springer}
}

@article{konig1927schlussweise,
  title={{\"U}ber eine Schlussweise aus dem Endlichen ins Unendliche},
  author={K{\"o}nig, D{\'e}nes},
  journal={Acta Sci. Math.(Szeged)},
  volume={3},
  number={2-3},
  pages={121--130},
  year={1927}
}

@article{wagner1967fastplattbare,
  title={Fastpl{\"a}ttbare graphen},
  author={Wagner, Klaus},
  journal={Journal of Combinatorial Theory},
  volume={3},
  number={4},
  pages={326--365},
  year={1967},
  publisher={Elsevier}
}

@article{denise1996random,
  title={The random planar graph},
  author={Denise, Alain and Vasconcellos, Marcio and Welsh, Dominic JA},
  journal={Congressus numerantium},
  pages={61--80},
  year={1996},
  publisher={UTILITAS MATHEMATICA PUBLISHING INC}
}

@article{norine2006proper,
  title={Proper minor-closed families are small},
  author={Norine, Serguei and Seymour, Paul and Thomas, Robin and Wollan, Paul},
  journal={Journal of Combinatorial Theory, Series B},
  volume={96},
  number={5},
  pages={754--757},
  year={2006},
  publisher={Elsevier}
}

@article{manning2005geometry,
  title={Geometry of pseudocharacters},
  author={Manning, Jason Fox},
  journal={Geometry \& Topology},
  volume={9},
  number={2},
  pages={1147--1185},
  year={2005},
  publisher={Mathematical Sciences Publishers}
}

@article{albrechtsen2025fatK2n,
  title={Excluding ${K_{2,t}}$ as a fat minor},
  author={Albrechtsen, Sandra and Distel, Marc and Georgakopoulos, Agelos},
  journal={arXiv preprint arXiv:2510.14644},
  year={2025}
}

@article{saucan2008intrinsic,
  title={Intrinsic differential geometry and the existence of quasimeromorphic mappings},
  author={Saucan, Emil},
  journal={arXiv preprint arXiv:0901.0125},
  year={2008}
}

@article{davies2025riemannian,
  title={Surfaces without quasi-isometric
simplicial triangulations},
  author={Davies, James},
  journal={arXiv preprint arXiv:2603.26652},
  year={2026}
}

@article{ostrovskii2015metric,
  title={Metric dimensions of minor excluded graphs and minor exclusion in groups},
  author={Ostrovskii, Mikhail I and Rosenthal, David},
  journal={International Journal of Algebra and Computation},
  volume={25},
  number={04},
  pages={541--554},
  year={2015},
  publisher={World Scientific}
}

@article{jorgensen2022geodesic,
  title={Geodesic spaces of low {N}agata dimension},
  author={J{\o}rgensen, Martina and Lang, Urs},
  journal={Annales Fennici Mathematici},
  volume={47},
  pages={83--88},
  year={2022}
}

@article{nguyen2025asymptotic,
  title={Asymptotic structure. {I}{I}{I}. {E}xcluding a fat tree},
  author={Nguyen, Tung and Scott, Alex and Seymour, Paul},
  journal={arXiv preprint arXiv:2509.09035},
  year={2025}
}

@article{maillot2001quasi,
  title={Quasi-isometries of groups, graphs and surfaces},
  author={Maillot, Sylvain},
  journal={Commentarii Mathematici Helvetici},
  volume={76},
  number={1},
  pages={29--60},
  year={2001},
  publisher={Springer}
}

@Book{GroAsyInv,
 Author = {Gromov, Mikhael},
 Title = {Geometric group theory. {Volume} 2: {Asymptotic} invariants of infinite groups. {Proceedings} of the symposium held at the {Sussex} {University}, {Brighton}, {July} 14-19, 1991},
 FSeries = {London Mathematical Society Lecture Note Series},
 Series = {Lond. Math. Soc. Lect. Note Ser.},
 ISSN = {0076-0552},
 Volume = {182},
 ISBN = {0-521-44680-5},
 Year = {1993},
 Publisher = {Cambridge: Cambridge University Press},
 Language = {English},
 Keywords = {20F65,20F05,20-02,57M07},
 zbMATH = {437296},
 Zbl = {0841.20039}
}

@article{dunwoody2007planar,
  title={Planar graphs and covers},
  author={Dunwoody, Martin J},
  journal={arXiv preprint arXiv:0708.0920},
  year={2007}
}

@article{hamann2018accessibility,
  title={Accessibility in transitive graphs},
  author={Hamann, Matthias},
  journal={Combinatorica},
  volume={38},
  number={4},
  pages={847--859},
  year={2018},
  publisher={Springer}
}

@article{hamann2018planar,
  title={Planar Transitive Graphs},
  author={Hamann, Matthias},
  journal={The Electronic Journal of Combinatorics},
  volume={25},
  number={4},
  pages={4--8},
  year={2018}
}

@article{esperet2024structure,
  title={The structure of quasi-transitive graphs avoiding a minor with applications to the domino problem},
  author={Esperet, Louis and Giocanti, Ugo and Legrand-Duchesne, Cl{\'e}ment},
  journal={Journal of Combinatorial Theory, Series B},
  volume={169},
  pages={561--613},
  year={2024},
  publisher={Elsevier}
}

@article{mess1988seifert,
  title={The {S}eifert conjecture and groups which are coarse quasiisometric to planes},
  author={Mess, Geoffrey},
  journal={preprint},
  year={1988}
}

@article{tukia1988homeomorphic,
  title={Homeomorphic conjugates of {F}uchsian groups},
  author={Tukia, Pekka},
  journal={Journal f{\"u}r die reine und angewandte Mathematik},
  volume={391},
  pages={1--54},
  year={1988}
}

@article{gabai1992convergence,
  title={Convergence groups are {F}uchsian groups},
  author={Gabai, David},
  journal={Annals of Mathematics},
  volume={136},
  number={3},
  pages={447--510},
  year={1992},
  publisher={JSTOR}
}

@article{casson1994convergence,
  title={Convergence groups and {S}eifert fibered 3-manifolds},
  author={Casson, Andrew and Jungreis, Douglas},
  journal={Inventiones mathematicae},
  volume={118},
  number={1},
  pages={441--456},
  year={1994},
  publisher={Springer}
}

@article{bowditch2004planar,
  title={Planar groups and the {S}eifert conjecture},
  author={Bowditch, Brian H},
  journal={Journal f{\"u}r die reine und angewandte Mathematik},
  volume={576},
  pages={11--62},
  year={2004}
}

@article{maschke1896representation,
  title={The representation of finite groups, especially of the rotation groups of the regular bodies of three-and four-dimensional space, by {C}ayley's color diagrams},
  author={Maschke, Heinrich},
  journal={American Journal of Mathematics},
  volume={18},
  number={2},
  pages={156--194},
  year={1896},
  publisher={JSTOR}
}

@article{georgakopoulos2020planar,
  title={On planar {C}ayley graphs and {K}leinian groups},
  author={Georgakopoulos, Agelos},
  journal={Transactions of the American Mathematical Society},
  volume={373},
  number={7},
  pages={4649--4684},
  year={2020}
}

@article{georgakopoulos2019planar,
  title={The planar {C}ayley graphs are effectively enumerable {I}: consistently planar graphs},
  author={Georgakopoulos, Agelos and Hamann, Matthias},
  journal={Combinatorica},
  volume={39},
  number={5},
  pages={993--1019},
  year={2019},
  publisher={Springer}
}

@article{georgakopoulos2019planar2,
  title={The planar {C}ayley graphs are effectively enumerable {I}{I}},
  author={Georgakopoulos, Agelos and Hamann, Matthias},
  journal={arXiv preprint arXiv:1901.00347},
  year={2019}
}

@article{droms2006infinite,
  title={Infinite-ended groups with planar {C}ayley graphs.},
  author={Droms, Carl and Robinson, DJS},
  journal={Journal of Group Theory},
  volume={9},
  number={4},
  year={2006}
}

@article{arzhantseva2004cayley,
  title={On the {C}ayley graph of a generic finitely presented group},
  author={Arzhantseva, Goulnara N and Cherix, P-A},
  journal={Bulletin of the Belgian Mathematical Society-Simon Stevin},
  volume={11},
  number={4},
  pages={589--601},
  year={2004},
  publisher={The Belgian Mathematical Society}
}

@article{droms1998connectivity,
  title={Connectivity and planarity of {C}ayley graphs},
  author={Droms, Carl and Servatius, Brigitte and Servatius, Herman},
  journal={Beitr{\"a}ge Algebra Geom},
  volume={39},
  number={2},
  pages={269--282},
  year={1998}
}

@article{georgakopoulos2017planar,
  title={The planar cubic {C}ayley graphs},
  author={Georgakopoulos, Agelos},
  journal={Memoirs of the American Mathematical Society},
  volume={250},
  number={1190},
  year={2017},
  publisher={American Mathematical Society}
}

@article{tucker1983finite,
  title={Finite groups acting on surfaces and the genus of a group},
  author={Tucker, Thomas W},
  journal={Journal of Combinatorial Theory, Series B},
  volume={34},
  number={1},
  pages={82--98},
  year={1983},
  publisher={Academic Press}
}

@article{yu1998novikov,
  title={The {N}ovikov conjecture for groups with finite asymptotic dimension},
  author={Yu, Guoliang},
  journal={Annals of Mathematics},
  volume={147},
  number={2},
  pages={325--355},
  year={1998},
  publisher={JSTOR}
}

@article{bernshteyn2025large,
  title={Large-scale geometry of Borel graphs of polynomial growth},
  author={Bernshteyn, Anton and Yu, Jing},
  journal={Advances in Mathematics},
  volume={473},
  pages={110290},
  year={2025},
  publisher={Elsevier}
}

@article{dvovrak2025asymptotic,
  title={Asymptotic dimension of intersection graphs},
  author={Dvo{\v{r}}{\'a}k, Zden{\v{e}}k and Norin, Sergey},
  journal={European Journal of Combinatorics},
  volume={123},
  pages={103631},
  year={2025},
  publisher={Elsevier}
}

@article{gupta2004cuts,
  title={Cuts, trees and $\ell_1$-embeddings of graphs},
  author={Gupta, Anupam and Newman, Ilan and Rabinovich, Yuri and Sinclair, Alistair},
  journal={Combinatorica},
  volume={24},
  number={2},
  pages={233--269},
  year={2004},
  publisher={Springer}
}

@inproceedings{rao1999small,
  title={Small distortion and volume preserving embeddings for planar and Euclidean metrics},
  author={Rao, Satish},
  booktitle={Proceedings of the fifteenth annual symposium on Computational geometry},
  pages={300--306},
  year={1999}
}

@article{bell2008asymptotic,
  title={Asymptotic dimension},
  author={Bell, Greg and Dranishnikov, Alexander},
  journal={Topology and its Applications},
  volume={155},
  number={12},
  pages={1265--1296},
  year={2008},
  publisher={Elsevier}
}

@article{Koebe36,
  author={Koebe, Paul},
  title={Kontaktprobleme der konformen {A}bbildung},
  journal={Berichte \"uber die Verhandlungen der S\"achsischen Akademie der Wissenschaften zu Leipzig, Mathematisch-Physische Klasse},
  volume={88},
  pages={141--164},
  year={1936},
}

@inproceedings{lokshtanov20241,
  title={A 1.9999-approximation algorithm for vertex cover on string graphs},
  author={Lokshtanov, Daniel and Panolan, Fahad and Saurabh, Saket and Xue, Jie and Zehavi, Meirav},
  booktitle={40th International Symposium on Computational Geometry (SoCG 2024)},
  volume={293},
  pages={72},
  year={2024},
  organization={Dagstuhl Publishing}
}

@inproceedings{jia2005universal,
  title={Universal approximations for {T}{S}{P}, {S}teiner tree, and set cover},
  author={Jia, Lujun and Lin, Guolong and Noubir, Guevara and Rajaraman, Rajmohan and Sundaram, Ravi},
  booktitle={Proceedings of the thirty-seventh annual ACM symposium on Theory of computing},
  pages={386--395},
  year={2005}
}

@article{filtser2024scattering,
  title={Scattering and sparse partitions, and their applications},
  author={Filtser, Arnold},
  journal={ACM Transactions on Algorithms},
  volume={20},
  number={4},
  pages={1--42},
  year={2024},
  publisher={ACM New York, NY}
}

@inproceedings{awerbuch1990sparse,
  title={Sparse partitions},
  author={Awerbuch, Baruch and Peleg, David},
  booktitle={Proceedings [1990] 31st Annual Symposium on Foundations of Computer Science},
  pages={503--513},
  year={1990},
  organization={IEEE}
}

@inproceedings{bonamy2025local,
  title={Local Constant Approximation for Dominating Set on Graphs Excluding Large Minors},
  author={Bonamy, Marthe and Gavoille, Cyril and Picavet, Timoth{\'e} and Wesolek, Alexandra},
  booktitle={Proceedings of the ACM Symposium on Principles of Distributed Computing},
  pages={77--87},
  year={2025}
}

@article{bonamy2025distributed,
  title={Distributed Approximation Algorithms for Minimum Dominating Set in Locally Nice Graphs},
  author={Bonamy, Marthe and Gavoille, Cyril and Picavet, Timoth{\'e} and Wesolek, Alexandra},
  journal={arXiv preprint arXiv:2507.04960},
  year={2025}
}

@article{hopf1931ueber,
  title={Ueber den Begriff der vollst{\"a}ndigen differentialgeometrischen Fl{\"a}che},
  author={Hopf, Heinz and Rinow, Willi},
  journal={Commentarii Mathematici Helvetici},
  volume={3},
  number={1},
  pages={209--225},
  year={1931},
  publisher={Springer}
}

@article{menger1927allgemeinen,
  title={Zur allgemeinen kurventheorie},
  author={Menger, Karl},
  journal={Fundamenta mathematicae},
  volume={10},
  number={1},
  pages={96--115},
  year={1927},
  publisher={Polska Akademia Nauk. Instytut Matematyczny PAN}
}

@article{nguyen2025asymptotic4,
  title={Asymptotic structure. IV. A counterexample to the weak coarse Menger conjecture},
  author={Nguyen, Tung and Scott, Alex and Seymour, Paul},
  journal={arXiv preprint arXiv:2508.14332},
  year={2025}
}

@article{nguyen2025asymptotic5,
  title={Asymptotic structure. V. The coarse Menger conjecture in bounded path-width},
  author={Nguyen, Tung and Scott, Alex and Seymour, Paul},
  journal={arXiv preprint arXiv:2509.08762},
  year={2025}
}

@article{nguyen2025asymptotic6,
  title={Asymptotic structure. VI. Distant paths across a disc},
  author={Nguyen, Tung and Scott, Alex and Seymour, Paul},
  journal={arXiv preprint arXiv:2509.07174},
  year={2025}
}

@article{erschler2023assouad,
  title={Assouad--Nagata dimension and gap for ordered metric spaces},
  author={Erschler, Anna and Mitrofanov, Ivan},
  journal={Commentarii Mathematici Helvetici},
  volume={98},
  number={2},
  pages={217--260},
  year={2023}
}

@article{georgakopoulos2026triangulating,
  title={Tirangulating surfaces quasi-isometrically},
  author={Georgakopoulos, Agelos and Vigolo, Federico},
  journal={arXiv preprint arXiv:2603.21189},
  year={2026}
}

@article{ntalampekos2023polyhedral,
  title={Polyhedral approximation of metric surfaces and applications to uniformization},
  author={Ntalampekos, Dimitrios and Romney, Matthew},
  journal={Duke Mathematical Journal},
  volume={172},
  number={9},
  pages={1673--1734},
  year={2023},
  publisher={Duke University Press}
}

@article{creutz2022triangulating,
  title={Triangulating metric surfaces},
  author={Creutz, Paul and Romney, Matthew},
  journal={Proceedings of the London Mathematical Society},
  volume={125},
  number={6},
  pages={1426--1451},
  year={2022},
  publisher={Wiley Online Library}
}

@article{bowditch2020bilipschitz,
  title={Bilipschitz triangulations of riemannian manifolds},
  author={Bowditch, Brian H},
  year={2020}
}

@article{bonk2002quasisymmetric,
  title={Quasisymmetric parametrizations of two-dimensional metric spheres},
  author={Bonk, Mario and Kleiner, Bruce},
  journal={Inventiones mathematicae},
  volume={150},
  number={1},
  pages={127--183},
  year={2002},
  publisher={Springer Science and Business Media LLC}
}

@book{burago2001course,
  title={A course in metric geometry},
  author={Burago, Dmitri and Burago, Yuri and Ivanov, Sergei},
  volume={33},
  year={2001},
  publisher={American Mathematical Soc.}
}

@article{chang20251,
  title={Cutting Planarians: Planar Emulators for String Graphs},
  author={Chang, Hsien-Chih and Conroy, Jonathan and Tan, Zihan and Zheng, Da Wei},
  journal={Accepted to STOC 2026, 58th Annual ACM Symposium on Theory of Computing},
  year={2026}
}

@inproceedings{chang2023covering,
  title={Covering planar metrics (and beyond): {O} (1) trees suffice},
  author={Chang, Hsien-Chih and Conroy, Jonathan and Le, Hung and Milenkovic, Lazar and Solomon, Shay and Than, Cuong},
  booktitle={2023 IEEE 64th Annual Symposium on Foundations of Computer Science (FOCS)},
  pages={2231--2261},
  year={2023},
  organization={IEEE}
}

\end{document}